\let\pa=\partial
\let\p=\partial
\let\ve=\varepsilon
\let\e=\varepsilon
\let\al=\alpha
\let\f=\frac
\let\D=\Delta
\let\la=\lambda
\let\wt=\widetilde
\def\no{\noindent}
\def\na{\nabla}
\def\th{\theta}
\def\dive{\mathop{\rm div}\nolimits}
\def\cA{{\mathcal A}}
\def\cB{{\mathcal B}}
\def\cC{{\mathcal C}}
\def\cE{{\mathcal E}}
\def\cF{{\mathcal F}}
\def\cO{{\mathcal O}}
\def\cL{{\mathcal L}}
\def\cM{{\mathcal M}}
\def\PP{\Bbb{P}}
\def\eqdef{\buildrel\hbox{\footnotesize def}\over =}
\def\eqdefa{\buildrel\hbox{\footnotesize def}\over =}
\def\N{\mathop{\mathbb N\kern 0pt}\nolimits}
\def\Q{\mathop{\mathbb Q\kern 0pt}\nolimits}
\def\R{\mathop{\mathbb R\kern 0pt}\nolimits}
\def\Z{\mathop{\mathbb Z\kern 0pt}\nolimits}
\newcommand{\beq}{\begin{equation}}
\newcommand{\eeq}{\end{equation}}
\newcommand{\ben}{\begin{eqnarray}}
\newcommand{\een}{\end{eqnarray}}
\newcommand{\beno}{\begin{eqnarray*}}
\newcommand{\eeno}{\end{eqnarray*}}
\newcommand{\andf}{\quad\hbox{and}\quad}
\newcommand{\with}{\quad\hbox{with}\quad}
\newtheorem{thm}{Theorem}[section]
\newtheorem{lem}{Lemma}[section]
\newtheorem{rmk}{Remark}[section]
\newtheorem{cor}{Corollary}[section]
\newtheorem{prop}{Proposition}[section]
\newcommand{\vv}[1]{\boldsymbol{#1}}
\def\ur{u^r}
\def\ut{u^\theta}
\def\uz{u^z}
\def\vr{v^r}
\def\vt{v^\theta}
\def\vz{v^z}
\numberwithin{equation}{section}
\begin{document}
\title[Global stability of  3-D Navier-Stokes equations]
{On the global stability of large Fourier mode for 3-D Navier-Stokes equations}

\author[Y. Liu]{Yanlin Liu}
\address[Y. Liu]{School of Mathematical Sciences,
Laboratory of Mathematics and Complex Systems,
MOE, Beijing Normal University, 100875 Beijing, China.}
\email{liuyanlin@bnu.edu.cn}

\author[P. Zhang]{Ping Zhang}
\address[P. Zhang]{Academy of Mathematics $\&$ Systems Science
	and Hua Loo-Keng Center for Mathematical Sciences, Chinese Academy of
	Sciences, Beijing 100190, CHINA, and School of Mathematical Sciences,
	University of Chinese Academy of Sciences, Beijing 100049, China.} \email{zp@amss.ac.cn}

\date{\today}

\maketitle

\begin{abstract} 
In this paper, we first prove the global existence of strong solutions to 3-D incompressible
Navier-Stokes equations with solenoidal initial data, which writes in the cylindrical coordinates
is of the form: ${\rm A}(r,z)\cos N\th +{\rm B}(r,z)\sin N\th,$ provided that $N$ is large enough. In particular, we prove that
the  corresponding solution has almost the same frequency $N$ for any positive time.   The main idea
of the proof is first to write the solution in trigonometrical series in  $\th$ variable and estimate the coefficients
separately in some scale-invariant spaces, then we handle a sort of weighted sum of these norms of the coefficients
in order to
close the
{\it a priori} estimate of the solution.
Furthermore, we shall extend the above well-posedness result for initial data which is a linear combination of axisymmetric data without swirl
 and infinitely many large mode trigonometric series in the angular variable.	
\end{abstract}

Keywords: Navier-Stokes equations, global well-posedness,
cylindrical coordinates, large Fourier mode.

\vskip 0.2cm
\noindent {\sl AMS Subject Classification (2000):} 35Q30, 76D03  \


\setcounter{equation}{0}
\section{Introduction}\label{Secintro}

In this paper, we investigate the global well-posedness of  the following three-dimensional
 incompressible Navier-Stokes equations  with large Fourier mode initial data:
\beq\label{NS}
\left\{
\begin{aligned}
&\p_t u+ u\cdot\nabla u-\Delta u+\nabla P= 0,
\qquad(t,x)\in\R^+\times\R^3,\\
&\dive u=0,\\
& u|_{t=0} =u_{\rm in},
\end{aligned}
\right.
\eeq
where $u$ stands for the velocity field and $P$ for
the scalar pressure function of the fluid, which guarantees the
divergence-free condition of the velocity field.
This system describes the motion of viscous incompressible fluid.

Leray proved in the seminal paper \cite{lerayns} that given any finite energy initial data, \eqref{NS} has a global-in-time weak solution $u$ which verifies the energy inequality for any positive time $t$
$$\frac  12 \|u(t)\|_{L^2}^2 +\int_0^t \|\nabla u(t')\|_{L^2}^2 dt'
\leq \frac 12 \|u_{\rm in}\|_{L^2}^2\,.$$
Similar existence result  in bounded domain was established by Hopf in \cite{Hopf} for \eqref{NS} with Dirichlet boundary condition.
However, the regularity and uniqueness of such weak solutions still remains open except for some special cases (see \cite{ABC,BV19,JS14,JS15,La67} for instance).

In fact, Leray \cite{lerayns} also proved the local existence of a unique strong solution
$u\in L^\infty([0,T]; H^1)$ with $\nabla u\in L^2([0,T],H^1)$ for some positive time $T.$ Whether or not this local smooth solution blows up in finite time is a fundamental open problem in the field of mathematical fluid mechanics (\cite{Fe06,Tao}).

\smallskip

Yet for initial data with some special structures, there are
 affirmative answers to the above question. The most widely studied case is perhaps the axisymmetric solution of \eqref{NS} which  writes
$$u(t,x)=u^r(t,r,z)\vv e_r+u^\theta(t,r,z)\vv e_\theta+u^z(t,r,z)\vv e_z,$$
where $(r,\theta,z)$ denotes the cylindrical coordinates in $\R^3$
so that $x=(r\cos\theta,r\sin\theta,z)$, and
$$\vv e_r\eqdefa (\cos\theta,\sin\theta,0),\ \vv e_\theta\eqdefa (-\sin\theta,\cos\theta,0),\
\vv e_z\eqdefa (0,0,1),\ r\eqdefa \sqrt{x_1^2+x_2^2}.$$
And if in addition $\ut=0$,  we  call such a solution $u$ to be
axisymmetric without swirl.

For the axisymmetric data without swirl, Ladyzhenskaya \cite{La} and independently Ukhovskii
and Yudovich \cite{UY} proved the global existence of strong solution to \eqref{NS}, which keeps this symmetry.
Later Leonardi et al. \cite{LMNP} gave a simpler proof to this result.  Recently Gallay and  $\check{S}$ver\'ak \cite{GS19}
proved that the system \eqref{NS} admits a unique axisymmetric solution without swirl if the initial vorticity is a circular vortex filament with arbitrarily large circulation Reynolds number.

 While the global existence of strong solutions to \eqref{NS} with general axisymmetric data is still open except for the case
  when $u^\th_{\rm in}$ is sufficiently small, with the size of which  depends on the other components of $u_{\rm in}.$
  The strategy of the proof  is mostly based on a
 perturbation argument for the no swirl case.
There are numerous works concerning this problem,
here we only list the references \cite{CL02, Yau1, Yau2, KNSS,   Liu, LZ, Wei}.

\smallskip

In this paper, we shall not limit ourselves in axisymmetric solutions of \eqref{NS},
but this geometric symmetry will play a crucial role in the proof of our main result below.
Let us write any smooth solution $u$ of \eqref{NS} in cylindrical coordinates as
\begin{equation}\label{ucylin}
u(t,x)=u^r(t,r,\th,z)\vv e_r+u^\theta(t,r,\th,z)\vv e_\theta+u^z(t,r,\th,z)\vv e_z.
\end{equation}
Noticing that in cylindrical coordinates, one has
\beq\label{coordinate}
\na_x=\vv e_r\p_r+\vv e_\theta\f{\pa_\th}{r}+\vv e_z\p_z,
\quad \D_x=\p_r^2+\f{\p_r}{r}+\f{\p_\theta^2}{r^2}+\p_z^2,
\eeq
so that we may equivalently reformulate the system \eqref{NS} in cylindrical coordinates as
\begin{equation}\label{eqtu}
\left\{
\begin{aligned}
& D_t u^r-\bigl(\pa_r^2+\pa_z^2+\f{\pa_r}{r}
+\f{\pa_\th^2}{r^2}-\f1{r^2}\bigr)u^r
-\f{\left(u^\theta\right)^2}{r}
+2\f{\pa_\th u^\theta}{r^2}+\pa_r P=0,\\
& D_t u^\theta-\bigl(\pa_r^2+\pa_z^2+\f{\pa_r}{r}
+\f{\pa_\th^2}{r^2}-\f1{r^2}\bigr)u^\theta
+\f{u^r u^\theta}{r}-2\f{\pa_\th u^r}{r^2}
+\f{\pa_\th P}{r}=0,\\
& D_t u^z-\bigl(\pa_r^2+\pa_z^2+\f{\pa_r}{r}
+\f{\pa_\th^2}{r^2}\bigr) u^z+\pa_z P=0,\\
& \pa_r u^r+\frac{u^r}{r}+\pa_z u^z
+\f{\pa_\th u^\theta}{r}=0,\\
& (u^r,u^\theta,u^z)|_{t=0} =(u_{\rm in}^r,u^\theta_{\rm in},u^z_{\rm in}),
\end{aligned}
\right.
\end{equation}
where $D_t\eqdef\pa_t+ u\cdot\nabla_x=\pa_t+\bigl(u^r\pa_r+\f{u^\theta}{r}\pa_\th
+u^z\pa_z\bigr)$ denotes the material derivative.

\smallskip

Before proceeding, let us recall another kind of interesting initial data which varies slowly in one direction, that is,
 data of the form:
\begin{equation}\label{slowvari}
u_{\rm in}^\varepsilon(x)=(v^1_{\rm in} +\varepsilon w^1_{\rm in},v^2_{\rm in} +\varepsilon w^2_{\rm in},w^3_{\rm in})(x_1,x_2,\varepsilon x_3),\with
\dive v_{\rm in}=\dive w_{\rm in}=0,
\end{equation}
Chemin and Gallagher \cite{CG10} proved the global well-posedness of the system \eqref{NS} for  $\ve$ being sufficiently small (see also \cite{CGZ, CZ15}). In general, we proved in \cite{LZ4} that if $\pa_3u_{\rm in}$ is small enough in some critical space,
then \eqref{NS} has a unique global solution. Whereas if we replace $\pa_3$ by some curved derivative, the corresponding problem becomes  extremely difficult. Indeed even if
$\pa_\th \ur_{\rm in}=\pa_\th \ut_{\rm in}=\pa_\th \uz_{\rm in}=0,$
 which corresponds to the axisymmetric initial data,
the global existence of strong solution to \eqref{NS} remains open.
The first author and Xu \cite{LX} proved  the global well-posedness of \eqref{eqtu} provided that both $u^\th_{\rm in}$
and $(\pa_\th \ur_{\rm in},\pa_\th \ut_{\rm in},\pa_\th \uz_{\rm in})$ are sufficiently small.
The main idea of \cite{LX} is to expand the initial data $u_{\rm in}$ into Fourier series in $\th$ variable:
\begin{equation}\label{initialFourier}
	\left\{
	\begin{split}
& \ur_{\rm in}(x)=\bar u^r_{\rm in}(r,z)
+\sum_{k=1}^\infty\Bigl(a^r_k(r,z)\cos k\th
+b^r_k(r,z)\sin k\th\Bigr),\\
& \ut_{\rm in}(x)=\bar u^\th_{\rm in}(r,z)
+\sum_{k=1}^\infty k^{-1}\Bigl(a^\th_k(r,z)\cos k\th
+b^\th_k(r,z)\sin k\th\Bigr),\\
& \uz_{\rm in}(x)=\bar u^z_{\rm in}(r,z)
+\sum_{k=1}^\infty\Bigl(a^z_k(r,z)\cos k\th
+b^z_k(r,z)\sin k\th\Bigr),
	\end{split}
	\right.
\end{equation}
where the Fourier coefficients are regular enough and decay rapidly as $k\rightarrow\infty$,
and satisfy
\begin{equation}\label{1.16}
	\pa_r\bar u^r_{{\rm in}}+\f{\bar u^r_{{\rm in}}}{r}+\pa_z\bar u^z_{{\rm in}}=0,
\quad\pa_r a^r_k+\f{a^r_k}{r}+\pa_z a^z_k
	+\f{b^\th_k}{r}=0,\quad\pa_r b^r_k
	+\f{b^r_k}{r}+\pa_z b^z_k-\f{a^\th_k}{r}=0.
\end{equation}
It is easy to verify that the constraint \eqref{1.16}
guarantees that $\dive u_{\rm in}=0$. We remark that the smallness condition in \cite{LX} corresponds to
the smallness assumption on the Fourier coefficients $a_k$ and $b_k$ in \eqref{initialFourier}. We shall prove in this paper that there is unnecessary for
smallness condition on  $a_k$ and $b_k$ if the initial data concentrate on high enough frequency.

Our first result states as follows:

\begin{thm}\label{thm1}
{\sl Let \begin{equation}\label{defcM}
\cM\eqdef\Bigl\{f(r,z):\ \|f\|_{\cM}\eqdefa
\bigl\|r^{\f12}f\bigr\|_{L^6}
+\|f\|_{L^2}+\bigl\|(\pa_rf,\pa_zf)\bigr\|_{L^2}
+\bigl\|r^{-1}f\bigr\|_{L^2}<\infty\Bigr\}.
\end{equation} Here all the $L^p$ norm denotes that of $L^p(\R^3)$.
Then for any $(a^r,a^\th,a^z,b^r,b^\th,b^z)\in\cM$ satisfying
$$\pa_r a^r+\f{a^r}{r}+\pa_z a^z
	+\f{b^\th}{r}=0,\quad\pa_r b^r
	+\f{b^r}{r}+\pa_z b^z-\f{a^\th}{r}=0,$$
there exists an integer $N_0>0$ depending only on $\|(a^r,a^\th,a^z,b^r,b^\th,b^z)\|_{\cM}$ such that for any integer $N>N_0$,
the system \eqref{eqtu} with
initial data
\begin{equation}\label{initialN}
	\left\{
	\begin{split}
& \ur_{\rm in}(x)=a^r(r,z)\cos N\th+b^r(r,z)\sin N\th,\\
& \ut_{\rm in}(x)=N^{-1}\bigl(a^\th(r,z)\cos N\th
+b^\th(r,z)\sin N\th\bigr),\\
& \uz_{\rm in}(x)=a^z(r,z)\cos N\th+b^z(r,z)\sin N\th.
	\end{split}
	\right.
\end{equation}
has a unique global strong solution, which can be expanded into the following Fourier series for any positive time $t:$
\begin{equation}\label{solgeneral}
\left\{
\begin{split}
\ur(t,x)=&\ur_{0}(t,r,z)
+\sum_{k=1}^\infty\Bigl(\ur_{k}(t,r,z)\cos kN\th
+\vr_{k}(t,r,z)\sin kN\th\Bigr),\\
\ut(t,x)=&\ut_{0}(t,r,z)+\sum_{k=1}^\infty
\Bigl(\ut_{k}(t,r,z)\cos kN\th
+\vt_{k}(t,r,z)\sin kN\th\Bigr),\\
\uz(t,x)=&\uz_{0}(t,r,z)
+\sum_{k=1}^\infty\Bigl(\uz_{k}(t,r,z)\cos kN\th
+\vz_{k}(t,r,z)\sin kN\th\Bigr).
\end{split}
\right.
\end{equation}
Furthermore,  for any $p\in ]5,6[$ and $\al_p,\beta_p$ satisfying
\begin{equation}\label{condialbeta}
 1<\beta_p<\f{p-1}4,\andf
0<\al_p<\f{p-3-2\beta_p}4,
\end{equation}
$\varpi_k\eqdef\bigl(\ur_k,\vr_k,\uz_k,\vz_k, \sqrt{kN}\ut_k,\sqrt{kN}\vt_k\bigr)$ satisfies
\beq \label{S1eqw}\|u_0\|_{L^\infty(\R^+;L^3)}\leq C_{\rm in}N^{-\f15},
\andf\sum_{k=2}^\infty (kN)^{2\beta_p}
\bigl\|r^{1-\f3p}\varpi_{k}\bigr\|_{L^\infty(\R^+;L^p)}^p
\leq C_{\rm in} N^{-2\al_p},\eeq
where $C_{\rm in}$ designates some positive constant depending only
on $\|(a^r,a^\th,a^z,b^r,b^\th,b^z)\|_{\cM}$.
}\end{thm}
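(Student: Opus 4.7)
The starting observation is that the collection of Fourier modes in $\th$ consisting of integer multiples of $N$ is closed under the quadratic interaction $u\cdot\na u$, so that the ansatz \eqref{solgeneral} is self-consistent. I would substitute it into \eqref{eqtu} to obtain a coupled parabolic system in the 2D variables $(r,z)$ for the coefficients $(\ur_k,\ut_k,\uz_k,\vr_k,\vt_k,\vz_k)(t,r,z)$. The $\pa_\th^2/r^2$ in the Laplacian then produces, for each $k\geq 1$, an enhanced dissipative potential $-(kN)^2/r^2$, while the nonlinearity reduces to advection by the axisymmetric zero mode plus discrete convolutions in the mode index $k$. The divergence-free constraint pairs $\sqrt{kN}\,\ut_k$ and $\sqrt{kN}\,\vt_k$ with $(\ur_k,\vr_k,\uz_k,\vz_k)$ at a common scale, which together with the $N^{-1}$ normalization in \eqref{initialN} explains the definition of $\varpi_k$.

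Having recast the problem in this way, I would run a continuity argument on the aggregate functional
\begin{align*}
\mathcal X(t) \eqdef{} & \|u_0\|_{L^\infty_t(L^3)}^5\,N+\bigl\|r^{1-\f3p}\varpi_1\bigr\|_{L^\infty_t(L^p)}^p \\
& +N^{2\al_p}\sum_{k=2}^\infty(kN)^{2\beta_p}\bigl\|r^{1-\f3p}\varpi_k\bigr\|^p_{L^\infty_t(L^p)},
\end{align*}
assuming $\mathcal X(t)\leq 2C_{\rm in}$ and deriving the strictly improved bound $\mathcal X(t)\leq C_{\rm in}$ for $N$ large enough. Each $k$-mode coefficient is represented via the Duhamel formula associated with the 2D parabolic semigroup whose generator is $\pa_r^2+\pa_z^2+\pa_r/r-(1+(kN)^2)/r^2$; the extra potential $(kN)^2/r^2$ renders this semigroup strongly smoothing, and when tested against $\{f:\ r^{1-3/p}f\in L^p\}$ it supplies a gain which is a negative power of $kN$. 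This is the device that converts the largeness of $N$ into a nonlinear smallness. The zero mode $u_0$ itself satisfies a forced axisymmetric-with-swirl Navier-Stokes system whose source is quadratic in the $\varpi_k$; its size is handled via a Fujita--Kato type analysis in the critical space $L^3(\R^3)$, which yields the weaker rate $N^{-1/5}$ precisely because $u_0$ enjoys no frequency damping.

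The main technical obstacle is to propagate the weighted $\ell^p_k$ control through the convolution $\sum_{j+l=\pm k}\varpi_j\otimes\varpi_l$ produced by $u\cdot\na u$. The lower bound $\beta_p>1$ guarantees the summability of this convolution in the weighted sequence space, whereas the conditions $\beta_p<(p-1)/4$ and $\al_p<(p-3-2\beta_p)/4$ are exactly what is needed to balance the semigroup gain in $kN$ against the loss in the Gagliardo--Nirenberg step used to extract $L^p$-norms of $r^{1-3/p}\varpi_k$ from their derivatives. The restriction $p\in ]5,6[$ ensures simultaneously enough integrability to absorb the $r^{1-3/p}$ weight through a 2D Sobolev embedding in $(r,z)$ and the compatibility with the critical $L^3$ scaling in which $u_0$ lives. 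The $k\geq 2$ modes, having zero initial data, pick up only the nonlinear tail via Duhamel (hence the additional $N^{-2\al_p}$), while the $k=1$ mode absorbs the initial data directly; uniqueness then follows by a standard contraction argument in the same functional framework.
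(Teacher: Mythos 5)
Your overall strategy (restriction to modes that are multiples of $N$, a bootstrap on weighted scale-invariant norms of the coefficients, a critical-$L^3$ treatment of the zero mode, and exploiting the $(kN)^2/r^2$ damping) is in the right spirit, but the proposal is missing the technical heart of the problem: the pressure. After projecting onto the $kN$-th mode, the pressure does not disappear under any 2D Leray projection in $(r,z)$; it enters the $\vv e_\th$-equation through the term $-\f{kN}{r}P_k$, and $P_k$ solves $\cL_{kN}P_k=\dots$ with $\cL_m=-(\pa_r^2+\f{\pa_r}r+\pa_z^2)+\f{m^2}{r^2}$, a Schr\"odinger operator whose potential is not even locally integrable. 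Estimating $\wt\nabla\cL_{kN}^{-1}$ and $\cL_{kN}^{-1}$ composed with $\pa_r,\pa_z,\f1r$ in weighted $L^p$ spaces, with explicit negative powers of $kN$, is precisely what the paper's Section 3 (kernel representation via $F_m$, its asymptotics in $s$ and in $m$, Hardy--Littlewood--Sobolev, and a Riesz-transform-type bound for $\cL_m^{-1}\wt\nabla\wt\nabla$) is for; moreover $\cL_0^{-1}$ behaves much worse, which forces a separate treatment of $u_0$ in Euclidean coordinates where the pressure is a genuine gradient. None of this appears in your sketch, and without it the bootstrap cannot be closed.

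The second gap is the source of the $kN$-gain itself. You attribute it to smoothing of the semigroup generated by $\pa_r^2+\pa_z^2+\f{\pa_r}r-\f{1+(kN)^2}{r^2}$, claiming a gain that is a negative power of $kN$ on the space $\{f:\ r^{1-3/p}f\in L^p\}$. This is not justified and is doubtful as stated: the potential $(kN)^2/r^2$ degenerates as $r\to\infty$, so there is no uniform spectral gap, and any gain must be extracted through $r$-weights. In the paper the gain comes from keeping the weighted dissipation terms $(kN)^2\bigl\|r^{\f{p-5}2}|\wt u_k|^{\f p2}\bigr\|_{L^2_t(L^2)}^2$ and $\bigl\|r^{\f{p-3}2}|\wt\nabla\wt u_k||\wt u_k|^{\f p2-1}\bigr\|_{L^2_t(L^2)}^2$ inside the energy functional and interpolating the nonlinear and pressure terms against them; your functional $\mathcal X(t)$ contains only $L^\infty_t$ quantities, so this mechanism is unavailable. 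Relatedly, the closure in the paper is a two-tier argument: the scale-invariant estimate involves the quantity $D(t)=\sum_k(kN)^2\|r^{-\f32}\wt u_k\|_{L^2_t(L^2)}^2$, which is \emph{not} controlled by the weighted norms themselves but by the $H^1$ energy bound of the full solution via Plancherel-type identities, and the $H^1$ bound is in turn closed through an $L^5_{t,x}$ estimate. Your single-functional contraction does not account for this coupling, nor for how the $L^\infty(\R^+;L^3)$ bound on $u_0$ with rate $N^{-1/5}$ is actually produced (an $L^3$ energy estimate for the forced system in Euclidean coordinates, with the forcing controlled by the interpolated mode bounds).
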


\begin{rmk}
\begin{enumerate}
\item[(1)] Leray  emphasized in  \cite{lerayns} that all the reasonable estimates of the solutions to \eqref{NS} should be
scale-invariant under the following scaling transformation:
\beq \label{S1eq2w} u_\lambda(t,x)=\lambda u(\lambda^2 t,\lambda x) \andf u_{0,\lambda}(x)=\lambda u_0(\lambda x).\eeq
 We notice that for any solution $u$ of \eqref{NS} on $[0,T],$
 $u_\la$ is also a solution of \eqref{NS} on $[0,T/\la^2].$ It is easy to observe that the norms in \eqref{S1eqw}: $\|\cdot\|_{L^\infty(\R^+;L^3)}$ and $\bigl\|r^{1-\f3p}\ \cdot\bigr\|_{L^\infty(\R^+;L^p)}$ are scaling invariant with respect to the
 scaling transformation \eqref{S1eq2w}.

\item[(2)] In Theorem \ref{thm1}, we not only prove the global existence of strong solutions to \eqref{NS}, but also show that
this solution almost concentrates on the frequency  $N$ for any positive time (see \eqref{S1eqw}). In section \ref{secpre} and the proof of Lemma \ref{S2lem2},
 we shall also present the estimate to the Fourier coefficients of the pressure function $P,$ which will be one of the crucial step to prove  Theorem \ref{thm1}. We believe that the estimate of the pressure function $P$ will be helpful to study the global well-posedness of \eqref{eqtu}
 with general  data \eqref{initialFourier}.

 \end{enumerate}
\end{rmk}

\begin{rmk}
\begin{enumerate}
\item[(1)] We remark that axisymmetric data without swirl  corresponds to the case $N=0$ in \eqref{initialN}. While Theorem \ref{thm1} claims the global well-posedness of \eqref{eqtu} with large Fourier mode data.
However, for the intermediate case when $N\in \left\{ 1, \cdots, N_0 \right\},$  the global well-posedness of the system \eqref{eqtu}
with initial data \eqref{initialN}
leaves open.

\item[(2)]
In the recent very interesting paper \cite{ABC}, Albritton, Bru\'e and Colombo
 proved the non-uniqueness of Leray weak solutions of a forced Navier-Stokes equations.
  Elgindi, Chen and Hou \cite{CH21,E21,EJ19} successfully constructed
finite time blow-up solutions for 3-D Euler equations and some relative models.
We  mention that
the axisymmetric  structure of Navier-Stokes (or Euler) equations plays a fundamental role in all these works.

\item[(3)]
It seems that our method of proving Theorem \ref{thm1}  can  be employed to investigate the global well-posedness
 of the system \eqref{NS} in  such axisymmetric domain as the infinite channel
$\{x\in\R^3:\, r\leq1,z\in\R\},$ which will be the aim of our forthcoming paper.

\item[(4)]
We believe that a comprehensive study on the initial data \eqref{initialFourier} with some other special structures can shed lights into the global well-posedness of Navier-Stokes equations with general data.
\end{enumerate}
\end{rmk}

 By combining Theorem \ref{thm1} with the classical  result of  axisymmetric solution without swirl
for  Navier-Stokes equations, we have the following more general result:

\begin{thm}\label{thm2}
{\sl {Let $\cM$ be defined by \eqref{defcM}. Let $\bar u^r_{{\rm in}}(r,z)\vv e_r
+\bar u^z_{{\rm in}}(r,z)\vv e_z\in H^2(\R^3)$ and
 $\Upsilon_k
\eqdef \left(a^r_k,a^\th_k,a^z_k,b^r_k,b^\th_k,b^z_k\right)\in \cM,$ which satisfy \eqref{1.16}
 and
\begin{equation}\label{condsumk} 
\sum_{k=1}^\infty
\|\Upsilon_k\|_{L^3}^{\f32}<\infty.
\end{equation} Then
there exists an integer $N_0>0$
such that for any integers: $N_1,~N_2,\cdots>N_0$,
the system \eqref{eqtu} has a unique global strong solution
with initial data
\begin{equation}\label{initialsum}
	\left\{
	\begin{split}
& \ur_{\rm in}(x)=\bar u^r_{\rm in}(r,z)
+\sum_{k=1}^\infty\Bigl(a^r_k(r,z)\cos N_k\th
+b^r_k(r,z)\sin N_k\th\Bigr),\\
& \ut_{\rm in}(x)=\sum_{k=1}^\infty N_k^{-1}\Bigl(a^\th_k(r,z)\cos N_k\th
+b^\th_k(r,z)\sin N_k\th\Bigr),\\
& \uz_{\rm in}(x)=\bar u^z_{\rm in}(r,z)
+\sum_{k=1}^\infty\Bigl(a^z_k(r,z)\cos N_k\th
+b^z_k(r,z)\sin N_k\th\Bigr).
	\end{split}
	\right.
\end{equation}
}}\end{thm}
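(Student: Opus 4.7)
The plan is to decompose $u=\bar u+w$, where $\bar u$ is the axisymmetric-without-swirl background and $w$ carries the high-frequency Fourier mode contributions. Since $\bar u_{\rm in}=\bar u^r_{\rm in}(r,z)\vv e_r+\bar u^z_{\rm in}(r,z)\vv e_z\in H^2(\R^3)$, the classical global existence for axisymmetric data without swirl (Ladyzhenskaya; Ukhovskii--Yudovich) produces a unique global strong solution $\bar u$ of \eqref{NS} retaining the axisymmetry and the no-swirl property at all times, with quantitative $H^2$ (and higher) regularity bounds that will be treated as fixed background data.

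Setting $u=\bar u+w$, the perturbation satisfies
\begin{equation*}
\pa_t w+\bar u\cdot\na w+w\cdot\na\bar u+w\cdot\na w-\D w+\na Q=0,\quad\dive w=0,\quad w|_{t=0}=w_{\rm in},
\end{equation*}
where $w_{\rm in}$ is the full sum of the Fourier mode contributions in \eqref{initialsum} at base frequencies $N_1,N_2,\ldots>N_0$. The crucial structural observation is that $\bar u$ is independent of $\th$, so the linear operators $w\mapsto\bar u\cdot\na w$ and $w\mapsto w\cdot\na\bar u$ preserve each Fourier mode in $\th$. Expanding $w(t,x)=\sum_{n\in\Z}\hat w_n(t,r,z)e^{in\th}$, the equation for each mode $\hat w_n$ then has essentially the same structure as in the proof of Theorem \ref{thm1}, enriched only by background-convection terms involving $\bar u$ that can be absorbed using its regularity.

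The bulk of the technical work is to extend the weighted-sum \emph{a priori} estimate \eqref{S1eqw} from a single base frequency $N$ to an arbitrary family of large integer frequencies, by propagating a quantity of the form $\sum_{n\in\Z}|n|^{2\beta_p}\bigl\|r^{1-\f3p}\hat w_n\bigr\|_{L^p}^p$ together with a scale-invariant $L^3$-type control of the zero mode of $w$. Taking $N_0$ large makes each high-frequency mode individually small in the relevant scale-invariant norms as in Theorem \ref{thm1}, while the $\ell^{3/2}$-summability \eqref{condsumk} provides the aggregate smallness needed to sum Theorem \ref{thm1}-type estimates across the infinite family $\{N_k\}$; a fixed-point argument in this weighted norm then yields the unique global strong solution.

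The main obstacle is the quadratic cross-interaction in $w\cdot\na w$: two distinct large initial frequencies $N_j$ and $N_k$ combine to produce output at both the sum frequency $N_j+N_k$ and the difference frequency $|N_j-N_k|$. Unlike in Theorem \ref{thm1}, where the entire nonlinear cascade lives on the lattice $\{kN\}_{k\geq0}$ of large frequencies, the difference $|N_j-N_k|$ here may be arbitrarily small, so the ``all active Fourier modes are large'' heuristic fails. Controlling this unavoidable low-frequency output---by treating it as a correction to the axisymmetric background $\bar u$ and exploiting both the $L^3$-smallness forced by \eqref{condsumk} and the scale invariance of the working norms---is the step I expect to require the most care when closing the \emph{a priori} estimate.
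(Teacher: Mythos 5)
Your decomposition $u=\bar u+w$ with $w$ carrying all oscillating modes, followed by a Fourier-mode-weighted energy estimate modelled on Theorem~\ref{thm1}, is a genuinely different route from the paper's, and it contains a gap that you yourself identify but do not resolve. The obstruction is real: the mode-by-mode framework of Theorem~\ref{thm1} relies crucially on every active nonzero angular frequency being an integer multiple of a single large $N$, so that the only inverse operator with poor bounds is $\cL_0^{-1}$ (see Remark~\ref{rmkLm}); with two distinct base frequencies $N_j\neq N_k$ the quadratic interaction feeds every integer frequency $|mN_j\pm nN_k|$, including arbitrarily small nonzero ones, and the weighted quantities $E_{p,\al_p,\beta_p}$ provide no good control of $\cL_m^{-1}$ at small $m$. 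Treating these low-frequency outputs as a ``correction to the axisymmetric background'' does not close the loop either, since such a correction is $\th$-dependent and so is no longer axisymmetric. No mechanism is given for absorbing this cascade.

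The paper sidesteps the problem entirely by refusing to do any Fourier-mode analysis in the perturbation step. Using \eqref{condsumk}, it splits the oscillating data into finitely many modes $k\leq n_0$ and a tail $k>n_0$ whose $L^3$ norm is small (Hausdorff--Young). For each $k\leq n_0$ it applies Theorem~\ref{thm1} to the \emph{full} Navier--Stokes equations with data $w_{k,\rm in}$ at the single frequency $N_k$, obtaining a global solution $w_k$ small in $L^5_tL^5_x$. It then writes the unknown with data $u^{(1)}_{\rm in}+u^{(2)}_{\rm in}$ as $\bar u+\sum_{k\leq n_0}w_k+v$; all cross-interaction terms $w_i\otimes w_j$, $\bar u\otimes w_k$ appear only as explicit source terms in the $v$-equation \eqref{6.4}, and they are controlled simply by the $L^5$ smallness of each $w_k$ (for $N_0$ large), with no reference to which frequency $|N_j-N_k|$ they live on. An $L^3$ energy estimate with Gronwall then keeps $v$ small, giving an $L^5_tL^5_x$ bound for $w=\bar u+\sum w_k+v$; finally the tail contribution $\delta=u-w$ is handled by another $L^3$ perturbation estimate using the $L^3$ smallness of $u_{\rm in}^{(3)}$ and the Ladyzhenskaya--Prodi--Serrin criterion. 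In short: the paper superposes several globally small solutions and estimates only their pairwise products in Lebesgue norms, rather than trying to propagate a single weighted Fourier-mode quantity — this is precisely what neutralises the low-frequency cascade you flagged.
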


\begin{rmk}
\begin{enumerate}
\item[(1)]  The summable condition \eqref{condsumk} holds automatically if $\{\Upsilon_k\}$ is a finite sequence. And one may check the proof of
Theorem \ref{thm2} in Section \ref{secthm2} for the determination of  the integer $N_0.$

\item[(2)]  Together with Theorem 2.1 of \cite{GIP03}, our proof of Theorem \ref{thm2} implies that
the system \eqref{NS} still have a unique global solution even if
we replace $\left(\bar u^r_{\rm in}(r,z),0,\bar u^z_{\rm in}(r,z)\right)$
in \eqref{initialsum} by  any  initial data which generates a unique global strong solution of \eqref{NS}.
\end{enumerate}
\end{rmk}

Let us end this section with some notations that we shall use throughout this paper.

\noindent{\bf Notations:}
We always denote $C$ to be a uniform constant
which may vary from line to line.
$a\lesssim b$ means that $a\leq Cb$,  and $a\thicksim b$
means that both $a\leq Cb$ and $b\leq Ca$ hold,  while $a\lesssim_\e b$ means that $a\leq C_\e b$ for some constant $C_\e$ depending on $\e.$
For a Banach space $B$, we shall use the shorthand $L^p_T(B)$ for $\bigl\|\|\cdot\|_B\bigr\|_{L^p(0,T)}$. Finally
all the $L^p$ norm designates the $L^p(\R^3)$ norm so that
$\|f\|_{L^p}=\Bigl(\int_0^\infty\int_0^{2\pi}\int_{-\infty}^\infty |f(r,\theta,z)|^p \,rdr\,d\theta\,dz\Bigr)^{\f1p}.$

\setcounter{equation}{0}
\section{Ideas and structure of the proof}\label{secidea}

The purpose of this section is to sketch the main idea used in the proof of Theorem \ref{thm1}.
Before proceeding, we first present some symmetric properties of
Navier-Stokes equations in cylindrical coordinates besides the well-known axisymmetric one.
The persistence of these symmetries deeply reflects some nonlinear structures
of the system, and is of independent interest.

\begin{thm}\label{thm3}
{\sl {\rm(i)} Let
 $\ur_{\rm in}$ and $\uz_{\rm in}$   be even and  $\ut_{\rm in}$ be odd  in $\th$ variable.
Then this odevity will persist as long as the
strong solution $u$ of \eqref{eqtu} exists.

\noindent{\rm(ii)} Let $(\ur_{\rm in}, \ut_{\rm in}, \uz_{\rm in})$
be $\f{2\pi}{m}$-periodic in  $\th$ variable for some $m\in\N^+.$
Then the strong solution $u$ of \eqref{eqtu}   remains
$\f{2\pi}{m}$-periodic in $\th$ variable as long as it exists.
}\end{thm}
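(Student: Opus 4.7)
The plan is to exploit the uniqueness of strong solutions of \eqref{eqtu} together with the invariance of the cylindrical system \eqref{eqtu} under two discrete symmetries in the $\th$ variable: the reflection $\th\mapsto-\th$ combined with a sign flip of $\ut$ (for part (i)), and the translation $\th\mapsto\th+\f{2\pi}{m}$ (for part (ii)). In both cases I would construct from the given strong solution $u$ a second strong solution $\wt u$ of \eqref{eqtu} that shares the same initial data, and then invoke uniqueness to conclude $\wt u\equiv u$, which is exactly the claimed persistence of symmetry.

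For part (i), I would define $\wtur(t,r,\th,z)\eqdef \ur(t,r,-\th,z),$ $\wtut(t,r,\th,z)\eqdef -\ut(t,r,-\th,z),$ $\wtuz(t,r,\th,z)\eqdef \uz(t,r,-\th,z)$, and $\wt P(t,r,\th,z)\eqdef P(t,r,-\th,z).$ The main step is then to verify that $(\wtur,\wtut,\wtuz,\wt P)$ again solves \eqref{eqtu}. Introducing the ``tilded'' material derivative $\wt D_t\eqdef \pa_t+\wtur\pa_r+\f{\wtut}{r}\pa_\th+\wtuz\pa_z$, a direct computation shows that for an ``even'' scalar $\wt\phi(t,r,\th,z)=\phi(t,r,-\th,z)$ one has $\wt D_t\wt\phi(t,r,\th,z)=(D_t\phi)(t,r,-\th,z),$ while for an ``odd'' scalar $\wt\psi(t,r,\th,z)=-\psi(t,r,-\th,z)$ one has $\wt D_t\wt\psi(t,r,\th,z)=-(D_t\psi)(t,r,-\th,z).$ Under this classification, $\wtur,\wtuz,\wt P$ are even and $\wtut$ is odd; a term-by-term inspection of \eqref{eqtu} then shows that the $\ur$- and $\uz$-equations are globally even (all of $(\ut)^2/r,\,\pa_\th\ut/r^2,\,\pa_r P,\,\pa_z P$ are even, and the Laplacian-like operator, which involves only $\pa_r,\pa_z,\pa_\th^2$ and the factor $-1/r^2,$ preserves parity), whereas the $\ut$-equation is globally odd (all of $\ur\ut/r,\,\pa_\th\ur/r^2,\,\pa_\th P/r$ are odd). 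Hence evaluating each equation of \eqref{eqtu} at $(\wtur,\wtut,\wtuz,\wt P)(t,r,\th,z)$ reproduces, up to the appropriate global sign, the same equation at $(u,P)(t,r,-\th,z)$, which vanishes. The divergence-free constraint is preserved by identical bookkeeping. Since the odevity assumption on $u_{\rm in}$ gives $\wt u|_{t=0}=u|_{t=0},$ uniqueness of strong solutions to \eqref{eqtu} forces $\wt u\equiv u$ on the life-span of $u,$ which is exactly the asserted odevity.

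For part (ii), set $\wt u(t,r,\th,z)\eqdef u\bigl(t,r,\th+\f{2\pi}{m},z\bigr)$ componentwise, and $\wt P(t,r,\th,z)\eqdef P\bigl(t,r,\th+\f{2\pi}{m},z\bigr).$ Since \eqref{eqtu} depends on the $\th$ variable only through the operator $\pa_\th,$ it is manifestly invariant under translation in $\th,$ so $(\wt u,\wt P)$ is again a strong solution of \eqref{eqtu}. The $\f{2\pi}{m}$-periodicity of $u_{\rm in}$ yields $\wt u|_{t=0}=u|_{t=0},$ and uniqueness again forces $\wt u\equiv u,$ which is exactly the claimed $\f{2\pi}{m}$-periodicity in $\th.$

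The main, and essentially only, delicate point is the parity bookkeeping in part (i): one has to track carefully how the material derivative, the Laplacian-like operator with its $-1/r^2$ correction, each quadratic term (e.g.~$(\ut)^2/r,\,\ur\ut/r,\,\pa_\th\ut/r^2,\,\pa_\th\ur/r^2$), and each pressure-gradient component ($\pa_r P,\,\pa_z P,\,\pa_\th P/r$) transform under $\th\mapsto-\th$ combined with the sign flip of $\ut.$ Once this parity accounting is in place, both parts of the theorem reduce to the uniqueness of strong solutions of \eqref{eqtu}.
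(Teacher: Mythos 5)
Your proof is correct and takes essentially the same approach as the paper: construct a second solution via the reflection $\th\mapsto-\th$ with a sign flip on $\ut$ (resp.\ the translation $\th\mapsto\th+\f{2\pi}{m}$), observe it has the same initial data, and conclude by uniqueness of strong solutions. Your parity bookkeeping in part (i) is sound and simply fills in the verification that the paper leaves implicit.
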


\begin{rmk}
{\rm Theorem \ref{thm3}.(ii)} can be regarded as a generalization of
the classical result that the strong solution of \eqref{NS} with
axisymmetric initial data remains to be axisymmetric.
Indeed, if $u_{\rm in}$ is axisymmetric, which means $u_{\rm in}$  is
$\f{2\pi}{m}$-periodic in $\th$ variable for every $m\in\N^+$,
then {\rm Theorem \ref{thm3}.(ii)} ensures that the corresponding strong solution $u$ of \eqref{NS} is also $\f{2\pi}{m}$-periodic in $\th$
variable for every $m\in\N^+$,
which implies that $u$ is axisymmetric.
\end{rmk}

\begin{proof}
[{\bf Proof of Theorem \ref{thm3}}]
(i)\ Let $u=(\ur, \ut, \uz)$ be the  unique local strong solution of \eqref{eqtu} on $[0,T].$ We introduce $\bar{u}=(\bar{u}^r, \bar{u}^\th, \bar{u}^z)$ via
\beq \label{urtz}\begin{split} \bar{u}^r(t,r,\th,z)\eqdefa &\ur(t,r,-\th,z),\quad \bar{u}^\th(t,r,\th,z)\eqdefa-\ut(t,r,-\th,z),
\andf\\
 &\bar{u}^z(t,r,\th,z)\eqdefa\uz(t,r,-\th,z).\end{split}
 \eeq
Notice that under the odevity assumption of $(\ur_{\rm in}, \ut_{\rm in}, \uz_{\rm in})$ in {\rm Theorem \ref{thm3}.(i)}, one has
$$\ur_{\rm in}(r,-\th,z)=\ur_{\rm in}(r,\th,z),
\quad u^\th_{\rm in}(r,-\th,z)=-\ut_{\rm in}(r,\th,z),
\quad u^z_{\rm in}(r,-\th,z)=\uz_{\rm in}(r,-\th,z).$$
Hence $\bar{u}=(\bar{u}^r, \bar{u}^\th, \bar{u}^z) $ defined by \eqref{urtz}
verifies the system \eqref{eqtu}  on $[0,T]$.
Then we deduce from the uniqueness of strong solution to  Navier-Stokes equations
that $u=\bar{u}$ on $[0,T],$ which means that $\ur$ and $\uz$ are even in  $\th$ variable,
and $\ut$ is odd in $\th$ variable.

\no (ii)\ Let $u=(\ur, \ut, \uz)$ be the  unique local strong solution of \eqref{eqtu} on $[0,T].$
We introduce $w$ to be
$$w(t,r,\th,z)=(w^r,w^\th,w^z)(t,r,\th,z)\eqdefa(\ur,\ut,\uz)(t,r,\th+\f{2\pi}{m},z).$$
Since $(\ur_{\rm in}, \ut_{\rm in}, \uz_{\rm in})$
is $\f{2\pi}{m}$-periodic in  $\th$ variable,
$u_{\rm in}$  satisfies
$$(\ur_{\rm in},u^\th_{\rm in},u^z_{\rm in})(r,\th+\f{2\pi}{m},z)
=(\ur_{\rm in},u^\th_{\rm in},u^z_{\rm in})(r,\th,z).$$
Hence $w$ also satisfies the system \eqref{eqtu} with initial data $u_{\rm in}$.
Then it follows from the uniqueness of strong solution to \eqref{eqtu}
that $u=w$ on $[0,T],$ which implies that $\ur,~\ut$ and $\uz$ keep to be
$\f{2\pi}{m}$-periodic in $\th$ variable. This completes the proof of Theorem \ref{thm3}.
\end{proof}

Now let us turn to the proof of Theorem \ref{thm1}.
In order to simplify the notations and the calculations,
so that we can highlight the main new ideas of this paper,
we choose first to consider initial data of the
following special form of \eqref{initialN}:
\begin{equation}\label{initialNodd}
u^{(1)}_{\rm in}(x)=a^r(r,z)\cos (N\th) \vv e_r
+N^{-1}b^\th(r,z)\sin (N\th)\vv e_\th
+a^z(r,z)\cos (N\th)\vv e_z.
\end{equation}
For the general case, we shall outline its proof at the end of this section.

For initial data given by \eqref{initialNodd}, the system \eqref{eqtu} has  a unique  strong solution   $(\ur,\ut,\uz)$
on $[0,T^\ast[$ for some $T^\ast\in ]0,\infty].$
For any $t<T^\ast,$ we deduce from Theorem \ref{thm3}
that $(\ur,\ut,\uz)$ can be expanded into the following Fourier series:
\begin{equation}\label{solexpan}
\left\{
\begin{split}
\ur(t,x)=&\ur_{0}(t,r,z)
+\sum_{k=1}^\infty\ur_{k}(t,r,z)\cos kN\th,\\
\ut(t,x)=&\sum_{k=1}^\infty\vt_{k}(t,r,z)\sin kN\th,\\
\uz(t,x)=&\uz_{0}(t,r,z)
+\sum_{k=1}^\infty\uz_{k}(t,r,z)\cos kN\th.
\end{split}
\right.
\end{equation}

Correspondingly, in view of the system \eqref{eqtu}
and  \eqref{solexpan}, we observe
 that the pressure function
$P$ must be even in  $\th$ variable. So that we write
\begin{equation}\label{Pexpan}
P(t,x)=P_{0}(t,r,z)
+\sum_{k=1}^\infty P_{k}(t,r,z)\cos kN\th.
\end{equation}
One should keep in mind that we always use the subscript $k$ to denote the $kN$-th Fourier coefficient for notational simplification.
And in what follows, we always denote
\beq \label{S2notion}
\begin{split}
&\wt u\eqdef\ur\vv e_r+\uz\vv e_z,
\quad D_{0,t}\eqdefa \p_t+u_0^r\pa_r+u_0^z\pa_z,\\
u_0\eqdefa \ur_0\vv e_r&+\uz_0\vv e_z,
\quad\wt u_k\eqdef\ur_k\vv e_r+\uz_k\vv e_z,\andf
\wt\nabla\eqdef\vv e_r\pa_r+\vv e_z\pa_z.
\end{split}
\eeq

By substituting  \eqref{solexpan} and \eqref{Pexpan}
into the system \eqref{eqtu} and using the identities
\begin{align*}
&2\cos k_1\th\cdot\cos k_2\th=\cos (k_1+k_2)\th+\cos (k_1-k_2)\th,\\
&2\sin k_1\th\cdot\sin k_2\th=\cos (k_1-k_2)\th-\cos (k_1+k_2)\th,\\
&2\sin k_1\th\cdot\cos k_2\th=\sin (k_1+k_2)\th+\sin (k_1-k_2)\th,
\end{align*}
and finally comparing the Fourier coefficients of the resulting equations, we find that
\begin{itemize}
\item
$(\ur_0,\uz_0)$ verifies
\end{itemize}
\begin{equation}\label{eqtu0}
\left\{
\begin{split}
&D_{0,t}\ur_{0}
-\bigl(\D-\frac{1}{r^2}\bigr)\ur_{0}+\pa_r P_{0}
=-\f12\sum_{k=1}^\infty\Bigl(\wt u_k\cdot\wt\nabla\ur_{k}
-\f{|\vt_{k}|^2}r-\f{kN}r\vt_k\ur_k\Bigr),\\
&D_{0,t}\uz_{0}
-\D\uz_{0}+\pa_z P_{0}
=-\f12\sum_{k=1}^\infty\Bigl(\wt u_k\cdot\wt\nabla\uz_{k}
-\f{kN}r\vt_k\uz_k\Bigr),\\
& \pa_r\ur_{0}+\frac{\ur_{0}}r+\pa_z\uz_{0}=0,\\
& \ur_{0}|_{t=0}=\uz_{0}|_{t=0}=0.
\end{split}
\right.
\end{equation}

\begin{itemize}
\item
 $\left(\ur_{k},~\vt_{k},~\uz_{k}\right)$ for $k\in\N^+$ satisfies
 \end{itemize}
\begin{equation}\label{eqtuk}
\left\{
\begin{split}
&D_{0,t}\ur_{k}
+\wt u_k\cdot\wt\nabla\ur_0
-\bigl(\D-\frac{1+k^2N^2}{r^2}\bigr)\ur_{k}
+2\f{kN}{r^2}\vt_{k}+\pa_r P_{k}=F^r_{k},\\
&D_{0,t}\vt_{k}
+\f{\ur_0\vt_{k}}r-\bigl(\D-\frac{1+k^2N^2}{r^2}\bigr)\vt_{k}
+2\f{kN}{r^2}\ur_{k}-\f{kN}r P_{k}=F^\th_{k},\\
&D_{0,t}\uz_{k}
+\wt u_k\cdot\wt\nabla\uz_0-\bigl(\D-\frac{k^2N^2}{r^2}\bigr)
\uz_{k}+\pa_z P_{k}=F^z_{k},\\
& \pa_r\ur_{k}+\frac{\ur_k}r+\pa_z\uz_{k}
+kN\f{\vt_{k}}r=0,\\
& (\ur_{1},\vt_{1},\uz_{1})|_{t=0}=(a^r,N^{-1}b^\th,a^z)
\andf(\ur_{k},\vt_{k},\uz_{k})|_{t=0}=0
~\text{ for }~k\geq2,
\end{split}
\right.
\end{equation}
where the external force terms, $F^r_{k},~F^\th_{k},~F^z_{k},$ are given respectively by
\beq\label{eqtukq}
\begin{split}
F^r_k\eqdefa&-\f12\sum_{\substack{
|k_1-k_2|=k\\
\text{ or }k_1+k_2=k}}
\wt u_{k_1}\cdot\wt\nabla\ur_{k_2}
+\f{1}{2r}\Bigl(\sum_{|k_1-k_2|=k}
-\sum_{k_1+k_2=k}\Bigr)
\bigl(\vt_{k_1}\vt_{k_2}+k_2 N \vt_{k_1}\ur_{k_2}\bigr),\\
F^\th_k\eqdefa &\f12\Bigl(\sum_{k_2-k_1=k}
-\sum_{\substack{k_1+k_2=k\\
\text{ or }k_1-k_2=k}}\Bigr)
\Bigl(\wt u_{k_2}\cdot\wt\nabla\vt_{k_1}
+\f1{r}\bigl(\vt_{k_1}\ur_{k_2}+k_2N\vt_{k_1}\vt_{k_2}\bigr)\Bigr),\\
F^z_k\eqdefa&-\f12\sum_{\substack{|k_1-k_2|=k\\
\text{ or }k_1+k_2=k}}
\wt u_{k_1}\cdot\wt\nabla\uz_{k_2}+\f{N}{2r}\Bigl(\sum_{|k_1-k_2|=k}
-\sum_{k_1+k_2=k}\Bigr)
{k_2\vt_{k_1}\uz_{k_2}},
\end{split} \eeq
and $k_1,k_2\in\N^+$ in the above summations.

In Section \ref{secEp}, we shall prove the following {\it a priori} estimate:

\begin{prop}\label{propEp}
{\sl  Let
$p\in ]5,6[,$ $\al_p$ and $\beta_p$ satisfy \eqref{condialbeta}.
Let $(\ur_0,\uz_0)$ and $\left(\ur_{k},~\vt_{k},~\uz_{k}\right)$ be smooth enough solutions of
 \eqref{eqtu0} and \eqref{eqtuk} respectively on $[0,T]$. We denote $E_{p,\al_p,\beta_p}(t)\eqdefa E_{p,\al_p,\beta_p}^{r,z}(t)
+E_{p,\al_p,\beta_p}^\th(t)$ with
\begin{equation}\begin{split}\label{defEpr}
E_{p,\al_p,\beta_p}^{r,z}(t)\eqdef
&N^{-2\al_p}\Bigl(\bigl\|r^{\f{p-3}2}|\wt u_1|^{\f p2}\bigr\|_{L^\infty_t(L^2)}^2
+\bigl\|r^{\f{p-3}2}|{\wt\nabla}\wt u_1|
|\wt u_1|^{\f p2-1}\bigr\|_{L^2_t(L^2)}^2\\
&+N^{2}\bigl\|r^{\f{p-5}2}|\wt u_1|^{\f p2}\bigr\|_{L^2_t(L^2)}^2\Bigr)
+\sum_{k=2}^\infty(kN)^{2\beta_p}\Bigl(
\bigl\|r^{\f{p-3}2}|\wt u_{k}|^{\f p2}\bigr\|_{L^\infty_t(L^2)}^2\\
&+\bigl\|r^{\f{p-3}2}|{\wt\nabla}\wt u_k|
|\wt u_k|^{\f p2-1}\bigr\|_{L^2_t(L^2)}^2
+(kN)^{2}\bigl\|r^{\f{p-5}2}|\wt u_{k}|^{\f p2}\bigr\|_{L^2_t(L^2)}^2\Bigr),
\end{split}\end{equation}
and
 \begin{equation}\begin{split}\label{defEpth}
E_{p,\al_p,\beta_p}^\th(t)\eqdef&N^{\f p2-2\al_p}
\Bigl(\bigl\|r^{\f{p-3}2}|\vt_1|^{\f p2}\bigr\|
_{L^\infty_t(L^2)}^2
+\bigl\|r^{\f{p-3}2}|{\wt\nabla}\vt_1|
|\vt_1|^{\f p2-1}\bigr\|_{L^2_t(L^2)}^2\\
&+N^2\bigl\|r^{\f{p-5}2}|\vt_1|^{\f p2}\bigr\|_{L^2_t(L^2)}^2\Bigr)+\sum_{k=2}^\infty(kN)^{\f p2+2\beta_p}\Bigl(
\bigl\|r^{\f{p-3}2}|\vt_{k}|^{\f p2}\bigr\|
_{L^\infty_t(L^2)}^2\\
&+\bigl\|r^{\f{p-3}2}|{\wt\nabla}\vt_k|
|\vt_k|^{\f p2-1}\bigr\|_{L^2_t(L^2)}^2
+(kN)^2\bigl\|r^{\f{p-5}2}|\vt_{k}|^{\f p2}\bigr\|_{L^2_t(L^2)}^2\Bigr).
\end{split}\end{equation}
 Then there exists large enough integer $N_0$ so that  for $N>N_0$ and $t\leq T,$ there holds
\begin{equation}\begin{split}\label{aprioriEp}
\bigl(\f12-C\|u_0\|_{L^\infty_t(L^3)}\bigr)E_{p,\al_p,\beta_p}(t)
\leq &E_{p,\al_p,\beta_p}(0)
+CE_{p,\al_p,\beta_p}^{1+\f1p}(t)
+CN^{-2}D^{\f{p+1}2}(t)\\
&+CN^{-p}\Bigl(\|u_0\|_{L^\infty_t(L^3)}
+N^{-\f{p}{2(p-5)}}\Bigr)D^{\f{p}2}(t),
\end{split}\end{equation}
where $D(t)\eqdef\sum_{k=1}^\infty(kN)^2
\bigl\|r^{-\f32}\wt u_k\bigr\|_{L^2_t(L^2)}^2$.
}\end{prop}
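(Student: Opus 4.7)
The plan is to perform a weighted $L^p$-type energy estimate on each Fourier mode of the system \eqref{eqtu0}--\eqref{eqtuk}, and then sum the resulting identities over $k\ge 1$ with the weights $(kN)^{2\beta_p}$ for $k\ge 2$ and $N^{-2\al_p}$ for $k=1$ prescribed by \eqref{defEpr}--\eqref{defEpth}. For fixed $k\ge 1$, I would test the first and third equations of \eqref{eqtuk} against $r^{p-3}\wt u_k|\wt u_k|^{p-2}$ and the second one against $r^{p-3}\vt_k|\vt_k|^{p-2}$, integrating with the cylindrical measure $r\,dr\,d\th\,dz$. Using the divergence-free identity $\pa_r\ur_0+\ur_0/r+\pa_z\uz_0=0$, the material derivative $D_{0,t}$ produces $\f1p\pa_t\bigl\|r^{(p-3)/2}|\wt u_k|^{p/2}\bigr\|_{L^2}^2$ plus a commutator of the schematic form $\int\ur_0|\wt u_k|^p r^{p-2}\,dr\,d\th\,dz$; the latter is bounded, via H\"older and the Sobolev embedding $\|g\|_{L^6(\R^3)}\lesssim\|\wt\nabla g\|_{L^2(\R^3)}$ applied to $g=r^{(p-3)/2}|\wt u_k|^{p/2}$, by $\|u_0\|_{L^\infty_t(L^3)}$ times the diffusive pieces in \eqref{defEpr}--\eqref{defEpth}, and moving this to the left yields the prefactor $\f12-C\|u_0\|_{L^\infty_t(L^3)}$ in \eqref{aprioriEp}. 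Integration by parts on $-(\D-(1+k^2N^2)/r^2)$ for $\ur_k,\vt_k$ and $-(\D-(kN)^2/r^2)$ for $\uz_k$ produces precisely the coercive quantities $\bigl\|r^{(p-3)/2}|\wt\nabla\wt u_k||\wt u_k|^{p/2-1}\bigr\|_{L^2_t(L^2)}^2$ and $(kN)^2\bigl\|r^{(p-5)/2}|\wt u_k|^{p/2}\bigr\|_{L^2_t(L^2)}^2$ (with their $\vt_k$ analogues), while the cross couplings $2(kN)/r^2\,\vt_k$ in the $\ur_k$-equation and $2(kN)/r^2\,\ur_k$ in the $\vt_k$-equation are absorbed by Young's inequality against these $(kN)^2/r^2$ coercive terms.

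Next I would treat the pressure and the nonlinear forcing. The pressure contributions $\int(\pa_r P_k)\ur_k|\wt u_k|^{p-2}r^{p-2}\,dr\,d\th\,dz$, its $z$-analogue and $-\int(kN/r)P_k\,\vt_k|\vt_k|^{p-2}r^{p-2}\,dr\,d\th\,dz$ require the elliptic estimate for $P_k$ developed in Section \ref{secpre} and proven along the lines of Lemma \ref{S2lem2}: taking the weighted divergence of the momentum equations in \eqref{eqtuk} gives an elliptic problem of the form $(\D-(kN)^2/r^2)P_k=$ (quadratic in the Fourier coefficients), and the resulting $L^{p/(p-1)}$-type bound on $(\wt\nabla P_k,(kN/r)P_k)$ reduces the pressure terms to the same cubic structure as the forcings $F^r_k,F^\th_k,F^z_k$ in \eqref{eqtukq}, up to a remainder that produces the $D(t)$ terms. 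Each of $F^r_k,F^\th_k,F^z_k$ is a convolution over $k_1\pm k_2=k$; after H\"older and Sobolev interpolation, with one factor placed in the weighted $L^6$ slot (controlled by the gradient part of \eqref{defEpr}--\eqref{defEpth}) and the others in weighted $L^p$ slots, I would sum in $k$ using the discrete Young-type inequality $\sum_{k_1+k_2=k}(k_1N)^{\beta_p}(k_2N)^{\beta_p}\lesssim(kN)^{2\beta_p}$, valid because $2\beta_p>2$. The cubic terms in which the low-regularity quantity $\|r^{-3/2}\wt u_{k_j}\|_{L^2_t(L^2)}$ appears contribute the $N^{-2}D^{(p+1)/2}(t)$ and $N^{-p}(\|u_0\|_{L^\infty_t(L^3)}+N^{-p/(2(p-5))})D^{p/2}(t)$ terms in \eqref{aprioriEp}, the large-$N$ gains coming from redistributing powers of the coercive weight $(kN)^2/r^2$; the remaining contributions assemble into $E_{p,\al_p,\beta_p}^{1+1/p}(t)$.

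The main obstacle will be the convolution bookkeeping for those pieces of $F^r_k$ and $F^\th_k$ that carry the extra factor $k_2N$, which originate from using the modified divergence-free relation in \eqref{eqtuk} to trade $\vt_{k_j}$ for $r$-weighted radial/axial derivatives of $\ur_{k_j},\uz_{k_j}$. Closing the estimate requires a convolution bound of the form $\sum_{k_1+k_2=k}(k_1N)^{\beta_p}(k_2N)^{\beta_p+1}\lesssim(kN)^{2\beta_p+1}$, which forces the lower bound $\beta_p>1$ in \eqref{condialbeta}; the upper bounds $\beta_p<(p-1)/4$ and $\al_p<(p-3-2\beta_p)/4$ then guarantee that the exceptional $k=1$ mode, carrying the distinguished weight $N^{-2\al_p}$, interacts with the high-mode block in a way compatible with the $p$-exponent in the H\"older--Sobolev interpolation. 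Once the weighted $L^p$ identities have been assembled and summed, the pressure absorbed via Lemma \ref{S2lem2}, and every cubic contribution identified as either $E_{p,\al_p,\beta_p}^{1+1/p}(t)$, $N^{-2}D^{(p+1)/2}(t)$, or the $N^{-p}$ piece, \eqref{aprioriEp} follows by moving the term $C\|u_0\|_{L^\infty_t(L^3)}E_{p,\al_p,\beta_p}(t)$ onto the left-hand side.
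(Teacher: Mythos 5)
Your overall strategy coincides with the paper's: weighted $L^p$ energy estimates mode by mode, summation with the weights $N^{-2\al_p}$, $(kN)^{2\beta_p}$, convolution bookkeeping for $F^r_k,F^\th_k,F^z_k$, and a reduction of the pressure via the operator $\cL_{kN}^{-1}$. But there is a genuine gap in your treatment of the pressure. You assert that taking the (mode-$k$) divergence of the momentum equations yields $\cL_{kN}P_k=$ ``quadratic in the Fourier coefficients.'' That is not true: applying $(\pa_r+\f1r)$, $\f{kN}r$, $\pa_z$ to the three equations of \eqref{eqtuk} does not annihilate the linear viscous and time-derivative terms. One is left, before any cancellation, with a linear term of order $N^3$, which disappears only after invoking the fourth equation of \eqref{eqtuk}; what survives are the genuinely linear terms $\bigl[\D,\pa_r+\f1r\bigr]\ur_k+\bigl[\D,\f{kN}r\bigr]\vt_k-\f{2}{r^3}\ur_k+2kN\bigl(\pa_r+\f1r\bigr)\bigl(\f{\vt_k}{r^2}\bigr)-\f1{r^2}\pa_z\uz_k$, i.e.\ $\cA_k^{(3)}$ in \eqref{3.21}. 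These linear terms — not the ``cubic terms with a low-regularity factor'' you invoke — are exactly what produces the $N^{-p}N^{-\f{p}{2(p-5)}}D^{\f p2}$ contribution in \eqref{aprioriEp}, via the interpolation and Young splitting of \eqref{estiPsi3} whose exponent degenerates as $p\to5$. With a purely quadratic right-hand side you would neither see this term nor have a mechanism to generate the stated form of \eqref{aprioriEp}; the $N^{-2}D^{\f{p+1}2}$ term, by contrast, comes from the quadratic part $\cA_k^{(4)}$, and the $\|u_0\|_{L^\infty_t(L^3)}D^{\f p2}$-type term from the bilinear $u_0\otimes\wt u_k$ part, so your attribution of the $D(t)$ terms would not close the estimate.

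A second substantive point you gloss over is that the double-derivative pieces of the pressure (for instance $\pa_r\wt\nabla\cdot(u_0\ur_k+\wt u_k\ur_0)$ and $\pa_r\wt\nabla\cdot(\wt u_{k_1}\ur_{k_2})$) require bounding $\cL_{kN}^{-1}\wt\nabla\wt\nabla$, which is \emph{not} covered by the kernel/Hardy--Littlewood--Sobolev estimates of Proposition~\ref{lemLm}: the condition \eqref{conditionj} fails for $j=2$. The paper needs the separate axisymmetry-based Riesz-transform argument of Propositions~\ref{lemLmRiesz} and \ref{lemLmweightedRiesz}, and your plan of a direct ``$L^{p/(p-1)}$-type bound on $(\wt\nabla P_k,\f{kN}r P_k)$'' does not account for this. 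Two smaller issues: the discrete inequality you cite, $\sum_{k_1+k_2=k}(k_1N)^{\beta_p}(k_2N)^{\beta_p}\lesssim(kN)^{2\beta_p}$, is false as stated (the left-hand side is of size $(kN)^{2\beta_p}\,kN$); what is actually used is that, once the weights are carried by the energy norms, the leftover factors form convergent series such as $\sum_j j^{(-1+\f3p-\f4p\beta_p)\f p{p-2}}$, thanks to $\beta_p>1$. And besides the transport commutator, the stretching terms $\wt u_k\cdot\wt\nabla\ur_0$, $\wt u_k\cdot\wt\nabla\uz_0$ in \eqref{eqtuk} must be integrated by parts (to avoid derivatives of $u_0$) and also contribute to the $C\|u_0\|_{L^\infty_t(L^3)}E_{p,\al_p,\beta_p}$ absorption; your sketch omits them.
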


\begin{rmk} In \cite{lerayns}, Leray emphasized that the  energy method and the scale-invariant
property are very important
in the study of Navier-Stokes equations. Here we shall use scaled $L^p$ type energy estimate to prove Proposition \ref{propEp}.
Moreover,
we observe that all the terms in the energy $E_{p,\al_p,\beta_p}(t)$
given by \eqref{defEpr} and \eqref{defEpth}
are scale-invariant with respect to the scaling transformation \eqref{S1eq2w}.
\end{rmk}

In order to close the estimate \eqref{aprioriEp}, we  need to deal with
the  $L^\infty_t(L^3)$ estimate  of $u_0$. However,
due to the operator $\cL_0^{-1}$ behaves much worse than $\cL_{kN}^{-1}$
(see Remark \ref{rmkLm}), where
\begin{equation}\label{defcLm}
\cL_m\eqdef -\bigl(\pa_r^2+\f{\pa_r}r
+\pa_z^2\bigr)+\f{m^2}{r^2},\quad m=0,1,2,\cdots,
\end{equation}
it seems very difficult to estimate $P_0$ through $E_{p,\al_p,\beta_p}$. In Section \ref{secpre}, we
shall investigate the inverse operator of $\cL_m,$ namely, Propositions \ref{lemLm}-\ref{lemLmweightedRiesz}, which will be
 one of the most crucial step to prove Proposition \ref{propEp}.

To overcome the above difficulty,  we write the system \eqref{eqtu0} for $u_0$ in  the Euclidean coordinates:
\begin{equation}\label{eqtuoEuclidean}
\left\{
\begin{split}
&\pa_t u_0+u_0\cdot\nabla u_{0}
-\D u_{0}+\nabla P_{0}
=-\f{\cos\th}2\sum_{k=1}^\infty
\Bigl(\wt u_k\cdot\wt\nabla\ur_{k}-\f{|\vt_{k}|^2}r
-\f{kN}r\vt_k\ur_k\Bigr)\vv e_1\\
&\quad-\f{\sin\th}2\sum_{k=1}^\infty
\Bigl(\wt u_k\cdot\wt\nabla\ur_{k}-\f{|\vt_{k}|^2}r
-\f{kN}r\vt_k\ur_k\Bigr)\vv e_2
-\f12\sum_{k=1}^\infty\Bigl(\wt u_k\cdot\wt\nabla\uz_{k}
-\f{kN}r\vt_k\uz_k\Bigr)\vv e_3,\\
& \dive u_{0}=0,\\
& u_{0}|_{t=0}=0.
\end{split}
\right.
\end{equation}
Then we can handle  the estimate of the pressure function $P_0$ as that of the classical Navier-Stokes equations in
order to
derive the $L^\infty_t(L^3)$ estimate of $u_0.$

\begin{rmk}\label{rmku0}
In the system for $kN$-th
Fourier coefficients \eqref{eqtuk}, the equation for $\vv e_\th$
component contains a pressure term $-\f{kN}r P_{k}$.
Hence if we rewrite the system \eqref{eqtuk} in Euclidean coordinates,
the pressure term is not purely $\nabla P_k$.

Fortunately, the term $-\f{kN}r P_{k}$ vanishes when $k=0$.
This is a subtle  fact that there is no such pressure term
appearing in the $\vt_0$ equation, and thus the pressure term in \eqref{eqtuoEuclidean}
is the standard $\nabla P_0$ in Euclidean coordinates
even for the general initial data \eqref{initialN}.
\end{rmk}

With Proposition \ref{propEp} at hand, we shall handle the {\it a priori} estimate of the solution $u$ of \eqref{eqtu}
through  the estimate of  its Fourier coefficients
by using Plancherel's identity and Hausdorff-Young's inequality.
Finally we close the global {\it a priori} estimates by a standard
 continuity argument.
Precisely, we shall prove the following proposition in Section \ref{secH1}:

\begin{prop}\label{propH1}
{\sl There exists large enough integer $N_0$ so that  for $N>N_0,$
the system \eqref{eqtu} with initial data
\eqref{initialNodd} admits a unique global strong solution
$u\in L^\infty(\R^+,H^1)$ with $\nabla u\in L^2(\R^+,H^1),$
which satisfies
\begin{equation}\label{estipropH1}
\|\nabla u\|_{L^\infty(\R^+;L^2)}^2
+\|\D u\|_{L^2(\R^+;L^2)}^2\leq 2\|\nabla u_{\rm in}\|_{L^2}^2,
\andf\|u\|_{L^5(\R^+;L^5)}\leq C_{\rm in}N^{-\f15},
\end{equation}
where $C_{\rm in}$ is some positive constant depending only
on $\|(a^r,b^\th,a^z)\|_{\cM}$.
}\end{prop}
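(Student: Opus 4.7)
The approach combines a continuity argument driven by Proposition \ref{propEp} (to control all non-axisymmetric Fourier modes) with an independent critical-space estimate for the axisymmetric mean $u_0$, and closes global regularity by Serrin's criterion in the scale-invariant class $L^5_tL^5_x$. First, since $(a^r,b^\theta,a^z)\in\cM$ embeds into $H^1(\R^3)$, classical Fujita--Kato theory gives a unique local strong solution $u\in L^\infty_T(H^1)\cap L^2_T(H^2)$ on some maximal interval $[0,T^*)$; by Theorem \ref{thm3} this solution carries the Fourier decomposition \eqref{solexpan} and the coefficients satisfy \eqref{eqtu0}--\eqref{eqtuk}. Using the embedding $\cM\hookrightarrow L^p$ for $p\in\,]5,6[$ together with the factor $N^{-1}$ in front of $b^\theta$ in \eqref{initialNodd}, a direct computation from \eqref{defEpr}--\eqref{defEpth} yields the smallness of the initial energy, $E_{p,\alpha_p,\beta_p}(0)\leq C_{\rm in} N^{-2\alpha_p}$.

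The plan is to set the bootstrap time
\[ T^{\sharp}\eqdef\sup\bigl\{T\in [0,T^*):\ E_{p,\alpha_p,\beta_p}(T)\leq 4 C_{\rm in} N^{-2\alpha_p},\ \|u_0\|_{L^\infty_T(L^3)}\leq N^{-1/5}\bigr\}, \]
which is positive by continuity, and then to show that both defining inequalities can be strictly improved, so that $T^{\sharp}=T^*$. For any $T<T^{\sharp}$ and $N$ large, the prefactor $\tfrac12-C\|u_0\|_{L^\infty_T(L^3)}$ in \eqref{aprioriEp} is $\geq 1/4$, and the auxiliary quantity $D(T)$ is controlled by $N^{-2}E_{p,\alpha_p,\beta_p}(T)$ via H\"older in the radial variable (comparing $\|r^{-3/2}\wt u_k\|_{L^2}^2$ with the weighted $L^p$ dissipation terms appearing in $E_{p,\alpha_p,\beta_p}$). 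Feeding these bounds back into \eqref{aprioriEp} produces the strict improvement $E_{p,\alpha_p,\beta_p}(T)\leq 2 C_{\rm in} N^{-2\alpha_p}$.

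The main obstacle is improving the second defining inequality of $T^{\sharp}$, namely the $L^\infty_T(L^3)$ bound on $u_0$. The natural cylindrical pressure operator $\cL_0^{-1}$ is too weak to give a critical bound on $P_0$ (cf.\ Remark \ref{rmku0}); the cure is to view $u_0$ as solving the standard three-dimensional incompressible Navier--Stokes system with divergence-free forcing, as in \eqref{eqtuoEuclidean}, and to apply the classical Leray projector $\PP$. A critical-space heat estimate then yields a bound of the form $\|u_0\|_{L^\infty_T(L^3)}\leq CN^{-\alpha_p}$, where the nonlinear forcing on the right-hand side is controlled by the weighted $L^p$ bounds contained in $E_{p,\alpha_p,\beta_p}$ together with the $\cL_m^{-1}$-type inequalities of Section \ref{secpre}, the dangerous factors $kN$ being absorbed by the $(kN)^2$ dissipation in $E^\theta_{p,\alpha_p,\beta_p}$. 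Since \eqref{condialbeta} permits $\alpha_p>1/5$ (take, for instance, $p$ close to $6$ and $\beta_p$ close to $1$), for $N$ large this strictly improves the bound below $N^{-1/5}$.

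With both bootstrap inequalities improved, $T^{\sharp}=T^*$. A Plancherel identity in $\theta$ together with Hausdorff--Young then converts the scale-invariant weighted bounds on the Fourier coefficients of $u$ into
\[ \|u\|_{L^5(0,T^*;L^5(\R^3))}\leq C_{\rm in} N^{-1/5}. \]
Since the pair $(5,5)$ satisfies $\tfrac{2}{5}+\tfrac{3}{5}=1$, this is a Serrin-class smallness bound, which forbids finite-time blow-up of the $H^1$ norm; hence $T^*=+\infty$. Finally the standard $H^1$ energy identity combined with Gr\"onwall and $\|u\|_{L^5(L^5)}^5\lesssim N^{-1}\ll 1$ yields $\|\nabla u\|_{L^\infty(L^2)}^2+\|\Delta u\|_{L^2(L^2)}^2\leq 2\|\nabla u_{\rm in}\|_{L^2}^2$, closing the proof. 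The hard part is Step 3: reconciling the weighted cylindrical bounds of Proposition \ref{propEp} with a Euclidean critical-space argument for $u_0$, since all the standard critical machinery fails for the zero-mode pressure unless one passes through the Cartesian formulation \eqref{eqtuoEuclidean} and the refined inverse-operator estimates of Section \ref{secpre}.
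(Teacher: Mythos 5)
Your overall architecture (local solution plus Fourier decomposition, a two-condition bootstrap, the passage to the Euclidean form \eqref{eqtuoEuclidean} to handle $P_0$, Hausdorff--Young to get $\|u\|_{L^5_t(L^5)}\lesssim N^{-1/5}$, and Gr\"onwall for the $H^1$ bound) matches the paper's strategy, but there is a genuine gap at the point where you close \eqref{aprioriEp}: you claim that $D(T)=\sum_k (kN)^2\bigl\|r^{-3/2}\wt u_k\bigr\|_{L^2_T(L^2)}^2$ is controlled by $N^{-2}E_{p,\al_p,\beta_p}(T)$ ``via H\"older in the radial variable.'' This cannot work. The quantities in \eqref{defEpr}--\eqref{defEpth} are weighted $L^p$ norms with $p\in\,]5,6[$ and \emph{positive} powers of $r$ (e.g. $\bigl\|r^{(p-5)/2}|\wt u_k|^{p/2}\bigr\|_{L^2}^2=\bigl\|r^{1-5/p}\wt u_k\bigr\|_{L^p}^p$), whereas $D$ involves an $L^2$ norm against the strongly singular weight $r^{-3/2}$ on an unbounded domain; any H\"older interpolation from the former to the latter requires a complementary factor of a negative power of $r$ integrated over $\{r>0,z\in\R\}$, which diverges both at the axis and at infinity. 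In the paper, $D$ is \emph{not} estimated through $E_p$ at all: the bootstrap \eqref{4.1} includes the $H^1$ bound $\|\nabla u\|_{L^\infty_T(L^2)}^2+\|\nabla^2 u\|_{L^2_T(L^2)}^2\le 2\|\nabla u_{\rm in}\|_{L^2}^2$, and Plancherel identities in $\th$ applied to $\pa_\th u/r$ and to the Cartesian second derivatives yield the negative-weight bounds \eqref{4.5}--\eqref{4.7}, whose interpolation \eqref{4.8} gives $D(t)\le C_{\rm in}$. Since your bootstrap drops the $H^1$ condition, you have no substitute mechanism for this step, and \eqref{aprioriEp} cannot be closed.

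The same missing ingredient undermines your later steps. The conversion of the $E_p$ bound into unweighted $L^5_t(L^5)$ estimates for the coefficients (the paper's \eqref{4.9}--\eqref{4.10}) is obtained by interpolating the positively weighted $E_p$ quantities against the negatively weighted $L^2$ quantities of \eqref{4.8}; with $E_p$ alone, Hausdorff--Young only yields $r$-weighted $L^p$ information, not $\|u\|_{L^5_t(L^5)}\lesssim N^{-1/5}$. Likewise, your improvement of the $\|u_0\|_{L^\infty_T(L^3)}$ bound is asserted rather than proved: the paper's Lemma \ref{propuL2} performs an $L^3$ energy estimate on \eqref{eqtuoEuclidean} whose forcing terms are bounded using \eqref{4.8}--\eqref{4.10} (again $D$-type quantities), and the resulting smallness is $N^{-1/5}$ coming from the quadratic structure of the forcing, not from choosing $\al_p>1/5$ (indeed the paper takes $\al_p=\f1{30}$). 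To repair the proof you should reinstate the $H^1$ bootstrap condition and derive the negative-weight Plancherel estimates \eqref{4.4}--\eqref{4.8} before invoking Proposition \ref{propEp}.
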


Next we sketch the proof of Theorem \ref{thm1}.

\begin{proof}[{\bf Proof of Theorem \ref{thm1}}]
It is obvious that Proposition \ref{propH1} proves the existence part  of Theorem \ref{thm1}
when the initial data are given by \eqref{initialNodd}. The first inequality of \eqref{S1eqw} in this case
follows from Lemma  \ref{propuL2}. While it follows from \eqref{4.3} and the proof of Proposition \ref{propH1}
that
\begin{equation*}
E_{p,\al_p,\beta_p}(t)
\leq C_{\rm in}N^{-2\al_p},\quad\forall\ t>0,
\end{equation*} for any $\al_p$ satisfying \eqref{condialbeta},
which together with \eqref{defEpr} and  \eqref{defEpth} ensures the second inequality of \eqref{S1eqw} in this special case.

Now let us consider the system \eqref{eqtu} with the initial data given by \eqref{initialN}. We observe  that although there is no symmetry of odevity
in this case, the solution $u$ still keeps $\f{2\pi}{N}$-periodic in $\th$ variable
thanks to Theorem \ref{thm3}{\rm.(ii)}.
Namely, we can write the unique local strong solution as follows:
\begin{equation}\label{solgeneral}
\left\{
\begin{split}
\ur(t,x)=&\ur_{0}(t,r,z)
+\sum_{k=1}^\infty\Bigl(\ur_{k}(t,r,z)\cos kN\th
+\vr_{k}(t,r,z)\sin kN\th\Bigr),\\
\ut(t,x)=&\ut_{0}(t,r,z)+\sum_{k=1}^\infty
\Bigl(\ut_{k}(t,r,z)\cos kN\th
+\vt_{k}(t,r,z)\sin kN\th\Bigr),\\
\uz(t,x)=&\uz_{0}(t,r,z)
+\sum_{k=1}^\infty\Bigl(\uz_{k}(t,r,z)\cos kN\th
+\vz_{k}(t,r,z)\sin kN\th\Bigr).
\end{split}
\right.
\end{equation}

In view of \eqref{solgeneral},
we need to modify the energies in \eqref{defEpr} and \eqref{defEpth},
which is designed for the solution of \eqref{eqtu} with data \eqref{initialNodd},
as follows:
\begin{align*}
\cE_{p,\al_p,\beta_p}^{r,z}(t)\eqdef&
N^{-2\al_p}\Bigl(\bigl\|r^{\f{p-3}2}|(\wt u_1,\wt v_1)|^{\f p2}\bigr\|_{L^\infty_t(L^2)}^2
+\bigl\|r^{\f{p-3}2}|{\wt\nabla}\wt u_1|
|\wt u_1|^{\f p2-1}\bigr\|_{L^2_t(L^2)}^2\\
&+\bigl\|r^{\f{p-3}2}|{\wt\nabla}\wt v_1|
|\wt v_1|^{\f p2-1}\bigr\|_{L^2_t(L^2)}^2
+N^{2}\bigl\|r^{\f{p-5}2}|(\wt u_1,\wt v_1)|^{\f p2}\bigr\|_{L^2_t(L^2)}^2
\Bigr)\\
&+\sum_{k=2}^\infty(kN)^{2\beta_p}\Bigl(
\bigl\|r^{\f{p-3}2}|(\wt u_{k},\wt v_{k})|^{\f p2}\bigr\|_{L^\infty_t(L^2)}^2
+\bigl\|r^{\f{p-3}2}|{\wt\nabla}\wt u_k|
|\wt u_k|^{\f p2-1}\bigr\|_{L^2_t(L^2)}^2\\
&+\bigl\|r^{\f{p-3}2}|{\wt\nabla}\wt v_k|
|\wt v_k|^{\f p2-1}\bigr\|_{L^2_t(L^2)}^2
+(kN)^{2}\bigl\|r^{\f{p-5}2}|(\wt u_{k},\wt v_{k})|^{\f p2}\bigr\|_{L^2_t(L^2)}^2
\Bigr),
\end{align*}
and
\begin{align*}
\cE_{p,\al_p,\beta_p}^\th(t)\eqdef & N^{\f p2-2\al_p}
\Bigl(\bigl\|r^{\f{p-3}2}|(\ut_1,\vt_1)|^{\f p2}\bigr\|
_{L^\infty_t(L^2)}^2
+\bigl\|r^{\f{p-3}2}|{\wt\nabla}\ut_1|
|\ut_1|^{\f p2-1}\bigr\|_{L^2_t(L^2)}^2\\
&+\bigl\|r^{\f{p-3}2}|{\wt\nabla}\vt_1|
|\vt_1|^{\f p2-1}\bigr\|_{L^2_t(L^2)}^2
+N^2\bigl\|r^{\f{p-5}2}|(\ut_1,\vt_1)|^{\f p2}\bigr\|_{L^2_t(L^2)}^2
\Bigr)\\
&+\sum_{k=2}^\infty(kN)^{\f p2+2\beta_p}
\Bigl(\bigl\|r^{\f{p-3}2}|(\ut_{k},\vt_{k})|^{\f p2}\bigr\|_{L^\infty_t(L^2)}^2
+\bigl\|r^{\f{p-3}2}|{\wt\nabla}\ut_k|
|\ut_k|^{\f p2-1}\bigr\|_{L^2_t(L^2)}^2\\
&+\bigl\|r^{\f{p-3}2}|{\wt\nabla}\vt_k|
|\vt_k|^{\f p2-1}\bigr\|_{L^2_t(L^2)}^2
+(kN)^2\bigl\|r^{\f{p-5}2}|(\ut_{k},\vt_{k})|^{\f p2}\bigr\|_{L^2_t(L^2)}^2
\Bigr),
\end{align*} where $\wt v_k\eqdef\vr_k\vv e_r+\vz_k\vv e_z.$
Then along the same line to the proof of Propositions \ref{propEp} and \ref{propH1},
we can prove that the same types of estimates as \eqref{aprioriEp} and \eqref{estipropH1}
still hold for the solution \eqref{solgeneral}, provided that $N$ is sufficiently large.
This indicates that the system
\eqref{NS} with initial data \eqref{initialN} still
has a unique global strong solution. 
We omit the details here.
This completes the proof of Theorem \ref{thm1}.
\end{proof}

Theorem \ref{thm2} follows  from Theorem \ref{thm1}
together with the global well-posedness of Navier-Stokes equations with
axisymmetric data without swirl through a perturbation argument.
The detailed proof of Theorem \ref{thm2} will be presented
at Section \ref{secthm2}.

\section{The estimates of the operator $\bigl(-(\pa_r^2+\f{\pa_r}r
+\pa_z^2)+\f{m^2}{r^2}\bigr)^{-1}$}\label{secpre}

This section is devoted to the study of the following elliptic equation
\begin{equation}\label{a1}
\cL_m P(r,z)\eqdef\Bigl(-\bigl(\pa_r^2+\f{\pa_r}r
+\pa_z^2\bigr)+\f{m^2}{r^2}\Bigr)P(r,z)=a(r,z),\quad m=0,1,2,\cdots,
\end{equation}
 in the half-plane $\{(r,z):r > 0, z\in\R\},$ which will be used in the estimate of
 the pressure functions in the system \eqref{eqtuk}.

  Recalling that in the cylindrical coordinates, $ \D=\p_r^2+\f{\p_r}{r}+\f{\p_\theta^2}{r^2}+\p_z^2,$
 $P(r,z)\cos m\th$ verifies
$$-\D\bigl(P(r,z)\cos m\th\bigr)=a(r,z)\cos m\th.$$
Noticing that for $x=(r\cos\th, r\sin\th, z)$ and $\bar{x}=(\bar{r}\cos\bar{\th}, \bar{r}\sin\bar{\th}, \bar{z}),$ one has
$$|x-\bar{x}|^2=\left(r-\bar r\right)^2+\left(z-\bar z\right)^2+2r\bar r\bigl(1-\cos(\th-\bar\th)\bigr),$$
then we  get, by using the fundamental solution of $3$-D
Laplace's equation, that
$$P(r,z)\cos m\th=\f1{4\pi}\int_{\R^+\times\R}\int_0^{2\pi}
\f{a(\bar r,\bar z)\cos m\bar\th}
{\sqrt{\left(r-\bar r\right)^2+\left(z-\bar z\right)^2+2r\bar r\bigl(1-\cos(\th-\bar\th)\bigr)}}
\bar r\,d\bar\th\, d\bar r\,d\bar z.$$
Taking $\th=0$ in the above equality yields
\begin{equation}\label{a2}
\begin{split}
&P(r,z)=\f{1}{2\pi}\int_{\R^+\times\R}F_m(\xi^2)
 a(\bar r,\bar z)\sqrt{\f{\bar r}{r}}\,d\bar rd\bar z \with\\
F_m(s)\eqdef&\int_0^\pi\f{\cos m\th}{\sqrt{s+2(1-\cos\th)}}\,d\th
\andf \xi^2\eqdefa\f{\left(r-\bar r\right)^2+\left(z-\bar z\right)^2}{r\bar r}.
\end{split}
\end{equation}

It is very important to investigate the asymptotic
 behaviors of $F_m(s)$ as $s\to 0$ and $s\to\infty.$

\begin{lem}\label{propa1}
{\sl For any $m\geq0$, there exists a uniform constant $C$ so that
for any $0<\al\leq m+\f12,~1\leq\beta\leq m+\f32,
~2\leq\gamma\leq m+\f52$, and any $s\in\R^+$,
there hold
\begin{equation}\label{a3}
s^\al |F_m(s)|\leq C \al^{-1}\ln(2+m)+4^m\pi \andf
s^\beta |F_m'(s)|+s^\gamma |F_m''(s)|\leq C 4^m.
\end{equation}
}\end{lem}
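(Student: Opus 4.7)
\medskip

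The plan is to split the analysis into the regimes $s\le 1$ and $s\ge 1$, since $F_m$ has only a mild (logarithmic) singularity at $s=0$ but polynomial decay at $s=\infty$ whose rate is governed by Fourier orthogonality with respect to the mode $m$.

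\medskip

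For $s\le 1$, the trivial majorization $|F_m(s)|\le F_0(s)$ (from $|\cos m\th|\le 1$ and positivity of the $m=0$ integrand) together with $2(1-\cos\th)\ge \f{4\th^2}{\pi^2}$ on $[0,\pi]$ yields, after an explicit change of variables, $F_0(s)\le C(1+\ln(1/s))$. The elementary extremum $\sup_{s\in(0,1]}s^{\al}\ln(1/s)=(e\al)^{-1}$ then gives $s^{\al}|F_m(s)|\le C\al^{-1}$, which is dominated by $C\al^{-1}\ln(2+m)$ since $\ln(2+m)\ge\ln 2$. Differentiating under the integral sign and again using $2(1-\cos\th)\gtrsim \th^2$,
\begin{equation*}
|F_m'(s)|\lesssim \int_0^\infty (s+\tau^2)^{-3/2}\,d\tau=\f{1}{s},\qquad |F_m''(s)|\lesssim \f{1}{s^2},
\end{equation*}
which are precisely balanced by the weights $s^\beta$, $s^\gamma$ with $\beta\ge 1,\gamma\ge 2$.

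\medskip

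For $s\ge 1$, the decay $|F_m(s)|\lesssim 2^m(s+2)^{-m-1/2}$ will come from the Taylor expansion
\begin{equation*}
(s+2-2\cos\th)^{-1/2}=(s+2)^{-1/2}\sum_{n=0}^{\infty}\binom{2n}{n}\f{\cos^n\th}{2^n(s+2)^n},
\end{equation*}
which converges since $\f{2}{s+2}\le \f{2}{3}<1$. Integrating term-by-term against $\cos m\th$ on $[-\pi,\pi]$, the crucial input is the Fourier orthogonality
\begin{equation*}
\int_{-\pi}^{\pi}\cos m\th\cdot\cos^n\th\,d\th=0\quad \text{whenever }n<m,
\end{equation*}
because $\cos^n\th$ has spectral support in $\{-n,\ldots,n\}$. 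Bounding $\binom{2n}{n}/2^n\le 2^n$ and the surviving integrals trivially by $\pi$, the resulting geometric tail starting at $n=m$ yields $|F_m(s)|\lesssim 2^m(s+2)^{-m-1/2}$, so $s^{\al}|F_m(s)|\le C\cdot 2^m\le 4^m\pi$ as soon as $\al\le m+\f12$. The same Taylor-plus-orthogonality scheme applied to $(s+2-2\cos\th)^{-3/2}$ and $(s+2-2\cos\th)^{-5/2}$—whose $n$-th coefficients in the $y=2\cos\th/(s+2)$ expansion are of order $\sqrt{n}\cdot 2^n$ and $n^{3/2}\cdot 2^n$ respectively—delivers $|F_m'(s)|\lesssim \sqrt{m}\,2^m(s+2)^{-m-3/2}$ and $|F_m''(s)|\lesssim m^{3/2}\,2^m(s+2)^{-m-5/2}$, whence $s^\beta|F_m'(s)|+s^\gamma|F_m''(s)|\le C\cdot 4^m$ (using $m^{3/2}2^m\le 4^m$).

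\medskip

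The main technical obstacle is the careful bookkeeping of the Taylor coefficients, ensuring that they remain of order $\mathrm{poly}(n)\cdot 2^n$ rather than the naive $4^n$ one would get from $|\cos\th|\le 1$ before summation; otherwise the geometric tail would collapse to a harmless constant instead of supplying the crucial factor $(s+2)^{-m}$. Once this is in hand, the restrictions $\al\le m+\f12$, $\beta\le m+\f32$, $\gamma\le m+\f52$ are exactly what is needed to absorb the weights $s^\al$, $s^\beta$, $s^\gamma$ into the respective decays $(s+2)^{-m-1/2}$, $(s+2)^{-m-3/2}$, $(s+2)^{-m-5/2}$, and the small-$s$ and large-$s$ estimates match up at $s=1$ with only universal constants.
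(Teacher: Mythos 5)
Your proof is correct in substance and is genuinely different from, and in places simpler than, the paper's argument.

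For the small-$s$ regime ($s\le 1$), the paper writes $F_m(s)=\cC_1+\cC_2+\cC_3$ and extracts the $\ln(2+m)$ growth by computing $\cC_2$, $\cC_3$ explicitly and studying the auxiliary function $g_s(m)$ and its derivative $g_s'(m)$. You instead use the crude majorization $|F_m(s)|\le F_0(s)\lesssim 1+\ln(1/s)$ followed by the elementary extremum $\sup_{0<s\le 1}s^{\al}\ln(1/s)=(e\al)^{-1}$; this is much shorter, and in fact it gives $s^\al|F_m(s)|\lesssim \al^{-1}+1$ with no $\ln(2+m)$ factor at all (the $+1$ is absorbed: under $\al\le m+\f12$, either $\al^{-1}\ln(2+m)\gtrsim 1$ or $4^m\pi$ dominates). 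The derivative bounds $|F_m'(s)|\lesssim 1/s$, $|F_m''(s)|\lesssim 1/s^2$ for small $s$ are likewise correct and immediate.

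For the large-$s$ regime, both you and the paper combine a Taylor expansion with the orthogonality $\int_0^\pi\cos m\th\,\cos^j\th\,d\th=0$ for $j<m$, but the expansion centers differ. The paper expands $(1+2(1-\cos\th)/s)^{-1/2}$ with an order-$m$ Taylor remainder, valid only for $s>4$, and treats $s\in[\ve,4]$ separately. You expand $(1-y)^{-1/2}$ in $y=2\cos\th/(s+2)$, which converges for all $s>0$ (with ratio $\le 2/3$ when $s\ge1$), and sum a geometric tail; the coefficient estimates ($\sqrt n\,2^n$ and $n^{3/2}2^n$ for the $-3/2$ and $-5/2$ powers, and $m^{3/2}2^m\le 4^m$) are right. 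One small inaccuracy: the geometric-series constant produces a bound of the form $C\,2^m\,(s+2)/s\le 3\pi\,2^m$ rather than literally $\le 4^m\pi$, and for $m\in\{0,1\}$ the claim ``$C\cdot 2^m\le 4^m\pi$'' as written is false unless $C\le\pi$. This is benign, since $\al\le m+\f12$ forces $\al^{-1}\ge(m+\f12)^{-1}$ and the excess constant is absorbed into $C\al^{-1}\ln(2+m)$, but you should state this explicitly rather than assert the inequality directly. Also, the integral in $F_m$ is over $[0,\pi]$, not $[-\pi,\pi]$; the evenness of the integrand makes this cosmetic, but it is worth matching the paper's normalization.
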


It is worth mentioning that the estimates \eqref{a3} are accurate in
describing the asymptotic behaviors of $F_m(s),$  $F_m'(s)$ and  $F_m''(s)$ as $s$ tends to $0$ or $+\infty$.
Yet the disadvantage of the estimates \eqref{a3} lies in the fact that the right-hand sides of the estimates
grow too rapidly as $m\rightarrow\infty$, which will be a disaster
in our proof of Theorem \ref{thm1}. To overcome this difficulty,
we also need to investigate the asymptotic behaviors of $F_m(s)$
as $m\rightarrow\infty$.

\begin{lem}\label{propa2}
{\sl Let $m\geq3$. There exists a uniform constant $C$ so that there hold

\noindent{\rm (i)} for any $1\leq\al\leq \f72,~2\leq\beta\leq \f92,~3\leq\gamma\leq \f{11}2$,
and any $s\in\R^+$,
\begin{equation}\label{a8}
s^\al |F_m(s)|\leq \f{C}m,\andf
s^\beta |F_m'(s)|+s^\gamma |F_m''(s)|\leq \f{C}m,
\end{equation}
{\rm (ii)} for any $0<\bar\al\leq \f52,~1\leq\bar\beta\leq \f72,
~2\leq\bar\gamma\leq \f92$ and any $s\in\R^+,$
\begin{equation}\label{a9}
s^{\bar\al} |F_m(s)|\leq C\bar\al^{-1}\ln m,\andf
s^{\bar\beta} |F_m'(s)|+s^{\bar\gamma} |F_m''(s)|\leq C\ln m.
\end{equation}
}\end{lem}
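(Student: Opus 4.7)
The strategy is to exploit the oscillation of $\cos(m\theta)$ via integration by parts in $\theta$, supplemented by an explicit cosine Fourier expansion of $g(s,\theta):=(s+2(1-\cos\theta))^{-1/2}$ for the large-$s$ regime. The key structural observation is that $g$ is even about both endpoints $\theta=0$ and $\theta=\pi$, so $\partial_\theta^{2j+1}g$ vanishes there; combined with $\sin(m\theta)|_{\theta=0,\pi}=0$, every integration by parts yields no boundary contribution, and consequently
\beno
F_m(s) = \frac{(-1)^k}{m^{2k}}\int_0^\pi \cos(m\theta)\,\partial_\theta^{2k}g(s,\theta)\,d\theta
\eeno
for every integer $k\geq 0$. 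The next input is a pointwise bound on $|\partial_\theta^k g|$ obtained from Fa\`{a} di Bruno applied to $u\mapsto u^{-1/2}$ composed with $h:=s+2(1-\cos\theta)$; using $|h^{(j)}|\leq 2$, $\sin^2\theta\leq h$, and $h\geq c(s+\theta^2)$ on $[0,\pi]$, one gets $|\partial_\theta^k g|\leq C_k(s+\theta^2)^{-(k+1)/2}$ when $s+\theta^2\leq 1$ and $|\partial_\theta^k g|\leq C_k(s+\theta^2)^{-3/2}$ when $s+\theta^2\geq 1$.

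Inserting $k=2$ together with $\int_0^\pi(s+\theta^2)^{-3/2}d\theta = \pi/(s\sqrt{s+\pi^2})$ gives $|F_m(s)|\leq Cm^{-2}/(s\sqrt{s+1})$, which already establishes (i) for $\alpha\in[1,3/2]$ and the moderate-$s$ portion of (ii). To push $\alpha$ up to $7/2$ one must gain decay in $s$ beyond $s^{-3/2}$, which IBP alone cannot achieve because the contribution $h^{-3/2}$ in $\partial_\theta^k g$ saturates regardless of $k$. I would complement with the expansion
\beno
g(s,\theta) = (s+2)^{-1/2}\sum_{n\geq 0} b_n\Bigl(\frac{2\cos\theta}{s+2}\Bigr)^n,\qquad b_n=(-1)^n\binom{-1/2}{n},
\eeno
together with the orthogonality $\int_0^\pi\cos(m\theta)\cos^n\theta\,d\theta=0$ for $0\leq n<m$, which picks out only terms with $n\geq m$ and yields $|F_m(s)|\leq C(s+2)^{-m-1/2}/\sqrt m$. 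Since $m\geq 3$ forces $\alpha\leq 7/2\leq m+1/2$, this gives $s^\alpha|F_m(s)|\leq C/m$ for large $s$ and, pieced together with the IBP bound on the moderate-$s$ range, completes (i).

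For (ii), the logarithmic loss $\ln m$ and the factor $\bar\alpha^{-1}$ arise from splitting $[0,\pi]=[0,1/m]\cup[1/m,\pi]$: on the first piece, $\int_0^{1/m}(s+\theta^2)^{-1/2}d\theta\lesssim\ln(1+1/(m\sqrt s))$ combined with the elementary inequality $\sup_{s>0}s^{\bar\alpha}\ln(1+1/s)\leq C/\bar\alpha$ gives the desired bound, while on the second piece IBP is applied with boundary terms at $\theta=1/m$ controlled by $|\partial_\theta^j g(s,1/m)|\leq Cm^{j+1}$. The derivative estimates $s^\beta|F_m'|$ and $s^\gamma|F_m''|$ follow by differentiating under the integral: $\partial_s g=-\frac12 h^{-3/2}$ and $\partial_s^2 g=\frac34 h^{-5/2}$ shift each $s$-weight by one or two and recover the shifted ranges of $\beta$ and $\gamma$. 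The main obstacle is the large-$s$ regime for higher $\alpha$: pure integration by parts fails to extract the $m$-fold Fourier cancellation inherent in $\int_0^\pi\cos(m\theta)\cdot(\text{low-frequency})\,d\theta=0$, and the explicit cosine expansion of $g$ is required to obtain the needed $s^{-m-1/2}$ decay.
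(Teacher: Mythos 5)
Your overall plan—integrate by parts in $\theta$ to gain powers of $m$, then exploit orthogonality of $\cos m\theta$ against low-frequency cosines for the large-$s$ decay—is the right idea and is what the paper does, but there are two concrete problems in the execution.

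First, the bound you claim from the cosine expansion, $|F_m(s)|\le C(s+2)^{-m-1/2}/\sqrt m$, does not actually hold for all $s>4$: the surviving terms in the series have coefficients $\sim\binom{m+2j}{j}(s+2)^{-m-2j}/\sqrt{m+2j}$, and the sum over $j$ behaves like $(s+2)^{-m}\sum_j (cm)^j/(j!(s+2)^{2j})=(s+2)^{-m}e^{cm/(s+2)^2}$, which is uniformly controlled only once $s\gtrsim\sqrt m$. For $4<s\lesssim\sqrt m$ the stated bound fails; one must instead invoke the IBP estimate there and match at $s\sim\sqrt m$, which you gesture at (``pieced together'') but do not establish. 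Second, and more to the point, the full $m$-fold cancellation is unnecessary, and insisting on it creates exactly this matching headache. After a single integration by parts the paper rewrites $F_m(s)=m^{-1}G_m(s)$ with $G_m(s)=\frac12\int_0^\pi\bigl(\cos(m-1)\theta-\cos(m+1)\theta\bigr)\,(s+2(1-\cos\theta))^{-3/2}d\theta$. For $s>4$ one Taylor-expands $(1+2(1-\cos\theta)/s)^{-3/2}=1-\tfrac{3(1-\cos\theta)}s+O(s^{-2})$; since $m\ge3$ makes $\cos(m\pm1)\theta$ orthogonal to both $1$ and $\cos\theta$, only the $O(s^{-2})$ remainder survives, giving $|G_m(s)|\lesssim s^{-7/2}$ and hence $|F_m(s)|\lesssim m^{-1}s^{-7/2}$ directly, valid for all $s>4$ with no convergence or matching issues. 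Only two orders of cancellation are needed because $\alpha\le 7/2$, not $m$. The small-$s$ range is then a direct pointwise bound $|sG_m(s)|\le 2\pi$.

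For part (ii), the paper takes the same two-term Taylor route on $(1+2(1-\cos\theta)/s)^{-1/2}$ (no IBP) to get $|s^{5/2}F_m(s)|\le C$ for $s>4$, and the $\ln m$ factor is inherited verbatim from the small-$s$ estimate \eqref{a6} already proved in Lemma \ref{propa1} via the explicit decomposition of $F_m$; it does not need to be re-derived. Your proposal of splitting $[0,\pi]$ at $1/m$ and controlling boundary terms is not obviously wrong, but the key inequality you invoke, $\sup_{s>0}s^{\bar\alpha}\ln(1+1/s)\le C/\bar\alpha$, is false for $\bar\alpha>1$ (the left side diverges as $s\to\infty$); you presumably mean a restricted supremum, and in any case the argument as sketched is not rigorous. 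The derivative estimates on $F_m'$, $F_m''$ do indeed follow the same pattern with $h^{-3/2}$ and $h^{-5/2}$ in place of $h^{-1/2}$, as you say.
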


We shall  postpone
the proof of Lemmas \ref{propa1} and \ref{propa2} in the Appendix \ref{sectA}.

\begin{cor}\label{corFm}
{\sl For any $m\geq3$ and any $0<\delta'<\delta<1$,
there exists some positive constant $C$ depending only on
the choices of $\delta$ and $\delta'$ such that
\begin{equation}\label{a14}
s^{\delta} |F_m(s)|+
s^{1+\delta} |F_m'(s)|+s^{2+\delta}|F_m''(s)|\leq Cm^{-\delta'}.
\end{equation}
}\end{cor}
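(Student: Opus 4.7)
The plan is a straightforward geometric-mean interpolation between the two estimates in Lemma \ref{propa2}. Part (i) of that lemma gives the fast decay $1/m$ but demands relatively high powers of $s$, while part (ii) accepts lower powers of $s$ at the price of a factor $\ln m$. Since the exponents $\delta,\,1+\delta,\,2+\delta$ appearing in \eqref{a14} lie strictly below the respective ranges covered by part (i), there is enough room to trade a fraction of the logarithmic growth for algebraic decay in $m$; the slack $\delta-\delta'>0$ is exactly what makes this trade profitable.

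For the $F_m$ term I would first choose $\bar\alpha\in(0,(\delta-\delta')/(1-\delta'))$, so that
$$\lambda \eqdef \frac{\delta-\bar\alpha}{1-\bar\alpha}$$
satisfies $\delta'<\lambda<\delta<1$. Using Lemma \ref{propa2}(i) with $\alpha=1$, i.e.\ $|F_m(s)|\le Cm^{-1}s^{-1}$, together with Lemma \ref{propa2}(ii), i.e.\ $|F_m(s)|\le C_{\bar\alpha}(\ln m)s^{-\bar\alpha}$, I would form the geometric mean
$$|F_m(s)|=|F_m(s)|^{\lambda}|F_m(s)|^{1-\lambda}\le C_{\bar\alpha}(\ln m)^{1-\lambda}m^{-\lambda}s^{-\lambda-\bar\alpha(1-\lambda)}.$$
The choice of $\lambda$ makes the $s$-exponent equal to $-\delta$, and since $\lambda>\delta'$ one has $(\ln m)^{1-\lambda}m^{-\lambda}\le C_{\delta,\delta'}m^{-\delta'}$, which yields $s^{\delta}|F_m(s)|\le Cm^{-\delta'}$.

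The two derivative pieces are slightly easier because the endpoint exponents in parts (i)–(ii) already differ by one. For $F_m'$ I would pair $\beta=2$ from (i) with $\bar\beta=1$ from (ii) and interpolate with parameter $\lambda=\delta\in(\delta',1)$; the combined estimate is
$$|F_m'(s)|\le C(\ln m)^{1-\delta}m^{-\delta}s^{-(1+\delta)},$$
so $s^{1+\delta}|F_m'(s)|\le C(\ln m)^{1-\delta}m^{-\delta}\le C_{\delta,\delta'}m^{-\delta'}$. An identical argument with $\gamma=3$ and $\bar\gamma=2$ produces the same bound for $s^{2+\delta}|F_m''(s)|$. Summing the three inequalities gives \eqref{a14}.

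The only real subtlety lies in the $F_m$ term itself: Lemma \ref{propa2}(ii) requires $\bar\alpha>0$, so one cannot simply take $\lambda=\delta$ as in the derivative cases but must introduce a strictly positive spacing $\bar\alpha$ below $\delta$. This is precisely what forces the constant in \eqref{a14} to depend on both $\delta$ and $\delta'$, and it explains the built-in degeneration as $\delta'\to\delta$. Beyond this minor bookkeeping the argument is elementary, and it requires nothing from Lemma \ref{propa1}, since the regime $m\ge 3$ is already covered by Lemma \ref{propa2}.
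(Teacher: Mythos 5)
Your proof is correct and follows essentially the same route as the paper: interpolating geometrically between the $m^{-1}$ bounds of Lemma \ref{propa2}(i) (taken at the low endpoints $\al=1$, $\beta=2$, $\gamma=3$) and the $\ln m$ bounds of Lemma \ref{propa2}(ii), with the interpolation exponent chosen so that the $s$-power equals $\delta$ (resp. $1+\delta$, $2+\delta$) and exceeds $\delta'$ in the $m$-decay. Your explicit choice $\bar\al<(\delta-\delta')/(1-\delta')$ and the observation that the derivative terms allow the simpler choice $\lambda=\delta$ are exactly the details the paper leaves implicit.
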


\begin{proof} Indeed it follows from \eqref{a8} and \eqref{a9} that
\beno
|F_m(s)|\leq C(ms)^{-1} \andf |F_m(s)|\leq C\bar{\al}^{-1}s^{-\bar{\al}}\ln m \quad \forall \ \bar{\al}\in (0,1),
\eeno
so that for any $\tau\in (0,1),$ one has
\begin{align*}
|F_m(s)|\leq C\bar{\al}^{-1+\tau} m^{-\tau}\left(\ln m\right)^{1-\tau} s^{-\left(\tau+\bar{\al}(1-\tau)\right)}.
\end{align*}
Taking $\bar{\al}\in (0,\delta)$ and $\tau=\f{\delta-\bar{\al}}{1-\bar{\al}}$ in the above inequality leads to the estimate of $F_m(s)$ in \eqref{a14}. The estimates of $F_m'(s)$ and $F_m''(s)$ can be derived along the same line.
\end{proof}

Let us now turn to the
estimates of the inverse operator of   $\cL_m$ given by \eqref{a1}.

\begin{prop}\label{lemLm}
{\sl Let $m\geq3$, $1<q<p<\infty$, and  $\al,~\beta$ satisfy
\begin{equation}\label{a15}
\al+\beta>0,\quad
\al+\f1p\geq-3,\quad\beta-\f1q\geq-2,\andf\f1q=\f1p+\f{\al+\beta}3.
\end{equation}
Then for any axisymmetric function $f(r,z)$ and any sufficiently small $\e>0,$ one has
\begin{equation}\label{a16}
\bigl\|r^{\al}\cL_m^{-1}\bigl(r^{\beta-2} f\bigr)\bigr\|_{L^p}
\lesssim_\e m^{-\left(1+\f1p-\f1q-\e\right)}\|f\|_{L^q}.
\end{equation}
Furthermore, if $p$ and $q$ satisfy in addition that
$\f12+\f1p-\f1q>0,$
 there holds
\begin{equation}\label{a17}
\bigl\|r^{\al}\cL_m^{-1}\bigl(\f1r\wt\nabla(r^\beta f)\bigr)\bigr\|_{L^p}
+\bigl\|r^{\al}\cL_m^{-1}\wt\nabla\bigl(r^{\beta-1} f\bigr)\bigr\|_{L^p}
\lesssim_\e m^{-\left(\f12+\f1p-\f1q-\e\right)}\|f\|_{L^q}.
\end{equation}
}\end{prop}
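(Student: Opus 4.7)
The plan is to deduce both inequalities from the explicit Green's-function representation \eqref{a2} for $\cL_m^{-1}$ combined with the pointwise bounds on $F_m$ in Corollary \ref{corFm}, thereby reducing each statement to a weighted Hardy-Littlewood-Sobolev (Stein-Weiss type) inequality on the half-plane $\{(r,z):r>0\}$ with cylindrical weight. The decay in $m$ enters only through Corollary \ref{corFm}, so the parameter $\delta$ there should be chosen to match the HLS-critical exponent for the desired $L^q\to L^p$ mapping.

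For \eqref{a16}, I would apply \eqref{a2} with $a(r,z)=r^{\beta-2}f(r,z)$ and, for any small $\e>0$, pick $\delta=1+\f1p-\f1q$ together with $\delta'=\delta-\e$; the hypothesis $1<q<p<\infty$ places $\delta\in(0,1)$, so Corollary \ref{corFm} yields $|F_m(\xi^2)|\le C_\e m^{-\delta'}\xi^{-2\delta}$. Writing $\xi^{-2\delta}=(r\bar r)^\delta\bigl((r-\bar r)^2+(z-\bar z)^2\bigr)^{-\delta}$ and gathering the $r$-powers, this produces the pointwise bound
\begin{equation*}
\bigl|r^\al\cL_m^{-1}(r^{\beta-2}f)(r,z)\bigr|\lesssim_\e m^{-(1+\f1p-\f1q-\e)}\,r^{\al+\delta-\f12}\int\frac{\bar r^{\beta+\delta-\f32}|f(\bar r,\bar z)|}{\bigl((r-\bar r)^2+(z-\bar z)^2\bigr)^\delta}\,d\bar r\,d\bar z.
\end{equation*}
Taking the $L^p(\R^3)=L^p(2\pi r\,dr\,dz)$ norm then reduces \eqref{a16} to a weighted Riesz-potential inequality of the form
\begin{equation*}
\bigl\|r^{\al+\delta-\f12}\,I_{2-2\delta}\bigl(\bar r^{\beta+\delta-\f32}g\bigr)\bigr\|_{L^p(r\,dr\,dz)}\lesssim\|g\|_{L^q(r\,dr\,dz)},
\end{equation*}
where $I_{2-2\delta}$ denotes convolution with $\bigl((r-\bar r)^2+(z-\bar z)^2\bigr)^{-\delta}$. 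The scaling $\f1q=\f1p+\f{\al+\beta}3$ makes this inequality dimensionally balanced at HLS-critical scaling on $\R^3$, and the endpoint conditions $\al+\f1p\geq-3$, $\beta-\f1q\geq-2$ guarantee admissibility of the cylindrical weights as $r\to 0$ and $r\to\infty$; this weighted estimate is exactly the content of Proposition \ref{lemLmweightedRiesz}, which I would invoke to close the argument.

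The estimate \eqref{a17} follows by the same scheme after integrating $\wt\nabla$ by parts onto the kernel in \eqref{a2}. Differentiation of $F_m(\xi^2)\sqrt{\bar r/r}$ produces the leading contribution $F_m'(\xi^2)\,\pa\xi^2\cdot\sqrt{\bar r/r}$ plus a manifestly lower-order term coming from $\pa(\sqrt{\bar r/r})$. Using the bound $s^{1+\delta}|F_m'(s)|\lesssim m^{-\delta'}$ from Corollary \ref{corFm} together with $|\pa_{\bar r}\xi^2|+|\pa_{\bar z}\xi^2|\lesssim |(r-\bar r,z-\bar z)|/(r\bar r)$, the leading term yields a kernel with enhanced singularity $\bigl((r-\bar r)^2+(z-\bar z)^2\bigr)^{-(\delta+\f12)}$. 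I would now choose $\delta=\f12+\f1p-\f1q$ (which lies in $(0,1)$ thanks to the additional hypothesis $\f12+\f1p-\f1q>0$) and $\delta'=\delta-\e$, so that the exponent $\delta+\f12$ in the new kernel is again HLS-critical, and a second appeal to Proposition \ref{lemLmweightedRiesz} with this shifted exponent gives the claimed factor $m^{-(\f12+\f1p-\f1q-\e)}$. The lower-order piece has a strictly weaker singularity and is absorbed by the same inequality.

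The chief obstacle is proving the weighted Stein-Weiss inequality itself, since the weight $r^{\al+\delta-\f12}$, $\bar r^{\beta+\delta-\f32}$ depends on only one of the two half-plane coordinates; radial rearrangement and the classical $|x|$-weighted Stein-Weiss inequality on $\R^2$ do not apply directly. I would establish it separately as Proposition \ref{lemLmweightedRiesz} by lifting to axisymmetric functions on $\R^3$: the cylindrical-radius weight $r$ then becomes a bona fide radial-cylindrical weight and the $(r,z)$-convolution becomes a genuine axisymmetric convolution on $\R^3$, to which the Stein-Weiss inequality in $\R^3$ can be applied. This lifting also clarifies why the exponent relation in \eqref{a15} carries the 3-D factor $(\al+\beta)/3$ rather than the naive 2-D factor $(\al+\beta)/2$.
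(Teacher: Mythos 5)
Your starting point (the representation \eqref{a2} plus decay of $F_m$) is the paper's, but the way you dispose of the weights contains a fatal loss. By invoking only Corollary \ref{corFm}, i.e. $|F_m(\xi^2)|\lesssim_\e m^{-\delta'}\xi^{-2\delta}$ with $\delta<1$, uniformly in $(\bar r,\bar z)$, you discard the much stronger decay of $F_m$ at infinity (Lemma \ref{propa2} gives up to $\xi^{-7}$, Lemma \ref{propa1} even $\xi^{-(2m+1)}$), and the $r$-weights then survive into your reduced inequality. Writing $F=\bar r^{1/q}f$ and passing to Lebesgue measure on the half-plane, the scaling relation in \eqref{a15} forces the two weight exponents to be opposite, so what you need is $\bigl\|r^{\sigma}\int\bar r^{-\sigma}\bigl((r-\bar r)^2+(z-\bar z)^2\bigr)^{-\delta}F\,d\bar r\,d\bar z\bigr\|_{L^p(dr\,dz)}\lesssim\|F\|_{L^q(d\bar r\,d\bar z)}$ with $\delta=1+\f1p-\f1q$ and $\sigma=\al+\f12+\f2p-\f1q$. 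This inequality is false in part of the admissible range: take $F=\mathbf{1}_{[1,2]\times[0,1]}$; for $r$ large and $z\in[0,1]$ the left-hand integrand is $\thicksim r^{\sigma-2\delta}$, which is not $L^p$ in $r$ as soon as $\sigma\geq2\delta-\f1p$, i.e. $\al\geq\f32-\f1p-\f1q$. Such $\al$ are allowed by \eqref{a15}, since $\beta-\f1q\geq-2$ only caps $\al$ at $2+\f2q-\f3p>\f32-\f1p-\f1q$, and values of this size actually occur in the later applications (e.g. $\al=2-\f5p$ in \eqref{3.23}). So no proof of your weighted Stein--Weiss step, by lifting or otherwise, can close the argument; the problem sits exactly in the off-diagonal regions $\bar r\ll r$ and $\bar r\gg r$, where $\xi^2\gtrsim1$ and the kernel really decays much faster than $\xi^{-2\delta}$.

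This is precisely where the paper's proof does something different: it splits into $\{\bar r<\f r2\}$, $\{\bar r>2r\}$ and $\{\f r2\leq\bar r\leq2r\}$, and in the two off-diagonal regions uses the full family of decay rates of Lemma \ref{propa2}, with the exponent chosen according to the sign of $\beta-\f1q-\f12$ (resp. $\al+\f1p+\f12$) --- this is exactly where the hypotheses $\al+\f1p\geq-3$ and $\beta-\f1q\geq-2$ are consumed --- so that the weights $r^{\al+\f1p-\f12}\bar r^{\beta-\f1q-\f32}$ are absorbed into a pure power of the distance; Corollary \ref{corFm} is used only on the diagonal region, where $r\thicksim\bar r$ and the weights collapse to $r^{-2(1+\f1p-\f1q)}$. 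The outcome is the unweighted kernel bounds \eqref{a21}--\eqref{a22}, and plain two-dimensional Hardy--Littlewood--Sobolev finishes. Two further problems with your plan: Proposition \ref{lemLmweightedRiesz} is not the weighted Riesz-potential inequality you describe --- it concerns $r^\gamma\cL_m^{-1}\wt\nabla\wt\nabla f$ and its proof uses Proposition \ref{lemLm}, so invoking it here would be circular in any case; and the lifting to $\R^3$ would not rescue the weighted estimate even where it is true, because the weight $r$ is the cylindrical radius rather than $|x|$ (so classical Stein--Weiss does not apply), and the planar kernel $\bigl((r-\bar r)^2+(z-\bar z)^2\bigr)^{-\delta}$ is not dominated, after integration in $\bar\th$, by a genuine 3-D Riesz potential: the elementary comparison $|x-\bar x|\geq\bigl((r-\bar r)^2+(z-\bar z)^2\bigr)^{1/2}$ goes the wrong way near the diagonal. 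Your treatment of \eqref{a17} by integrating $\wt\nabla$ by parts onto the kernel does match the paper's first step, but it inherits the same weight problem.
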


\begin{proof}
Since
$$\pa_z\bigl(r^{\beta-1} f\bigr)=\f1r\pa_z(r^\beta f),\andf
\pa_r\bigl(r^{\beta-1} f\bigr)=\f1r\pa_r(r^\beta f)
-r^{\beta-2} f,$$
it suffices to prove \eqref{a16} and
\begin{equation}\label{a18}
\bigl\|r^{\al}\cL_m^{-1}\bigl(\f1r\wt\nabla(r^\beta f)\bigr)\bigr\|_{L^p}
\lesssim_\e m^{-\left(\f12+\f1p-\f1q-\e\right)}\|f\|_{L^q}\quad\mbox{if}\ \ \f12+\f1p-\f1q>0.
\end{equation}

In fact, it follows from  \eqref{a2} that
\begin{align*}
r^{\al+\f1p}\cL_m^{-1}\bigl(r^{\beta-2} f\bigr)
=\f{r^{\al+\f1p-\f12}}{2\pi}\int_{\R^+\times\R} F_m(\xi^2)
 {\bar r}^{\beta-\f32} f(\bar r,\bar z)\,d\bar rd\bar z,
\end{align*}
and we get, by using integration by parts, that
\footnote{For any fixed $r>0,~z\in\R$, there holds $\xi^2\rightarrow+\infty$
whenever $\bar r\rightarrow0^+$ or $\bar r\rightarrow+\infty$ or $\bar z\rightarrow\infty$.
On the other hand, it follows from Lemma \ref{propa1}  that $F(\xi^2)$
decays at least as $\xi^{-(2m+1)}$ at infinity, so that
 no boundary term appears in the process of  integration by parts.}
\begin{align*}
r^{\al+\f1p}\cL_m^{-1}\Bigl(\f1r(\pa_r,\pa_z)(r^\beta f)\Bigr)
=-\f{r^{\al+\f1p-\f12}}{2\pi}\int_{\R^+\times\R}(\pa_{\bar r},\pa_{\bar z})
\left( F_m(\xi^2)\bar{r}^{-\f12}\right)
\cdot {\bar r}^{\beta} f(\bar r,\bar z)\,d\bar rd\bar z,
\end{align*}
where $\xi^2=\f{(r-\bar r)^2+(z-\bar z)^2}{r\bar r}$.
Accordingly, we write
\begin{equation}\begin{split}\label{a19}
&r^{\al+\f1p}\cL_m^{-1}\bigl(r^{\beta-2} f\bigr)(r,z)
=\int_{\R^+\times\R}\wt H_m(r,\bar r,z,\bar z)
\cdot {\bar r}^{\f1q}f(\bar r,\bar z)\,d\bar rd\bar z,\\
&r^{\al+\f1p}\cL_m^{-1}\bigl(\f1r\wt\nabla(r^\beta f)\bigr)(r,z)
=\int_{\R^+\times\R}H_m(r,\bar r,z,\bar z)
\cdot {\bar r}^{\f1q}f(\bar r,\bar z)\,d\bar rd\bar z,
\end{split}\end{equation}
where the integral kernels $H_m(r,\bar r,z,\bar z)$ and $\wt H_m(r,\bar r,z,\bar z)$ satisfy
\begin{equation}\begin{split}\label{a20}
&\quad\qquad\qquad\qquad|\wt H_m(r,\bar r,z,\bar z)|
\lesssim {r^{\al+\f1p-\f12}\bar r^{\beta-\f1q-\f32}}
|F_m(\xi^2)|,\\
&|H_m(r,\bar r,z,\bar z)|
\lesssim {r^{\al+\f1p-\f12}\bar r^{\beta-\f1q-\f32}}
\Bigl(|F_m(\xi^2)|+|F_m'(\xi^2)|\cdot
\bigl(\f{|r-\bar r|+|z-\bar z|}{r}+\xi^2\bigr)\Bigr).
\end{split}\end{equation}

Next we claim that, under the assumption \eqref{a15},
one has \begin{equation}\label{a21}
|\wt H_m(r,\bar r,z,\bar z)|\lesssim_\e
m^{-\left(1+\f1p-\f1q-\e\right)}
\left((r-\bar r)^{2}+(z-\bar z)^2\right)^{-\left(1+\f1p-\f1q\right)},
\end{equation}
and if in addition $\f12+\f1p-\f1q>0$, there holds
\begin{equation}\label{a22}
|H_m(r,\bar r,z,\bar z)|\lesssim_\e
m^{-\left(\f12+\f1p-\f1q-\e\right)}\left((r-\bar r)^{2}+(z-\bar z)^2\right)^{-\left(1+\f1p-\f1q\right)}.
\end{equation}

Let us first present the proof of \eqref{a22}.
The strategy is to use
\eqref{a20} and to decompose the integral domain in \eqref{a19} into the following three parts:
$$\Omega_1\eqdefa \bigl\{(\bar r,\bar z):\ \bar r<\f r2\bigr\},\quad
\Omega_2\eqdefa \bigl\{(\bar r,\bar z):\ \bar r>2r\bigr\}\andf
\Omega_3\eqdefa \bigl\{(\bar r,\bar z):\ \f r2\leq\bar r\leq2r\bigr\}.$$

\no {\bf Case 1:}
When $(\bar r,\bar z)\in\Omega_1$, there holds $r\thicksim|r-\bar r|$
and $\xi^2\geq\f12$.
If $-2\leq\beta-\f1q\leq\f12$, we deduce from \eqref{a8} and \eqref{a20}
that
\begin{align*}
|H_m(r,\bar r,z,\bar z)|
\lesssim & m^{-1}{r^{\al+\f1p-\f12}\bar r^{\beta-\f1q-\f32}}
\Bigl(\bigl(\f{r\bar r}{(r-\bar r)^2+(z-\bar z)^2}\bigr)^{\f32-\beta+\f1q}\\
&+\bigl(\f{r\bar r}{(r-\bar r)^2+(z-\bar z)^2}\bigr)^{\f52-\beta+\f1q}
\bigl(\f{(r-\bar r)^2+(z-\bar z)^2}{r\bar r}\bigr)\Bigr)\\
\lesssim &\f{r^{1+\al-\beta+\f1p+\f1q}}
{m\bigl((r-\bar r)^2+(z-\bar z)^2\bigr)^{\f32-\beta+\f1q}}
\lesssim \f{|r-\bar r|^{1+\al-\beta+\f1p+\f1q}}{m\bigl((r-\bar r)^2+(z-\bar z)^2\bigr)^{\f32-\beta+\f1q}}.
\end{align*}
Due to $\f1q=\f1p+\f{\al+\beta}3,$
 we have
\begin{align*}
\f32-\beta+\f1q-\f12\left(1+\al-\beta+\f1p+\f1q\right)=1-\left(\f1q-\f1p\right),
\end{align*}
which together with $\beta-\f1q\leq\f12$ and $q<p$ implies
that $1+\al-\beta+\f1p+\f1q>0,$ so that
\begin{align*}
|H_m(r,\bar r,z,\bar z)|
\lesssim &\f{\bigl((r-\bar r)^2+(z-\bar z)^2\bigr)^{\f12\left(1+\al-\beta+\f1p+\f1q\right)}}
{m\bigl((r-\bar r)^2+(z-\bar z)^2\bigr)^{\f32-\beta+\f1q}}\\
\lesssim & m^{-1}
\bigl((r-\bar r)^2+(z-\bar z)^2\bigr)^{-(1+\f1p-\f1q)}.
\end{align*}

While when $\beta-\f1q>\f12$, we deduce from \eqref{a8}
and the facts that $\bar r<\f r2$ and $\xi^2\geq\f12,$ that
\begin{align*}
|H_m(r,\bar r,z,\bar z)|
\lesssim & m^{-1} r^{-1+\al+\f1p+\beta-\f1q}\bar r^{-1}
\bigl(\xi^{-2}+\xi^{-4}\xi^2\bigr)\\
\lesssim & \f{|r-\bar r|^{\al+\beta+\f1p-\f1q}}{m\bigl((r-\bar r)^2+(z-\bar z)^2\bigr)}.
\end{align*}
Noticing that
\beq\label{a22a} \al+\beta+\f1p-\f1q=2\left(\f1q-\f1p\right)>0,\eeq
 we infer
\begin{align*}
|H_m(r,\bar r,z,\bar z)|
\lesssim&m^{-1}
\bigl((r-\bar r)^2+(z-\bar z)^2\bigr)^{-\left(1+\f1p-\f1q\right)}.
\end{align*}

By summarizing the above two cases, we achieve \eqref{a22} for  $(\bar r,\bar z)\in \Omega_1$.

\no {\bf Case 2:}
When  $(\bar r,\bar z)\in \Omega_2$, there holds $\bar r\thicksim|r-\bar r|$.
In case $-3\leq\al+\f1p\leq-\f12$, we deduce once again from  \eqref{a8} and \eqref{a20}
that
\begin{align*}
|H_m(r,\bar r,z,\bar z)|
\lesssim & m^{-1}{r^{\al+\f1p-\f12}\bar r^{\beta-\f1q-\f32}}
\Bigl(\bigl(\f{r\bar r}{(r-\bar r)^2+(z-\bar z)^2}\bigr)^{\f12-\al-\f1p}\\
&+\bigl(\f{r\bar r}{(r-\bar r)^2+(z-\bar z)^2}\bigr)^{\f32-\al-\f1p}
\bigl(\f{r\bar r}{(r-\bar r)^2+(z-\bar z)^2}\bigr)^2\Bigr)\\
\lesssim & \f{\bar r^{-1-\al+\beta-\f1p-\f1q}}
{m\bigl((r-\bar r)^2+(z-\bar z)^2\bigr)^{\f12-\al-\f1p}}
\lesssim  \f{|r-\bar r|^{-1-\al+\beta-\f1p-\f1q}}
{m\bigl((r-\bar r)^2+(z-\bar z)^2\bigr)^{\f12-\al-\f1p}}.
\end{align*}
It is easy to observe that
$$
\f12-\al-\f1p+\f12\left(1+\al-\beta+\f1p+\f1q\right)=1-\left(\f1q-\f1p\right).
$$
Due to $\al+\f1p\leq-\f12$ and $q<p,$ we find
$1+\al-\beta+\f1p+\f1q<0,$ so that we  obtain
\begin{align*}
|H_m(r,\bar r,z,\bar z)|
\lesssim & \f{\bigl((r-\bar r)^2+(z-\bar z)^2\bigr)^{-\f12\left(1+\al-\beta+\f1p+\f1q\right)}}
{m\bigl((r-\bar r)^2+(z-\bar z)^2\bigr)^{\f12-\al-\f1p}}\\
\lesssim & m^{-1}
\bigl((r-\bar r)^2+(z-\bar z)^2\bigr)^{-(1+\f1p-\f1q)}.
\end{align*}

While when $\al+\f1p>-\f12$, we get, by using \eqref{a8}
and the facts: $r<\f{\bar r}2$ and $\xi^2\geq\f12,$ that
\begin{align*}
|H_m(r,\bar r,z,\bar z)|
\lesssim & m^{-1} r^{-1}{\bar r}^{-1+\al+\beta+\f1p-\f1q}
\bigl(\xi^{-2}+\xi^{-4}\xi^2\bigr)\\
\lesssim & \f{\bar r^{\al+\beta+\f1p-\f1q}}{m\bigl((r-\bar r)^2+(z-\bar z)^2\bigr)}
\lesssim  \f{|r-\bar r|^{\al+\beta+\f1p-\f1q}}{m\bigl((r-\bar r)^2+(z-\bar z)^2\bigr)},
\end{align*}
which together with \eqref{a22a} ensures that
\begin{align*}
|H_m(r,\bar r,z,\bar z)|
\lesssim& m^{-1}
\bigl(|r-\bar r|^2+|z-\bar z|^2\bigr)^{-(1+\f1p-\f1q)}.
\end{align*}

By combining the above two cases, we achieve \eqref{a22} for $(\bar r,\bar z)\in\Omega_2$.

\no {\bf Case 3:}
When $(\bar r,\bar z)\in\Omega_3$, one has $r\thicksim\bar r$, so that there holds
$${r^{\al+\f1p-\f12}\bar r^{\beta-\f1q-\f32}}
\thicksim r^{-2+\al+\beta+\f1p-\f1q}=r^{-2\left(1+\f1p-\f1q\right)},\andf
\xi\thicksim\f{|r-\bar r|+|z-\bar z|}r.$$
When $\xi\geq1$, we get, by using \eqref{a20}, that
$$|H_m(r,\bar r,z,\bar z)|
\lesssim r^{-2\left(1+\f1p-\f1q\right)}\bigl(|F_m(\xi^2)|
+|F_m'(\xi^2)|\xi^2\bigr),$$
from which and \eqref{a14}, we deduce that for any
$\e\in\left]0,1+\f1p-\f1q\right[$,
\begin{align*}
|H_m(r,\bar r,z,\bar z)|
\lesssim_\e & m^{-\left(1+\f1p-\f1q-\e\right)}\cdot r^{-2\left(1+\f1p-\f1q\right)}
\Bigl(\xi^{-2\left(1+\f1p-\f1q\right)}+\xi^{-2-2\left(1+\f1p-\f1q\right)}\xi^2\Bigr)\\
\lesssim_\e & m^{-\left(1+\f1p-\f1q-\e\right)}
\bigl((r-\bar r)^2+(z-\bar z)^2\bigr)^{-\left(1+\f1p-\f1q\right)}.
\end{align*}
While when $\xi\in [0,1[$, it follows from \eqref{a20} that
$$|H_m(r,\bar r,z,\bar z)|
\lesssim r^{-2\left(1+\f1p-\f1q\right)}\bigl(|F_m(\xi^2)|
+|F_m'(\xi^2)|\xi\bigr),$$
from which and
 \eqref{a14}, we deduce that for any small
$\e\in\left]0,\f12+\f1p-\f1q\right[$,
\begin{align*}
|H_m(r,\bar r,z,\bar z)|
\lesssim_\e& r^{-2\left(1+\f1p-\f1q\right)}
\Bigl( m^{-\left(1+\f1p-\f1q-\e\right)}\xi^{-2\left(1+\f1p-\f1q\right)}+
 m^{-\left(\f12+\f1p-\f1q-\e\right)}\xi^{-2-2\left(\f12+\f1p-\f1q\right)}\xi\Bigr)\\
\lesssim_\e&m^{-\left(\f12+\f1p-\f1q-\e\right)}
\bigl((r-\bar r)^2+(z-\bar z)^2\bigr)^{-\left(1+\f1p-\f1q\right)}.
\end{align*}

By combining the above two cases, we achieve \eqref{a22} for $(\bar r,\bar z)\in\Omega_3$.

So far we have completed the proof of \eqref{a22}. Moreover, thanks to \eqref{a20}, the proof of \eqref{a22} also ensures
\eqref{a21}. Thanks to \eqref{a19} and \eqref{a22}, we find
\begin{align*}
r^{\al+\f1p}\bigl|\cL_m^{-1}\bigl(\f1r\wt\nabla(r^\beta f)\bigr)(r,z)\bigr|
\lesssim_\e &
m^{-\left(\f12+\f1p-\f1q-\e\right)}\\
&\times \iint_{\R^+\times\R}\left((r-\bar r)^{2}+(z-\bar z)^2\right)^{-\left(1+\f1p-\f1q\right)}
{\bar r}^{\f1q}f(\bar r,\bar z)\,d\bar rd\bar z.
\end{align*}
Then we get, by using Hardy-Littlewood-Sobolev inequality, that
\begin{align*}
\bigl\|r^{\al}\cL_m^{-1}\bigl(\f1r\wt\nabla(r^\beta f)\bigr)\bigr\|_{L^p}
=& \bigl\|r^{\al+\f1p}\cL_m^{-1}\bigl(\f1r\wt\nabla(r^\beta f)\bigr)\bigr\|_{L^p(\,dr\,dz)}\\
\lesssim_\e &
m^{-\left(\f12+\f1p-\f1q-\e\right)}\bigl\|{\bar r}^{\f1q}f(\bar r,\bar z)\bigr\|_{L^q(\,d\bar r\,d\bar z)}
\lesssim_\e
m^{-\left(\f12+\f1p-\f1q-\e\right)}\|f\bigr\|_{L^q},
\end{align*}
which is exactly the desired estimate \eqref{a18}. And \eqref{a16} follows along the same line
by using \eqref{a21} instead of \eqref{a22}.
This completes the proof of Proposition \ref{lemLm}.
\end{proof}

\begin{rmk}\label{rmkLm}
\begin{itemize}
\item[(1)] The assumption $\al+\beta>0$ in \eqref{a15} is essential, as our proof is based on
Hardy-Littlewood-Sobolev inequality.

\item[(2)] The constants $-3$ and $-2$
in $\al+\f1p\geq-3$ and $\beta-\f1q\geq-2,$ which appears in \eqref{a15}, actually depend on how large
$m$ is. For instance, if $\al$ is so small  that $\al+\f1p\geq-3-n$, then similar estimates as \eqref{a16} and \eqref{a17} still hold for $m\geq3+n.$ All what we need to modify the proof is to replace the upper bounds for the corresponding $|F_m(s)|$.
This  will not cause any difficulty when we  deal with
the pressure term in \eqref{eqtuk}, as the operator we
shall encounter there is $\cL_{kN}^{-1}$ with $kN$ being sufficiently large.

\item[(3)] The difficulty arises in the estimate of the inverse operator of $\cL_0$.
Indeed, it follows from \eqref{a3}  that
for any $0<\al\leq \f12,~1\leq\beta\leq \f32,~2\leq\gamma\leq \f52$,
there holds
$$s^{\al} |F_0(s)|<C\al^{-1},
\andf
s^{\beta} |F_0'(s)|+s^{\gamma} |F_0''(s)|<C.$$
Then we can verify that
similar estimates as \eqref{a16} and \eqref{a17} hold for $\cL_0^{-1}$ under the assumption that
$$\al+\f1p\geq-1\andf\beta-\f1q\geq 0,$$
which will not allow us to handle the pressure term $P_0$
in \eqref{eqtu0} in the same way as the estimate of $P_k$ in Section \ref{secEp}.
This motivates us to estimate $u_0$ alternatively.
\end{itemize}
\end{rmk}

Observing that in view of \eqref{a2}, we have
\begin{align*}
&r^{\al+\f1p}(\pa_r,\pa_z)\cL_m^{-1}\bigl(r^{\beta-1} f\bigr)=\f{r^{\al+\f1p-\f32}}{2\pi}\\&\times \int_{\R^+\times\R}
\Bigl(-\f12 F_m(\xi^2)+F_m'(\xi^2)
\bigl(\f{2(r-\bar r)}{\bar r}-\xi^2\bigr), F_m'(\xi^2)\f{2(z-\bar z)}{\bar r}\Bigr)
\cdot {\bar r}^{\beta-\f12} f(\bar r,\bar z)\,d\bar rd\bar z.
\end{align*}
Then along the same line to the proof of Proposition \ref{lemLm},
we can prove that

\begin{prop}\label{lemLmp}
{\sl Let $m\geq3$, $1<q<p<\infty$, and  $\al,~\beta$ satisfy
$$\al+\beta>0,\quad
\al+\f1p\geq-2,\quad\beta-\f1q\geq-3,\quad\f1q=\f1p+\f{\al+\beta}3
\andf\f12+\f1p-\f1q>0.$$
Then for any axisymmetric function $f(r,z)$ and any sufficiently small $\e>0,$
one has
$$\bigl\|r^{\al}\wt\nabla\cL_m^{-1}\bigl(r^{\beta-1} f\bigr)\bigr\|_{L^p}
\lesssim_\e m^{-\left(\f12+\f1p-\f1q-\e\right)}\|f\|_{L^q}.$$
}\end{prop}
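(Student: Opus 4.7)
The plan is to mimic the argument of Proposition \ref{lemLm}, starting from the explicit integral representation displayed just above the statement of Proposition \ref{lemLmp}. After absorbing a factor of $\bar r^{1/q}$ into the integrand, one writes
$$r^{\al+\f1p}\wt\nabla\cL_m^{-1}\bigl(r^{\beta-1}f\bigr)(r,z)=\int_{\R^+\times\R}G_m(r,\bar r,z,\bar z)\,\bar r^{\f1q}f(\bar r,\bar z)\,d\bar r\,d\bar z,$$
with a kernel satisfying
$$|G_m(r,\bar r,z,\bar z)|\lesssim r^{\al+\f1p-\f32}\bar r^{\beta-\f1q-\f12}\Bigl(|F_m(\xi^2)|+|F_m'(\xi^2)|\Bigl(\f{|r-\bar r|+|z-\bar z|}{\bar r}+\xi^2\Bigr)\Bigr),$$
where $\xi^2=\f{(r-\bar r)^2+(z-\bar z)^2}{r\bar r}$. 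The goal is then to establish the pointwise bound
$$|G_m(r,\bar r,z,\bar z)|\lesssim_\e m^{-(\f12+\f1p-\f1q-\e)}\bigl((r-\bar r)^2+(z-\bar z)^2\bigr)^{-(1+\f1p-\f1q)},$$
after which Hardy-Littlewood-Sobolev finishes the proof exactly as in Proposition \ref{lemLm}.

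To verify this pointwise bound, split the integration into the same three regions $\Omega_1=\{\bar r<r/2\}$, $\Omega_2=\{\bar r>2r\}$, $\Omega_3=\{r/2\leq\bar r\leq 2r\}$ used in Proposition \ref{lemLm}. On $\Omega_1$ one has $r\thicksim|r-\bar r|$, $\xi^2\geq\f12$ and the elementary inequality $\f{|r-\bar r|+|z-\bar z|}{\bar r}\lesssim\xi^2$ (coming from $r/\bar r\leq 4\xi^2$ on this region); on $\Omega_2$ the analogous facts hold with $\bar r\thicksim|r-\bar r|$. Thus on $\Omega_1\cup\Omega_2$ the tangential factor is absorbed into $\xi^2$, and Lemma \ref{propa2}.(i) supplies a full factor of $m^{-1}$. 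The powers of $r$, $\bar r$ and $|r-\bar r|$ are then balanced via the relation $\al+\beta+\f1p-\f1q=2(\f1q-\f1p)>0$, with the two subcases determined by whether $\al+\f1p$ or $\beta-\f1q$ falls in the "small" or "large" range; the assumptions $\al+\f1p\geq-2$ and $\beta-\f1q\geq-3$ (shifted by one relative to Proposition \ref{lemLm} to compensate for the different exponents $r^{\al+\f1p-\f32}\bar r^{\beta-\f1q-\f12}$) are exactly what is needed to close the bookkeeping.

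The region $\Omega_3$ is where the rate $\f12+\f1p-\f1q$ is forced. Here $r\thicksim\bar r$ and $\xi\thicksim\f{|r-\bar r|+|z-\bar z|}{r}$, so the prefactor reduces to $r^{-2(1+\f1p-\f1q)}$ and the kernel collapses to a combination of $|F_m(\xi^2)|$, $|F_m'(\xi^2)|\xi$ and $|F_m'(\xi^2)|\xi^2$. On the sub-region $\xi\geq 1$ Lemma \ref{propa2}.(i) again gives the factor $m^{-1}$; on the sub-region $\xi<1$ the binding term is $|F_m'(\xi^2)|\xi$, and applying Corollary \ref{corFm} with $\delta=\f12+\f1p-\f1q\in(0,1)$ and $\delta'=\f12+\f1p-\f1q-\e$ produces precisely the factor $m^{-(\f12+\f1p-\f1q-\e)}$ together with the Riesz-type scaling $\bigl((r-\bar r)^2+(z-\bar z)^2\bigr)^{-(1+\f1p-\f1q)}$. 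This is the unique place where the hypothesis $\f12+\f1p-\f1q>0$ is invoked.

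The main obstacle, as in Proposition \ref{lemLm}, is the small-$\xi$ behavior on $\Omega_3$: the derivative that has been moved outside of $\cL_m^{-1}$ produces an extra factor of $\xi$ (rather than $\xi^2$) near the origin, which degrades the attainable $m$-decay rate from $1+\f1p-\f1q$ down to $\f12+\f1p-\f1q$. Controlling this loss by interpolating, via Corollary \ref{corFm}, between the rapid decay of $F_m'$ at infinity (given by Lemma \ref{propa2}.(i)) and its $\ln m$ growth at the origin (given by Lemma \ref{propa2}.(ii)) is the principal technical point; the rest of the argument is a straightforward rerun of the case analysis already carried out for Proposition \ref{lemLm}.
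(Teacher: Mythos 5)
Your proposal is correct and follows exactly the route the paper intends: the paper's own ``proof'' of Proposition \ref{lemLmp} consists of writing out the explicit kernel formula displayed just above the statement and then saying ``along the same line to the proof of Proposition \ref{lemLm}''; you have filled in those details faithfully, correctly noting that the prefactor becomes $r^{\al+\f1p-\f32}\bar r^{\beta-\f1q-\f12}$ (so the ranges for $\al+\f1p$ and $\beta-\f1q$ shift by one relative to Proposition \ref{lemLm}), that the tangential factor is $(|r-\bar r|+|z-\bar z|)/\bar r$ rather than $(|r-\bar r|+|z-\bar z|)/r$, and that on $\Omega_3$ the single derivative produces a factor of $\xi$ near the origin which, through Corollary \ref{corFm} with $\delta=\f12+\f1p-\f1q$, gives the decay rate $m^{-(\f12+\f1p-\f1q-\e)}$. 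One small remark: your parenthetical explanation of the inequality $(|r-\bar r|+|z-\bar z|)/\bar r\lesssim\xi^2$ on $\Omega_1$ (``coming from $r/\bar r\leq 4\xi^2$'') only handles the $|r-\bar r|$ part; bounding the $|z-\bar z|/\bar r$ piece also requires an AM--GM step (if $|z-\bar z|\geq r$ then $\xi^2\geq(z-\bar z)^2/(r\bar r)\geq|z-\bar z|/\bar r$; if $|z-\bar z|<r\thicksim|r-\bar r|$ one reduces to the first term), but this is a minor gap in exposition, not in substance.
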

\smallskip

To deal with the pressure term in \eqref{eqtuk}, we shall encounter
the operators of the form
\begin{equation}\label{a23}
\wt\nabla\cL_m^{-1}\circ\f1r\quad\text{or}\quad\cL_m^{-1}\lozenge\lozenge,
\quad\text{where}\quad\lozenge\eqdefa\bigl(\pa_r,\pa_z,\f1r\bigr).
\end{equation}
With Propositions \ref{lemLm} and \ref{lemLmp}, it remains to deal with the operator
 $\cL_m^{-1}\wt\nabla\wt\nabla$,
 which will be very different from the previous cases.
Precisely,  we have

\begin{prop}\label{lemLmRiesz}
{\sl Let $m\geq3$ and $1<p<\infty.$ Then
 for any axisymmetric  function $f(r,z),$  one has
$$\bigl\|\cL_m^{-1}\wt\nabla\wt\nabla f\bigr\|_{L^p}
\lesssim\|f\|_{L^p}+\bigl\|\cL_m^{-1}
\bigl(\f{1}r\wt\nabla f\bigr)\bigr\|_{L^p}.$$
}\end{prop}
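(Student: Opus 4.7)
The plan is to pass to $\R^3$ via the extension $g\eqdef f(r,z)e^{im\theta}$, which satisfies $-\D g=(\cL_m f)e^{im\theta}$ so that $(-\D)^{-1}(he^{im\theta})=(\cL_m^{-1}h)e^{im\theta}$ for every axisymmetric $h$. Since $|e^{im\theta}|\equiv 1$, the $L^p(\R^3)$ norms of axisymmetric functions and of their $e^{im\theta}$-extensions coincide up to a harmless constant, so the task reduces to bounding $(-\D)^{-1}\partial_r^2 g$, $(-\D)^{-1}\partial_r\partial_z g$ and $(-\D)^{-1}\partial_z^2 g$ in $L^p(\R^3)$. The $\partial_z^2$ case is immediate: $\partial_z=\partial_3$ is a Cartesian derivative, so $(-\D)^{-1}\partial_3^2$ is a second-order Riesz transform and hence bounded on $L^p$.

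For the mixed derivative, using $\partial_r=\cos\theta\,\partial_1+\sin\theta\,\partial_2$ together with $\partial_z\cos\theta=\partial_z\sin\theta=0$ and the elementary identity $\partial_1\cos\theta+\partial_2\sin\theta=\f{\sin^2\theta+\cos^2\theta}{r}=\f{1}{r}$, a direct product-rule computation gives
$$\partial_r\partial_z g=\partial_1\partial_3(\cos\theta\,g)+\partial_2\partial_3(\sin\theta\,g)-\f{1}{r}\partial_z g.$$
The first two terms are double Riesz transforms applied to $\cos\theta\,g$ and $\sin\theta\,g$, hence bounded on $L^p$ by $\|g\|_{L^p}\lesssim\|f\|_{L^p}$; the last one equals $e^{im\theta}\cL_m^{-1}(\f{1}{r}\partial_z f)$ and therefore contributes $\|\cL_m^{-1}(\f{1}{r}\partial_z f)\|_{L^p}$ to the bound.

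The main case $\partial_r^2$ needs one further reduction. The same product-rule argument yields $\partial_r^2 g=\partial_1(\cos\theta\,\partial_r g)+\partial_2(\sin\theta\,\partial_r g)-\f{\partial_r g}{r}$. The identities $\partial_1(\cos^2\theta)+\partial_2(\sin\theta\cos\theta)=\f{\cos\theta}{r}$ and $\partial_1(\sin\theta\cos\theta)+\partial_2(\sin^2\theta)=\f{\sin\theta}{r}$ (direct computations from $\partial_1\cos\theta=\sin^2\theta/r$ and its analogues) allow one to write
$$\cos\theta\,\partial_r g=\partial_1(\cos^2\theta\,g)+\partial_2(\sin\theta\cos\theta\,g)-\f{\cos\theta}{r}g,$$
together with an analogous formula for $\sin\theta\,\partial_r g$. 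Substituting these and invoking the crucial cancellation $\partial_1\!\bigl(\f{\cos\theta}{r}g\bigr)+\partial_2\!\bigl(\f{\sin\theta}{r}g\bigr)=\f{\partial_r g}{r}$, which holds because $\partial_1(\cos\theta/r)+\partial_2(\sin\theta/r)=0$, the whole expression collapses to
$$\partial_r^2 g=\partial_1^2(\cos^2\theta\,g)+2\partial_1\partial_2(\sin\theta\cos\theta\,g)+\partial_2^2(\sin^2\theta\,g)-\f{2}{r}\partial_r g.$$
Applying $(-\D)^{-1}$, the first three pieces become double Riesz transforms acting on products of $g$ with bounded angular multipliers, giving an $L^p$-bound $\lesssim\|f\|_{L^p}$, while the last term yields $2e^{im\theta}\cL_m^{-1}(\f{1}{r}\partial_r f)$. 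The main obstacle is exactly this cancellation step: a naive rearrangement of $\partial_r^2 g$ would leave residual $(\cdot/r^2)g$ terms whose $(-\D)^{-1}$ cannot be controlled in $L^p$, whereas the identity above keeps the error at precisely the level $\cL_m^{-1}(\f{1}{r}\wt\nabla f)$ permitted by the proposition. Summing the three cases closes the argument.
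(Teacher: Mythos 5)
Your proof is correct and rests on the same decomposition as the paper's: in both cases, $\pa_r\pa_z$ and $\pa_r^2$ are rewritten as double Cartesian Riesz transforms applied to products of the data with bounded angular factors, plus an error that lands exactly in the allowed class $\cL_m^{-1}\bigl(\f1r\wt\nabla f\bigr)$. The difference is your choice of $e^{im\th}$ in place of $\cos m\th$, and this genuinely streamlines the argument. The paper first derives the identity $\pa_r^2 f=-2\f{\pa_r f}{r}+\pa_1^2(f\cos^2\th)+\pa_2^2(f\sin^2\th)+\pa_1\pa_2(f\sin 2\th)$ for axisymmetric $f$, and then must perform a second, fairly involved cancellation computation (producing and killing terms like $\f{3m}{2r^2}f\sin 4\th\sin m\th$) to commute the angular factors past $\cos m\th$ and recognize $-\D(Q_{32}\cos m\th)$ as a double Riesz transform of $f\cdot(\text{angular factor})\cdot\cos m\th$; it also needs the separate observation that $\|Q_{32}\cos m\th\|_{L^p}\sim\|Q_{32}\|_{L^p}$. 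Working with $g=fe^{im\th}$ absorbs both steps: your pointwise identity $\pa_r^2 g=\pa_1^2(\cos^2\th\,g)+2\pa_1\pa_2(\sin\th\cos\th\,g)+\pa_2^2(\sin^2\th\,g)-\f{2\pa_r g}{r}$ holds for arbitrary $g$, so one simply substitutes $g=fe^{im\th}$, applies $(-\D)^{-1}$, and uses $|e^{im\th}|\equiv 1$. The essential algebraic content — in particular the cancellation $\pa_1\bigl(\f{\cos\th}{r}\bigr)+\pa_2\bigl(\f{\sin\th}{r}\bigr)=0$ — is the same, but your packaging avoids the $m$-dependent bookkeeping entirely.
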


\begin{proof}
In what follows, we shall frequently use the relations:
$$\pa_r=\cos\th\pa_1+\sin\th \pa_2,\quad
\pa_\th=-r\sin\th\pa_1+r\cos\th\pa_2,$$
and
$$\pa_1=\cos\th\pa_r-\f{\sin\th}r\pa_\th,\quad
\pa_2=\sin\th\pa_r+\f{\cos\th}r\pa_\th.$$

Let us first handle the estimate of $\cL_m^{-1}\pa_r\pa_z f$.
We denote $Q_1\eqdefa \cL_m^{-1}\pa_r\pa_z f$, i.e. $Q_1$ solves
$$\Bigl(-\bigl(\pa_r^2+\f{\pa_r}r
+\pa_z^2\bigr)+\f{m^2}{r^2}\Bigr)Q_1=\pa_r\pa_z f,$$
so that there holds
\begin{align*}
\cL_m Q_1=\pa_r\pa_z f
=\pa_3\bigl(\cos\th\pa_1+\sin\th\pa_2\bigr)f
=\pa_3\Bigl(\pa_1(f\cos\th)+\pa_2(f\sin\th)
-\f fr\Bigr).
\end{align*}
Accordingly, we  decompose $Q_1$ into $Q_{11}+Q_{12}$ with
$$Q_{11}\eqdefa -\cL_m^{-1}\pa_z\bigl(\f{f}r\bigr),\andf
Q_{12}\eqdefa\cL_m^{-1}\pa_3\left(\pa_1(f\cos\th)+\pa_2(f\sin\th)\right).$$
Since $f(r,z)$ is an axisymmetric function,  so are $Q_1$ and
$Q_{11}.$
Then $Q_{12}=Q_1-Q_{11}$ is also an axisymmetric function, i.e. $\pa_\th Q_{12}=0$.
As a result, it comes out
$$-\D\bigl(Q_{12}(r,z)\cos m\th\bigr)=\bigl(\cL_m Q_{12}(r,z)\bigr)\cos m\th
=\pa_3\left(\pa_1(f\cos\th)+\pa_2(f\sin\th)\right)\cos m\th.$$
It is interesting to observe that
\begin{align*}
\left(\pa_1(f\cos\th)+\pa_2(f\sin\th)\right)\cos m\th
=\pa_1(f\cos\th\cos m\th)+\pa_2(f\sin\th\cos m\th),
\end{align*}
which implies
$$Q_{12}(r,z)\cos m\th
=(-\D)^{-1}\pa_3\bigl(\pa_1(f\cos\th\cos m\th)
+\pa_2(f\sin\th\cos m\th)\bigr).$$
Then we get, by applying the $L^p~(1<p<\infty)$ boundedness
of the double Riesz transform $(-\D)^{-1}\pa_i\pa_j$ (see for instance \cite{stein}), that
\begin{equation}\label{a24}
\|Q_{12}(r,z)\cos m\th\|_{L^p}
\lesssim\|f\cos\th\cos m\th\|_{L^p}
+\|f\sin\th\cos m\th\|_{L^p}\lesssim\|f\|_{L^p}.
\end{equation}

On the other hand, observing that  $Q_{12}(r,z)$ is an axisymmetric function, one has
\begin{align*}
\|Q_{12}\cos m\th\|_{L^p}^p=\int_0^{2\pi}|\cos m\th|^p\,d\th\|Q_{12}\|_{L^p}^p, \andf\\
\int_0^{2\pi}|\cos m\th|^p\,d\th=2m\int_{-\f\pi{2m}}^{\f\pi{2m}}\cos^p m\th\,d\th=2\int_{-\f\pi2}^{\f\pi{2}}\cos^p \th\,d\th,
\end{align*}
which together with \eqref{a24} ensures that
$$\|Q_{12}\|_{L^p}\lesssim\|f\|_{L^p}.$$
Consequently, we achieve
\begin{equation}\label{a25}
\|\cL_m^{-1}\pa_r\pa_z f\|_{L^p}
\leq\|Q_{11}\|_{L^p}+\|Q_{12}\|_{L^p}
\lesssim\|f\|_{L^p}+\bigl\|\cL_m^{-1}\bigl(\f{1}r\pa_z f\bigr)\bigr\|_{L^p}.
\end{equation}

The estimate of $Q_2(r,z)\eqdefa\cL_m^{-1}\pa_z^2 f$ is much easier. Indeed
noticing that
$$-\D\bigl(Q_2(r,z)\cos m\th\bigr)=\bigl(\cL_m Q_2(r,z)\bigr)\cos m\th
=\pa_3^2(f\cos m\th),$$ from which,
 the fact that $Q_2$ is axisymmetric and the $L^p~(1<p<\infty)$ boundedness
of the double Riesz transform $(-\D)^{-1}\pa_3^2,$  we deduce that
\begin{equation}\label{a26}
\|Q_2\|_{L^p}\thicksim\|Q_2(r,z)\cos m\th\|_{L^p}
\lesssim\|f\cos m\th\|_{L^p}\lesssim\|f\|_{L^p}.
\end{equation}

Finally let us turn to the estimate of the term: $Q_3(r,z)\eqdefa\cL_m^{-1}\pa_r^2 f$.
For axisymmetric function $f,$
we first get, by a direct calculation, that
\begin{align*}
\pa_r^2 f=&\pa_r\Bigl(\pa_1(f\cos\th)+\pa_2(f\sin\th)
-\f fr\Bigr)\\
=&\pa_r\pa_1\left(f\cos\th\right)+\pa_r\pa_2\left(f\sin\th\right)
-\f{\pa_r f}r+\f{f}{r^2}\\
=&-\f{\pa_r f}r+\f{f}{r^2}+\pa_1\left[\cos\th\bigl(\pa_1(f\cos\th)
+\pa_2(f\sin\th)\bigr)\right]\\
&+\pa_2\left[\sin\th\bigl(\pa_1(f\cos\th)
+\pa_2(f\sin\th)\bigr)\right]
-\f1r\left(\pa_1(f\cos\th)
+\pa_2(f\sin\th)\right)\\
=&-2\f{\pa_r f}r+\pa_1^2\bigl(f\cos^2\th\bigr)
+\pa_2^2\bigl(f\sin^2\th\bigr)
+\pa_1\pa_2\bigl(f\sin2\th\bigr).
\end{align*}
So that we  decompose $Q_3(r,z)$ into $Q_{31}(r,z)+Q_{32}(r,z)$, where
$$Q_{31}\eqdefa-2\cL_m^{-1}\bigl(\f{\pa_r f}r \bigr),\quad
Q_{32}\eqdefa \cL_m^{-1}\left(\pa_1^2\bigl(f\cos^2\th\bigr)
+\pa_2^2\bigl(f\sin^2\th\bigr)
+\pa_1\pa_2\bigl(f\sin2\th\bigr)\right).$$
Since $Q_{32}$ does not depend on the $\th$ variable, one has
\begin{align*}
-\D\bigl(Q_{32}(r,z)\cos m\th\bigr)&=\bigl(\cL_m Q_{32}(r,z)\bigr)\cos m\th\\
&=\left(\pa_1^2\bigl(f\cos^2\th\bigr)
+\pa_2^2\bigl(f\sin^2\th\bigr)+\pa_1\pa_2\bigl(f\sin2\th\bigr)\right)\cos m\th.
\end{align*}
Yet observing that
\begin{align*}
&\bigl(\pa_1^2\bigl(f\cos^2\th\bigr)
+\pa_2^2\bigl(f\sin^2\th\bigr)\bigr)\cos m\th\\
&=\pa_1^2\bigl(f\cos^2\th\cos m\th\bigr)
+\pa_2^2\bigl(f\sin^2\th\cos m\th\bigr)-f\cos^2\th\pa_1^2\cos m\th\\
&\quad-f\sin^2\th\pa_2^2\cos m\th
-2\bigl(\pa_1\bigl(f\cos^2\th\bigr)\pa_1\cos m\th+\pa_2\bigl(f\sin^2\th\bigr)\pa_2\cos m\th\bigr)\\
&=\pa_1^2\bigl(f\cos^2\th\cos m\th\bigr)
+\pa_2^2\bigl(f\sin^2\th\cos m\th\bigr)\\
&\quad+\f{3m}{2r^2}f\sin 4\th\sin m\th+\f{m^2}{2r^2}f\sin^22\th\cos m\th -\f{m}{2r}\p_rf\sin 4\th\cos m\th,
\end{align*}
and
\begin{align*}
\pa_1\pa_2\bigl(f\sin2\th\bigr)\cos m\th
&=\pa_1\pa_2\bigl(f\sin2\th\cos m\th\bigr)-f\sin2\th\pa_1\pa_2\cos m\th\\
&\quad-\pa_1\bigl(f\sin2\th\bigr)\pa_2\cos m\th
-\pa_2\bigl(f\sin2\th\bigr)\pa_1\cos m\th\\
&=\pa_1\pa_2\bigl(f\sin2\th\cos m\th\bigr)-\f{3m}{2r^2}f\sin 4\th\sin m\th\\
&\quad-\f{m^2}{2r^2}f\sin^22\th\cos m\th +\f{m}{2r}\p_rf\sin 4\th\cos m\th.
\end{align*}
As a result, it comes out
\begin{align*}
-\D\bigl(Q_{32}(r,z)\cos m\th\bigr)
&=\pa_1^2\bigl(f\cos^2\th\cos m\th\bigr)
+\pa_2^2\bigl(f\sin^2\th\cos m\th\bigr)
+\pa_1\pa_2\bigl(f\sin2\th\cos m\th\bigr).
\end{align*}
Then we get, by a similar derivation of \eqref{a26}, that
$$\|Q_{32}\|_{L^p}\thicksim\|Q_{32}\cos m\th\|_{L^p}
\lesssim\|f\|_{L^p}.$$
Hence we obtain
\begin{equation}\label{a27}
\|\cL_m^{-1}\pa_r^2 f\|_{L^p}
\leq\|Q_{31}\|_{L^p}+\|Q_{32}\|_{L^p}
\lesssim\|f\|_{L^p}+\bigl\|\cL_m^{-1}\bigl(\f{\pa_r f}r \bigr)\bigr\|_{L^p}.
\end{equation}

By summarizing  the estimates \eqref{a25}-\eqref{a27},  we
complete the proof of  Proposition \ref{lemLmRiesz}.
\end{proof}

Proposition \ref{lemLmRiesz} handles the estimate of $\cL_m^{-1}\wt\nabla\wt\nabla f$ in the $L^p$ framework.
Next we are going to reduce its $r$-weighted estimate to Propositions \ref{lemLm} and \ref{lemLmRiesz} via the following proposition:

\begin{prop}\label{lemLmweightedRiesz}
{\sl Let $m\geq3,~1<q<p<\infty$ and $0<\gamma<\f32$ which
satisfy $\f1q=\f1p+\f\gamma3.$ Then
for any axisymmetric function$f(r,z)$ and any small enough  $\e>0,$ one has
\begin{align*} \bigl\|r^\gamma\cL_m^{-1}\wt\nabla\wt\nabla f\bigr\|_{L^p}
\lesssim_\e & m^{-\f12+\f\gamma3+\e}
\bigl\|\cL_m^{-1}\wt\nabla\wt\nabla f\bigr\|_{L^q}
+\bigl\|r^\gamma f\bigr\|_{L^p}\\
&+\bigl\|\cL_m^{-1}
\bigl(\f{1}r\wt\nabla(r^\gamma f)\bigr)\bigr\|_{L^p}
+\bigl\|\cL_m^{-1} \bigl(r^{\gamma-2} f\bigr)\bigr\|_{L^p}.\end{align*}
}\end{prop}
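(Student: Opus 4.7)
The plan is to set $g\eqdef\cL_m^{-1}\wt\nabla\wt\nabla f$ and apply $\cL_m$ to $r^\gamma g$, thereby trading the $r^\gamma$ weight for terms that already fall under Propositions \ref{lemLm} and \ref{lemLmRiesz}. Since $g$ depends only on $(r,z)$, a direct computation of the commutator $[\cL_m,r^\gamma]$ using $\cL_m=-(\pa_r^2+\pa_r/r+\pa_z^2)+m^2/r^2$ yields
$$\cL_m(r^\gamma g)=r^\gamma\cL_m g-\gamma^2 r^{\gamma-2}g-2\gamma r^{\gamma-1}\pa_r g=r^\gamma\wt\nabla\wt\nabla f-\gamma^2 r^{\gamma-2}g-2\gamma r^{\gamma-1}\pa_r g.$$
Inverting $\cL_m$ then gives the master identity
$$r^\gamma g=\cL_m^{-1}\bigl(r^\gamma\wt\nabla\wt\nabla f\bigr)-\gamma^2\cL_m^{-1}\bigl(r^{\gamma-2}g\bigr)-2\gamma\cL_m^{-1}\bigl(r^{\gamma-1}\pa_r g\bigr),$$
and it remains to estimate each of the three pieces.

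Next I would push the weight $r^\gamma$ inside the two derivatives of the first term. A case-by-case calculation for each of $\pa_z^2$, $\pa_r\pa_z$ and $\pa_r^2$ (using $r^{\gamma-1}\pa_r f=(1/r)\pa_r(r^\gamma f)-\gamma r^{\gamma-2}f$ in the last one) produces a relation of the form
$$r^\gamma\wt\nabla\wt\nabla f=\wt\nabla\wt\nabla(r^\gamma f)+\cA\cdot\f1r\wt\nabla(r^\gamma f)+\cB\cdot r^{\gamma-2}f,$$
where $\cA,\cB$ are $\gamma$-dependent constants. Applying $\cL_m^{-1}$ and invoking Proposition \ref{lemLmRiesz} on $r^\gamma f$ yields
$$\bigl\|\cL_m^{-1}(r^\gamma\wt\nabla\wt\nabla f)\bigr\|_{L^p}\lesssim\|r^\gamma f\|_{L^p}+\bigl\|\cL_m^{-1}\bigl(\tfrac1r\wt\nabla(r^\gamma f)\bigr)\bigr\|_{L^p}+\bigl\|\cL_m^{-1}(r^{\gamma-2}f)\bigr\|_{L^p},$$
which supplies the last three terms on the right-hand side of the target inequality.

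Finally, I would control the two commutator terms via Proposition \ref{lemLm} with the admissible choice $(\al,\beta)=(0,\gamma)$: this satisfies $\al+\beta=\gamma>0$ as well as $1/q=1/p+\gamma/3$. The estimate \eqref{a16} then gives
$$\bigl\|\cL_m^{-1}(r^{\gamma-2}g)\bigr\|_{L^p}\lesssim_\e m^{-1+\gamma/3+\e}\|g\|_{L^q}.$$
For the $\pa_r g$ term I would first rewrite $r^{\gamma-1}\pa_r g=\pa_r(r^{\gamma-1}g)-(\gamma-1)r^{\gamma-2}g$, so that the leading piece is covered by \eqref{a17} (whose side condition $1/2+1/p-1/q=1/2-\gamma/3>0$ is exactly where the hypothesis $\gamma<3/2$ is used):
$$\bigl\|\cL_m^{-1}\wt\nabla(r^{\gamma-1}g)\bigr\|_{L^p}\lesssim_\e m^{-1/2+\gamma/3+\e}\|g\|_{L^q}.$$
Since $m^{-1+\gamma/3+\e}\leq m^{-1/2+\gamma/3+\e}$, combining these bounds with the master identity and recalling $g=\cL_m^{-1}\wt\nabla\wt\nabla f$ closes the proof.

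The main technical obstacle is not analytic but bookkeeping: one must verify that at every application of Proposition \ref{lemLm} the admissibility constraints in \eqref{a15}, in particular $\al+\beta>0$ and $1/2+1/p-1/q>0$, continue to hold; as noted above, this is precisely where $0<\gamma<3/2$ is needed. The commutator computation itself is elementary but requires care to track signs of all lower-order contributions so that they collapse into the four terms on the right-hand side of the stated bound.
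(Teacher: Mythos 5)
Your proposal is correct and follows essentially the same route as the paper: commute $\cL_m$ with the weight $r^\gamma$, rewrite $r^\gamma\wt\nabla\wt\nabla f$ as derivatives of $r^\gamma f$ plus lower-order terms handled by Propositions \ref{lemLm} and \ref{lemLmRiesz}, and control the commutator terms involving $\cL_m^{-1}\wt\nabla\wt\nabla f$ in $L^q$ via \eqref{a16}--\eqref{a17}, with $\gamma<\f32$ entering exactly through the condition $\f12+\f1p-\f1q>0$. The only cosmetic difference is that you split $r^{\gamma-1}\pa_r g=\pa_r(r^{\gamma-1}g)-(\gamma-1)r^{\gamma-2}g$ before invoking \eqref{a17}, whereas the paper keeps it as $\f1r\pa_r(r^\gamma g)$; the two forms are interchangeable within Proposition \ref{lemLm}.
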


\begin{proof}
Here we only present the estimate of the term $r^\gamma\cL_m^{-1}\pa_r\pa_z f$.
The estimates of the remaining terms: $r^\gamma\cL_m^{-1}\pa_r^2 f$
and $r^\gamma\cL_m^{-1}\pa_z^2 f,$ can be handled along the same line.
As in the proof of Proposition \ref{lemLmRiesz}, we denote  $Q_1\eqdefa\cL_m^{-1}\pa_r\pa_z f$. Then one has
\begin{align*}
\cL_m\bigl(r^\gamma Q_1\bigr)&=r^\gamma\cL_m Q_1
+\gamma^2 r^{\gamma-2}Q_1-2\gamma\f{\pa_r}r\bigl(r^\gamma Q_1\bigr)\\
&=\pa_r\pa_z\bigl(r^\gamma f\bigr)-\gamma\pa_z\bigl(r^{\gamma-1} f\bigr)
+\gamma^2r^{\gamma-2}Q_1-2\f{\gamma}r\pa_r\bigl(r^\gamma Q_1\bigr).
\end{align*}
Then for sufficiently small $\e>0$, we deuce from Propositions \ref{lemLm} and  \ref{lemLmRiesz} that
\begin{align*}
\bigl\|r^\gamma Q_1\bigr\|_{L^p}
&\lesssim\bigl\|\cL_m^{-1}\pa_r\pa_z\bigl(r^\gamma f\bigr)\bigr\|_{L^p}
+\bigl\|\cL_m^{-1}{\pa_z}\bigl(r^{\gamma-1} f\bigr)\bigr\|_{L^p}\\
&\qquad+\bigl\|\cL_m^{-1}\bigl(r^{\gamma-2}Q_1\bigr)\bigr\|_{L^p}
+\bigl\|\cL_m^{-1}\bigl(\f{\pa_r}r(r^\gamma Q_1)\bigr)\bigr\|_{L^p}\\
&\lesssim_\e\bigl\|r^\gamma f\bigr\|_{L^p}+\bigl\|\cL_m^{-1}
\bigl(\f{1}r\wt\nabla(r^\gamma f)\bigr)\bigr\|_{L^p}
+m^{-\f12+\f\gamma3+\e}\|Q_1\|_{L^q},
\end{align*}
where $p,q$ satisfy $\f1q=\f1p+\f\gamma3$. This completes the proof of Proposition \ref{lemLmweightedRiesz}.
\end{proof}

\begin{rmk}
\begin{itemize}
\item[(1)]
We deduce  from the proof of Proposition \ref{lemLm} that,
if $\lozenge\lozenge$ in \eqref{a23} contains $j$ derivatives,
for $j\in\{0,1,2\}$, then to ensure the boundedness of the operator $\cL_m^{-1}\lozenge\lozenge,$ $p,q$ and $j$ have to satisfy
\begin{equation}\label{conditionj}
1-\f j2+\f1p-\f1q>0.
\end{equation}
In particular, when $j=0$,  \eqref{conditionj}  holds automatically,
that is reason why there is no additional assumption  for the estimate \eqref{a16}. And \eqref{a17}
corresponds to the case when $j=1$. However, when $j=2$,
\eqref{conditionj} becomes $\f1p-\f1q>0$, which contradicts with the assumption
$1<q<p<\infty$.

\item[(2)]
Concerning resolvent estimate of
 the
Schr\"{o}dinger operator: $-\D+V(x)$,  with potential $V(x)\geq0,$
the boundedness of $\nabla^2(-\D+V)^{-1},~\nabla(-\D+V)^{-1}\nabla$
etc. in $L^p$ or Hardy spaces usually requires $V$
to satisfy a sort of reverse H\"older's inequality.
There are numerous works concerning this topic,
here we only list the references  \cite{shen, zhong}.

Observing from \eqref{a1} that $\cL_m$ can be regarded as a Schr\"{o}dinger operator with potential $V(x)=\f{m^2}{r^2},$
which is even not locally integrable.
Yet it follows from  Propositions \ref{lemLm} and \ref{lemLmRiesz}
 that for any axisymmetric function $f(r,z)$ 
$$\bigl\|\cL_m^{-1}\wt\nabla\wt\nabla f\bigr\|_{L^p}
\lesssim\|f\|_{L^p}+\bigl\|\cL_m^{-1}
\bigl(\f{1}r\wt\nabla f\bigr)\bigr\|_{L^p}
\lesssim\|f\|_{L^p}+\|r^{-\gamma} f\|_{L^q}$$
for any $1<q<p<\infty,~0<\gamma<\f32$
satisfying $\f1q=\f1p+\f\gamma3$.
In this sense,  Proposition \ref{lemLmRiesz} is very interesting.
Of course, here the proof highly relies on the  axisymmetric structures.
\end{itemize}
\end{rmk}

\section{The proof of Proposition \ref{propEp}}\label{secEp}

This section is devoted to the proof of Proposition \ref{propEp}, which is the core to the proof of
Theorem \ref{thm1}.
Without ambiguity,
we shall write $E_{p,\al_p,\beta_p}(t)$
briefly as $E_p(t)$ in this section.

\begin{proof}[{\bf Proof of Proposition \ref{propEp}}] For $p\in ]5,6[,$
by taking $L^2$ inner product of the $\ur_k$ and $\uz_k$ equations   in
\eqref{eqtuk} with $r^{p-3}|\wt u_k|^{p-2}\ur_k$ and
$r^{p-3}|\wt u_k|^{p-2}\uz_k$ respectively,
we obtain
\beq\label{S2eq1}
\begin{split}
\int_{\R^3}\Bigl(\f12\pa_t(\ur_k)^2&-\D\ur_k\cdot\ur_k\Bigr)
r^{p-3}|\wt u_k|^{p-2}\,dx
+(1+k^2N^2)\int_{\R^3} r^{p-5}|\wt u_k|^{p-2}(\ur_k)^2\,dx\\
&=-\int_{\R^3}\Bigl(u_0\cdot\widetilde{\nabla}\ur_{k}
+\wt u_k\cdot\wt\nabla\ur_0+\f{2kN}{r^2}\vt_k
-F^r_k+\pa_r P_k\Bigr) r^{p-3}|\wt u_k|^{p-2}\ur_k\,dx,
\end{split} \eeq
and
\beq\label{S2eq2}
\begin{split}
\int_{\R^3}\Bigl(\f12\pa_t|\uz_k|^2&-\D\uz_k\cdot\uz_k\Bigr)
r^{p-3}|\wt u_k|^{p-2}\uz_k\,dx
+k^2N^2\int_{\R^3} r^{p-5}|\wt u_k|^{p-2}|\uz_k|^2\,dx\\
&=-\int_{\R^3}\left(u_0\cdot\widetilde{\nabla}\uz_{k}
+\wt u_k\cdot\wt\nabla\uz_0
-F^z_k+\pa_z P_k\right) r^{p-3}|\wt u_k|^{p-2}\uz_k\,dx.
\end{split} \eeq

In view of \eqref{S2notion}, one has $
|\wt u_k|^2=(\ur_k)^2+(\uz_k)^2,$
and thus
\begin{align*}
\f12\int_{\R^3} r^{p-3}|\wt u_k|^{p-2}\pa_t(\ur_k)^2\,dx
+\f12\int_{\R^3} r^{p-3}|\wt u_k|^{p-2}\pa_t|\uz_k|^2\,dx
=\f1p\f{d}{dt}\int_{\R^3} r^{p-3}|\wt u_k|^p\,dx.
\end{align*}
By using integration by parts, we have
\begin{align*}
-&\int_{\R^3}\D\ur_k\cdot\ur_k r^{p-3}|\wt u_k|^{p-2}\,dx
=-2\pi\int_0^\infty\int_{\R}\bigl(\pa_r^2+\f{\pa_r}r+\pa_z^2\bigr)\ur_k\cdot\ur_k
|\wt u_k|^{p-2} r^{p-2}\,drdz\\
&=2\pi\int_0^\infty\int_{\R}\Bigl(\bigl|\wt\nabla\ur_k\bigr|^2
+\f{p-2}4\bigl(\wt\nabla(\ur_k)^2\cdot|\wt u_k|^{-2}\wt\nabla|\wt u_k|^2\bigr)
+\f{p-3}{2r}\pa_r(\ur_k)^2\Bigr)|\wt u_k|^{p-2}r^{p-2}\,drdz,
\end{align*}
so that there holds
\begin{align*}
-&\int_{\R^3}\D\ur_k\cdot\ur_k r^{p-3}|\wt u_k|^{p-2}\,dx
-\int_{\R^3}\D\uz_k\cdot\uz_k r^{p-3}|\wt u_k|^{p-2}\,dx\\
=&\int_{\R^3} \bigl|\wt\nabla\wt u_k\bigr|^2
|\wt u_k|^{p-2}r^{p-3}\,dx
+(p-2)\int_{\R^3} \bigl|\wt\nabla|\wt u_k|\bigr|^2
|\wt u_k|^{p-2}r^{p-3}\,dx\\
&+\pi(p-3)\int_0^\infty\int_{\R}\pa_r|\wt u_k|^2\cdot|\wt u_k|^{p-2}r^{p-3}\,drdz\\
\geq&\int_{\R^3}|\nabla\wt u_k|^2|\wt u_k|^{p-2}r^{p-3}\,dx
-2\int_{\R^3}|\wt u_k|^p r^{p-5}\,dx,
\end{align*}
where in the last step, we  used integration by part
and  $p\in ]5,6[$ so that
\begin{align*}
\pi(p-3)&\int_0^\infty\int_{\R}\pa_r|\wt u_k|^2\cdot|\wt u_k|^{p-2}r^{p-3}\,drdz\\
&=
-\f{2\pi(p-3)^2}{p}\int_0^\infty\int_{\R} r^{p-4}|\wt u_k|^p\,drdz
+\f{2\pi(p-3)}{p}\int_{\R}\lim_{r\to\infty} \left(r^{p-3}|\wt u_k|^p\right)\,dz\\
&\geq -2\int_{\R^3}|\wt u_k|^p r^{p-5}\,dx.
\end{align*}
Then we get, by summing up \eqref{S2eq1} and \eqref{S2eq2} and then substituting the above estimates into the resulting inequality, that
\begin{equation}\label{3.1}\begin{split}
&\f1p\f{d}{dt}\bigl\|r^{\f{p-3}2}|\wt u_k|^{\f p2}\bigr\|_{L^2}^2
+\bigl\|r^{\f{p-3}2}|\wt\nabla\wt u_k||\wt u_k|^{\f p2-1}\bigr\|_{L^2}^2
+\bigl((kN)^{2}-2\bigr)\bigl\|r^{\f{p-5}2}|\wt u_k|^{\f p2}\bigr\|_{L^2}^2
\leq \sum_{j=1}^5 |I_k^{(j)}|,\\
&I_k^{(1)}\eqdefa
\int_{\R^3}(u_0\cdot\wt\nabla\wt u_k)\cdot |\wt u_k|^{p-2}\wt u_k  r^{p-3}\,dx,
\quad I_k^{(2)}\eqdefa \int_{\R^3}(\wt u_k\cdot\wt\nabla u_0)
\cdot|\wt u_k|^{p-2}\wt u_k r^{p-3}\,dx,\\
&I_k^{(3)}\eqdefa 2kN\int_{\R^3}\vt_k |\wt u_k|^{p-2}\ur_k r^{p-5}\,dx,
\qquad\ \ I_k^{(4)}
\eqdefa\int_{\R^3} \bigl(F^r_k\ur_k+F^z_k\uz_k\bigr) |\wt u_k|^{p-2}r^{p-3}\,dx,\\
&I_k^{(5)}
\eqdefa \int_{\R^3} \bigl(\wt u_k\cdot\wt\nabla P_k\bigr) |\wt u_k|^{p-2}r^{p-3}\,dx.
\end{split}\end{equation}

In what follows, we shall handle the estimate of $|I_k^{(j)}|$ term by term.
Firstly, since $r^{\f{p-3}2}|\wt u_k|^{\f p2}$
is axisymmetric, it follows from Sobolev embedding theorem that
\beq \label{S2eq3}\bigl\|r^{\f{p-3}2}|\wt u_k|^{\f p2}\bigr\|_{L^6}
\lesssim\bigl\|\nabla\bigl(r^{\f{p-3}2}|\wt u_k|^{\f p2}\bigr)\bigr\|_{L^2}
\lesssim\bigl\|r^{\f{p-3}2}|\wt\nabla\wt u_k||\wt u_k|^{\f p2-1}\bigr\|_{L^2}
+\bigl\|r^{\f{p-5}2}|\wt u_k|^{\f p2}\bigr\|_{L^2},\eeq
from which,
we infer
\begin{equation}\begin{split}\label{3.2}
|I_k^{(1)}|&\lesssim\|u_0\|_{L^3}
\bigl\|r^{\f{p-3}2}|\wt\nabla\wt u_k||\wt u_k|^{\f p2-1}\bigr\|_{L^2}
\bigl\|r^{\f{p-3}2}|\wt u_k|^{\f p2}\bigr\|_{L^6}\\
&\lesssim\|u_0\|_{L^3}\Bigl(
\bigl\|r^{\f{p-3}2}|\wt\nabla\wt u_k||\wt u_k|^{\f p2-1}\bigr\|_{L^2}^2
+\bigl\|r^{\f{p-5}2}|\wt u_k|^{\f p2}\bigr\|_{L^2}^2\Bigr).
\end{split}\end{equation}

For $I_k^{(2)}$, we get, by using integration by parts and the 4-th equation of \eqref{eqtuk}, that
\begin{align*}
I_k^{(2)}=&2\pi\int_0^\infty\int_{\R}(\wt{u}_k\cdot\wt\nabla u_0)
\cdot|\wt u_k|^{p-2}\wt u_k r^{p-2}\,dr\,dz\\
=&-\int_{\R^3}\Bigl( u_0\cdot\bigl(\wt u_k\cdot\wt\nabla\wt u_k\bigr)
+(u_0\cdot\wt u_k)\bigl((p-3)\f{\ur_k}r
-kN\f{\vt_k}r\bigr) \Bigr)|\wt u_k|^{p-2}r^{p-3}\,dx\\
&-(p-2)\int_{\R^3}u_0\cdot\wt u_k\bigl(\wt u_k\cdot\wt\nabla|\wt u_k|\bigr)
|\wt u_k|^{p-3}r^{p-3}\,dx.
\end{align*}
By applying H\"older inequality and Young inequality, we find
\begin{equation}\begin{split}\label{3.3}
|I_k^{(2)}|\lesssim &\|u_0\|_{L^3}
\bigl\|r^{\f{p-3}2}|\wt u_k|^{\f p2}\bigr\|_{L^6}\Bigl(
\bigl\|r^{\f{p-3}2}|\wt\nabla\wt u_k||\wt u_k|^{\f p2-1}\bigr\|_{L^2}
+\bigl\|r^{\f{p-5}2}|\wt u_k|^{\f p2}\bigr\|_{L^2}\\
&+kN
\bigl\|r^{\f{p-5}2}|\wt u_k|^{\f p2}\bigr\|_{L^2}^{1-\f2p}
\bigl\|r^{\f{p-5}2}|\vt_k|^{\f p2}\bigr\|_{L^2}^{\f2p}\Bigr)\\
\lesssim &\|u_0\|_{L^3}
\Bigl(\bigl\|r^{\f{p-3}2}|\wt\nabla\wt u_k||\wt u_k|^{\f p2-1}\bigr\|_{L^2}^2
+(kN)^{2}\bigl\|r^{\f{p-5}2}|\wt u_k|^{\f p2}\bigr\|_{L^2}^2\\
&+(kN)^{\f p2+2}\bigl\|r^{\f{p-5}2}|\vt_k|^{\f p2}\bigr\|_{L^2}^2\Bigr).
\end{split}\end{equation}

And $I_k^{(3)}$ can be handled as follows
\begin{equation}\begin{split}\label{3.4}
|I_k^{(3)}|
&\lesssim(kN)^{\f12}
\Bigl(\bigl\|r^{\f{p-5}2}|\wt u_k|^{\f p2}\bigr\|_{L^2}^2\Bigr)^{\f{p-1}p}
\Bigl((kN)^{\f p2}\bigl\|r^{\f{p-5}2}|\vt_k|^{\f p2}\bigr\|_{L^2}^2\Bigr)^{\f1p}\\
&\lesssim(kN)^{-\f32}
\Bigl((kN)^2\bigl\|r^{\f{p-5}2}|\wt u_k|^{\f p2}\bigr\|_{L^2}^2
+(kN)^{\f p2+2}\bigl\|r^{\f{p-5}2}|\vt_k|^{\f p2}\bigr\|_{L^2}^2\Bigr).
\end{split}\end{equation}

By substituting the estimates \eqref{3.2}-\eqref{3.4}
into  \eqref{3.1} and multiplying the inequalities
by $N^{-2\al_p}$ for $k=1$
and by $(kN)^{2\beta_p}$ for $k\geq2$,  and then integrating them with respect to time over $[0,t],$ and finally
 summing up the resulting inequalities for $k\in\N^+,$ we achieve
\begin{equation}\label{3.5}
\begin{split}
E_p^{r,z}(t)\leq E_{p}^{r,z}(0)
+ &C\bigl(\|u_0\|_{L^\infty_t(L^3)}+N^{-\f32}\bigr)E_p(t)\\
&+C\sum_{j=4}^5\int_0^t\Bigl(N^{-2\al_p}|I^{(j)}_1|
+\sum_{k\geq2}(kN)^{2\beta_p} |I_k^{(j)}|\Bigr)\,dt'.
\end{split}
\end{equation}

The estimate of the last term in \eqref{3.5} relies on the following lemmas:

\begin{lem}\label{S2lem1}
{\sl For $I_k^{(4)}$ in \eqref{3.1}, one has
\begin{equation}\label{3.19}
\int_0^t\Bigl(N^{-2\al_p}|I^{(4)}_1|
+\sum_{k\geq2}(kN)^{2\beta_p} |I_k^{(4)}|\Bigr)\,dt'
\leq C E_p^{1+\f1p}.
\end{equation}}
\end{lem}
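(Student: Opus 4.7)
The plan is to expand $I_k^{(4)}$ via \eqref{eqtukq} into a double sum over $(k_1,k_2)\in(\N^+)^2$ with $k\in\{k_1+k_2,|k_1-k_2|\}$ of trilinear expressions in the Fourier coefficients, paired with the test weight $|\wt u_k|^{p-2}\wt u_k\,r^{p-3}$. Since $F^r_k$ and $F^z_k$ contribute two velocity-type factors and the test weight contributes $p-1$, each term is of total degree $p+1$ in the velocity coefficients; distributing this via H\"older's inequality among the norms defining $E_p$, which is homogeneous of degree $p$, yields the expected scaling $E_p^{(p+1)/p}=E_p^{1+1/p}$.

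I would split the treatment into three sub-cases according to the species of terms in \eqref{eqtukq}: the convection pieces $\wt u_{k_1}\cdot\wt\nabla\wt u_{k_2}$; the quadratic swirl pieces $r^{-1}\vt_{k_1}\vt_{k_2}$; and the mixed pieces $r^{-1}k_2 N\vt_{k_1}\wt u_{k_2}$ or $r^{-1}k_2 N\vt_{k_1}\uz_{k_2}$. For each I would apply H\"older's inequality placing $\wt u_{k_1}$ (or $\vt_{k_1}$) in the scale-invariant norm $\bigl\|r^{1-3/p}\,\cdot\,\bigr\|_{L^{3p/(p-3)}}$, the derivative factor $\wt\nabla\wt u_{k_2}$ (or the substitute $r^{-1}\vt_{k_2}$, whose $L^2$ version with weight $r^{(p-3)/2}|\vt_{k_2}|^{p/2-1}$ sits inside $E_p^\theta$) against the corresponding $L^2$ gradient norm of $E_p$, and the remaining $|\wt u_k|^{p-1}$ against $L^{p/(p-1)}$ with the appropriate power of $r$. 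The Sobolev embedding \eqref{S2eq3} then converts the high-Lebesgue-exponent norm of $r^{(p-3)/2}|\wt u_{k_1}|^{p/2}$ into the $L^2$ gradient norm appearing in $E_p^{r,z}$, and the matching $N$-weights come from the factors $(kN)^{2\beta_p}$ and $N^{-2\alpha_p}$ already built into $E_p$.

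The discrete step is to control the convolution sums $\sum_{k_1+k_2=k}$ and $\sum_{|k_1-k_2|=k}$ by Young's or Cauchy--Schwarz inequality in the $k$-variable. Since $\beta_p>1$ by \eqref{condialbeta}, the sequence $(kN)^{-2\beta_p}$ is summable, so the outer weight $(kN)^{2\beta_p}$ can be split via $kN\leq(k_1+k_2)N$ and absorbed into a bounded discrete convolution operator acting on the weighted sequences. The dangerous factor $k_2N$ in the mixed terms is exactly absorbed by the extra $\sqrt{kN}$ sitting in the definition $\varpi_k\eqdef\bigl(\ur_k,\vr_k,\uz_k,\vz_k,\sqrt{kN}\ut_k,\sqrt{kN}\vt_k\bigr)$ from \eqref{S1eqw}, which is why $E_p^\theta$ carries the extra factor $(kN)^{p/2}$ relative to $E_p^{r,z}$.

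The main obstacle I anticipate is the asymmetry between the weights $N^{-2\alpha_p}$ at $k=1$ and $(kN)^{2\beta_p}$ at $k\geq2$: interactions such as $k_1=1,\ k_2=k-1$ place the tiny weight $N^{-2\alpha_p}$ on the $k_1=1$ side while requiring the large output weight $(kN)^{2\beta_p}$ after summation. Reconciling this requires exploiting the extra factor $N^{2}$ multiplying $\bigl\|r^{(p-5)/2}|\wt u_1|^{p/2}\bigr\|_{L^2_t(L^2)}^2$ inside $E_p^{r,z}$, which effectively raises the $k=1$ bookkeeping exponent by $2$; the constraint $0<\alpha_p<(p-3-2\beta_p)/4$ from \eqref{condialbeta} is precisely tight enough for this compensation to close without any residual negative power of $N$.
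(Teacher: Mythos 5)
Your overall bookkeeping does track the paper's computation: the degree-$(p+1)$ homogeneity giving $E_p^{1+\f1p}$, the summability of the discrete convolutions coming from $\beta_p>1$, and the $N^2$-weighted $k=1$ norm together with $\al_p<\f{p-3-2\beta_p}4$ compensating the low-low interaction $k_1=k_2=1\to k=2$ are all genuine features of the paper's proof. But there is a gap at the very first analytic step: you propose to put the derivative factor $\wt\nabla\ur_{k_2}$ (or $\wt\nabla\uz_{k_2}$) directly ``against the corresponding $L^2$ gradient norm of $E_p$''. No such norm exists. The energy only controls $r^{\f{p-3}2}|\wt\nabla\wt u_{k_2}||\wt u_{k_2}|^{\f p2-1}$ in $L^2_t(L^2)$, i.e.\ the gradient tied to $\f p2-1$ companion powers of the \emph{same} coefficient, whereas in the integrand $\wt u_{k_1}\cdot\wt\nabla\ur_{k_2}\,\ur_k|\wt u_k|^{p-2}r^{p-3}$ the coefficient of index $k_2$ (for $k_2\neq k$) enters only through its bare gradient. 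Since $|\wt\nabla\wt u_{k_2}|=\bigl(|\wt\nabla\wt u_{k_2}||\wt u_{k_2}|^{\f p2-1}\bigr)|\wt u_{k_2}|^{1-\f p2}$ and negative powers of $|\wt u_{k_2}|$ are uncontrolled, the H\"older allocation you describe cannot be carried out.

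The paper removes this obstruction before any H\"older step: it integrates by parts in $(r,z)$, using the divergence relation $\pa_r\ur_{k_1}+\f{\ur_{k_1}}r+\pa_z\uz_{k_1}+k_1N\f{\vt_{k_1}}r=0$ from \eqref{eqtuk}, so that the derivative lands on the block $\ur_k|\wt u_k|^{p-2}r^{p-3}$, whose multiplicity $p-1$ in $\wt u_k$ is exactly what makes the $E_p$ structure match (producing $|\wt\nabla\wt u_k||\wt u_k|^{p-2}$ and $r^{-1}|\wt u_k|^{p-1}$ factors, the latter handled with \eqref{S2eq3}). This step is not cosmetic: it also generates the extra terms $r^{-1}\bigl((p-3)\ur_{k_1}-k_1N\vt_{k_1}\bigr)$, and the $k_1N\vt_{k_1}/r$ contribution is precisely where the $(kN)^{\f p2}$-enhanced norms of $E_p^\th$ enter — the place your ``extra $\sqrt{kN}$'' heuristic points to, except that a full power $k_1N$ (or $(k_1+k_2)N$, which in the paper's $I_k^{(4,3,2)}$ can be much larger than $kN$) must be distributed between the $(kN)^2$-weighted $r^{\f{p-5}2}$ norms and the $E_p^\th$ weights, rather than being exactly absorbed by a single square root. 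Without the preliminary integration by parts and the use of the divergence-free structure, the convection part of $I_k^{(4)}$ is not estimable by $E_p$ at all, so the proposal as written does not close.
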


\begin{lem}\label{S2lem2}
{\sl For $I_k^{(5)}$ in \eqref{3.1}, one has
\begin{equation}\begin{split}\label{3.46}
\int_0^t\Bigl(N^{-2\al_p}|I^{(5)}_1|
+\sum_{k\geq2}(kN)^{2\beta_p}& |I_k^{(5)}|\Bigr)\,dt'
\leq CE_p^{1+\f1p}+\bigl(\f14+C\|u_0\|_{L^\infty_t(L^3)}\bigr)E_p\\
+&CN^{-2}D^{\f{p+1}2}+CN^{-p}\bigl(\|u_0\|_{L^\infty_t(L^3)}
+N^{-\f{p}{2(p-5)}}\bigr)D^{\f{p}2},
\end{split}\end{equation}
for $D$ being given by \eqref{aprioriEp}.
}
\end{lem}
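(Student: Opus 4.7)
The overall plan is to integrate by parts in $I_k^{(5)}$ to remove the derivative from $P_k$, to express $P_k$ via the pressure Poisson equation $\cL_{kN}P_k=Q_k$, and then to invoke the weighted $L^p$-bounds of Propositions \ref{lemLm}--\ref{lemLmweightedRiesz}. The decay factor $(kN)^{-\sigma}$ coming from these propositions is the essential ingredient: it must dominate the $kN$-loss produced by the cylindrical divergence-free condition, which is precisely where the usual Cartesian pressure estimate fails to transfer directly.

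First I would integrate by parts in
\[
I_k^{(5)}=\int\wt u_k\cdot\wt\nabla P_k\,|\wt u_k|^{p-2}r^{p-3}\,dx,
\]
using the fourth equation of \eqref{eqtuk}, which gives $\pa_r\ur_k+\pa_z\uz_k=-\f{\ur_k}{r}-kN\f{\vt_k}{r}$. A direct computation then yields
\[
|I_k^{(5)}|\lesssim \int|P_k|\Bigl(|\wt u_k|^{p-2}|\wt\nabla\wt u_k|\,r^{p-3}+|\wt u_k|^{p-1}r^{p-4}+kN\,|\wt u_k|^{p-2}|\vt_k|r^{p-4}\Bigr)dx.
\]
The last summand is the dangerous one, since it carries the explicit factor $kN$; it is precisely here that the gain from the elliptic estimate of $P_k$ must be spent.

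To estimate $P_k$, I would apply the operators $\pa_r+\f{1}{r},\;-\f{kN}{r},\;\pa_z$ to the $\ur_k$-, $\vt_k$-, $\uz_k$-equations of \eqref{eqtuk} respectively and add them. Thanks to the divergence-free condition, the time-derivative and viscous-$\D$ contributions collapse, leaving an identity of the form $\cL_{kN}P_k=Q_k$, where $Q_k$ is a sum of quadratic interactions schematically of the form $(\dive\,\dive)(\wt u_{k_1}\wt u_{k_2},\wt u_{k_1}\vt_{k_2},\vt_{k_1}\vt_{k_2})$ indexed by $k_1\pm k_2=\pm k$, together with $u_0$-interactions inherited from $F^r_k,F^\th_k,F^z_k$. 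For each such contribution, depending on its $r$-weight and on how many $\wt\nabla$'s versus $\f{1}{r}$-factors it carries, I invoke the appropriate bound from Propositions \ref{lemLm}, \ref{lemLmp}, \ref{lemLmRiesz} or \ref{lemLmweightedRiesz}; each yields a gain $(kN)^{-\sigma}$, $\sigma>0$, multiplied by a weighted $L^q$-norm of the quadratic input, and the latter is controlled by H\"older's inequality in terms of the summands of $E_p$ and $D$, with \eqref{S2eq3} supplying the Sobolev interpolation needed exactly as in \eqref{3.2}--\eqref{3.3}.

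Finally, after weighting by $N^{-2\al_p}$ for $k=1$ and $(kN)^{2\beta_p}$ for $k\geq 2$, summing over $k$ (the convolution over $k_1,k_2$ is absolutely summable thanks to $\beta_p>1$ from \eqref{condialbeta}) and integrating in time, one obtains the four contributions on the right-hand side of \eqref{3.46}: the purely cubic self-interactions of Fourier modes yield $CE_p^{1+\f1p}$; the interactions involving $u_0$ give rise both to $C\|u_0\|_{L^\infty_t(L^3)}E_p$ and to the $N^{-p}\|u_0\|_{L^\infty_t(L^3)}D^{p/2}$ piece; and the high-frequency interactions in which the dangerous $kN$-factor is balanced by the elliptic gain produce $N^{-2}D^{(p+1)/2}$ and $N^{-(p+\f{p}{2(p-5)})}D^{p/2}$. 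The main obstacle will be the bookkeeping: in each application of Propositions \ref{lemLm}--\ref{lemLmweightedRiesz}, the exponents $(p,q,\al,\beta)$ must be chosen so that the hypothesis \eqref{a15} is satisfied and so that the gain $(kN)^{-\sigma}$ is simultaneously (i) strong enough to absorb the loss $kN$ in the IBP identity, (ii) strong enough for the $k$- and $(k_1,k_2)$-sums to converge, and (iii) small enough in magnitude to keep the coefficient in front of $E_p$ below $\f14$, so that together with the analogous $\f14$ from Lemma \ref{S2lem1} and the prefactor $\f12-C\|u_0\|_{L^\infty_t(L^3)}$ on the left of \eqref{aprioriEp} the a priori estimate closes.
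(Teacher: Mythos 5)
Your overall strategy is the same as the paper's: take the divergence of the $k$-th Fourier mode system to derive a Poisson-type equation $\cL_{kN}P_k=Q_k$, invert with the elliptic estimates of Propositions \ref{lemLm}--\ref{lemLmweightedRiesz} to gain negative powers of $kN$, integrate by parts in $I_k^{(5)}$, and sum with the weights $N^{-2\al_p}$, $(kN)^{2\beta_p}$. That much is right, and the observation that the $(\pa_t)$-part of the source is killed by the divergence-free relation is precisely the paper's observation that the third line of \eqref{3.20} vanishes.

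However, the claim that the ``viscous-$\D$ contributions collapse'' is a genuine gap. After subtracting off the divergence of $\upsilon_k$, the Laplacian contribution leaves a \emph{linear} residual in $\upsilon_k$ --- the commutators $\bigl[\D,\pa_r+\f1r\bigr]\ur_k$, $\bigl[\D,\f{kN}{r}\bigr]\vt_k$, together with a few lower-order pieces such as $-\f{2}{r^3}\ur_k$, $2kN\bigl(\pa_r+\f1r\bigr)\f{\vt_k}{r^2}$, $-\f1{r^2}\pa_z\uz_k$; this is the paper's $\cA_k^{(3)}$. Your description of $Q_k$ as a sum of quadratic convolutions omits these linear terms entirely. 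They are not a cosmetic omission: the piece $\Psi^{(3)}$ generated by $\cA_k^{(3)}$ is linear in the solution, is homogeneous of the same degree as $E_p$, and after Young's inequality produces both the coefficient $\f14$ in front of $E_p$ on the right of \eqref{3.46} and the $CN^{-p-\f{p}{2(p-5)}}D^{p/2}$ term. Your item (iii) correctly anticipates the need for a small prefactor in front of $E_p$, but your proposed decomposition contains no term that could produce it --- a purely quadratic $Q_k$ would yield only $E_p^{1+\f1p}$- and $D$-type terms after interpolation, not a straight multiple of $E_p$. Without $\cA_k^{(3)}$ the a priori bound does not reproduce \eqref{3.46}.

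Two smaller inaccuracies: the $u_0$-interactions in $Q_k$ come from the material derivative $D_{0,t}$ and the $\wt u_k\cdot\wt\nabla u_0$ terms of \eqref{eqtuk}, not from $F^r_k,F^\th_k,F^z_k$ (those sums run over $k_1,k_2\in\N^+$ and do not involve $u_0$); these give the paper's $\cA_k^{(1)},\cA_k^{(2)}$, and it is from there that the $C\|u_0\|_{L^\infty_t(L^3)}E_p$ and $CN^{-p}\|u_0\|_{L^\infty_t(L^3)}D^{p/2}$ terms arise. Also, in your IBP identity the summand $kN\,|\wt u_k|^{p-2}|\vt_k|r^{p-4}$ is redundant: $\pa_r\ur_k+\pa_z\uz_k$ is already bounded by $|\wt\nabla\wt u_k|$, and the $kN$-loss you need to beat is not located in the IBP but in $\cA_k^{(3)}$ and $\cA_k^{(4)}$.
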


We admit the above lemmas for the time being and continue our proof of Proposition \ref{propEp}.
By substituting the estimates \eqref{3.19} and \eqref{3.46}
into \eqref{3.5}, we find
\begin{equation}\begin{split}\label{ineqEpr}
E_p^{r,z}\leq E_{p}^{r,z}(0)
+\bigl(\f14+&C\|u_0\|_{L^\infty_t(L^3)}+CN^{-\f32}\bigr)E_p
+CE_p^{1+\f1p}\\
&+CN^{-2}D^{\f{p+1}2}+CN^{-p}\bigl(\|u_0\|_{L^\infty_t(L^3)}
+N^{-\f{p}{2(p-5)}}\bigr)D^{\f{p}2}.
\end{split}\end{equation}
Exactly along the same line, we can get the estimate for $E_p^\th$:
\begin{equation}\begin{split}\label{ineqEpth}
E_p^\th\leq E_{p}^\th(0)
+\bigl(\f14+&C\|u_0\|_{L^\infty_t(L^3)}+CN^{-\f12}\bigr)E_p
+CE_p^{1+\f1p}\\
&+CN^{-2}D^{\f{p+1}2}+CN^{-p}\bigl(\|u_0\|_{L^\infty_t(L^3)}
+N^{-\f{p}{2(p-5)}}\bigr)D^{\f{p}2}.
\end{split}\end{equation}
By summing up the estimates \eqref{ineqEpr} and \eqref{ineqEpth}
and taking $N$ to be so large that $CN^{-\f12}\leq \f14$, we achieve \eqref{aprioriEp}.
This completes the proof of Proposition \ref{propEp}.
\end{proof}

It remains to prove Lemmas \ref{S2lem1} and \ref{S2lem2}, which we present below.

\begin{proof}[{\bf Proof of Lemma \ref{S2lem1}}]
Here we shall only  present the estimate of
$F^r_k\ur_k$ part in $I_k^{(4)}$. The
$F^z_k\uz_k$ part can be handled exactly along the same line.
We first get, by using integration by parts and the fourth equation of \eqref{eqtuk}, that
\begin{align*}
&\sum_{(k_1,k_2)\in\Omega_k}\Bigl|\int_{\R^3}\wt u_{k_1}\cdot\wt\nabla\ur_{k_2}
\cdot |\wt u_k|^{p-2}\ur_kr^{p-3}\,dx\Bigr|\\
&=\sum_{(k_1,k_2)\in\Omega_k}\Bigl|\int_{\R^3}
\ur_{k_2}\Bigl(\wt u_{k_1}\cdot\wt\nabla\ur_k
+(p-2)\ur_k|\wt{u}_k|^{-1}\wt u_{k_1}
\cdot\wt\nabla|\wt u_k|\\
&\qquad\qquad\qquad\qquad\qquad
+r^{-1}\bigl((p-3)\ur_{k_1}-k_1Nv^\th_{k_1}\bigr)
\ur_k\Bigr)|\wt{u}_k|^{p-2}r^{p-3}\,dx\Bigr|\\
&\lesssim\sum_{(k_1,k_2)\in\Omega_k}\int_{\R^3}
\Bigl(|\wt u_{k_1}||\wt u_{k_2}|\bigl(|\wt\nabla\wt u_k|
+|r^{-1}\wt u_k|\bigr)
+k_1N r^{-1}|v^\th_{k_1}||\wt u_{k_2}|
|\wt u_k|\Bigr)|\wt u_k|^{p-2}r^{p-3}\,dx.
\end{align*}
Here and in all that follows, we always denote
\begin{equation}\label{defOmk}
\Omega_k\eqdef\bigl\{(k_1,k_2)\in\N^+\times\N^+
:|k_1-k_2|=k\text{ or }k_1+k_2=k\bigr\}.
\end{equation}
So that in view of \eqref{eqtukq}, we  decompose the estimate of the $F^r_k\ur_k$ part in $I_k^{(4)}$ as follows
\begin{equation}\label{3.6}\begin{split}
\Bigl|\int_{\R^3}& F^r_k\ur_k
|\wt u_k|^{p-2}r^{p-3}\,dx\Bigr|
\lesssim I_k^{(4,1)}+I_k^{(4,2)}
+I_k^{(4,3)} \with\\
I_k^{(4,1)}
&=\sum_{(k_1,k_2)\in\Omega_k}\int_{\R^3}
|\wt u_{k_1}||\wt u_{k_2}|\bigl(|\wt\nabla\wt u_k|
+|r^{-1}\wt u_k|\bigr)|\wt u_k|^{p-2}r^{p-3}\,dx,\\
 I_k^{(4,2)}&\eqdef
\sum_{(k_1,k_2)\in\Omega_k}\int_{\R^3}
|\vt_{k_1}||\vt_{k_2}|
|\wt u_k|^{p-1} r^{p-4}\,dx,\\
 I_k^{(4,3)}
&\eqdef \sum_{(k_1,k_2)\in\Omega_k}(k_1+k_2)N\int_{\R^3}
{|\vt_{k_1}||\wt u_{k_2}}||\wt u_k|^{p-1}
 r^{p-4}\,dx.
\end{split}\end{equation}

Below we shall handle the estimate term by term above.

\no$\bullet$~{\bf The estimate of $I_k^{(4,1)}$ when $k=1$.}

There is no $k_1,k_2\in\N^+$ satisfying
$k_1+k_2=1$. So that in this case, we have
\begin{align*}
I_1^{(4,1)}=2\sum_{j=1}^\infty\int_{\R^3}
|\wt u_{j}||\wt u_{j+1}|\bigl(|\wt\nabla\wt u_j|
+|r^{-1}\wt u_j|\bigr)|\wt u_j|^{p-2}r^{p-3}\,dx.
\end{align*}
Then we get, by using H\"older's inequality, that
\beq \label{S2eq4}
\begin{split}
&N^{-2\al_p}I_1^{(4,1)}
\lesssim\Bigl(N^{1-\al_p}\bigl\|r^{\f{p-5}2}|\wt u_1|^{\f p2}\bigr\|_{L^2}\Bigr)^{\f12-\f3{2p}}
\Bigl(N^{-\al_p}\bigl\|r^{\f{p-3}2}|\wt u_1|^{\f p2}\bigr\|_{L^6}\Bigr)^{\f7{2p}-\f12}\\
&\quad\times\Bigl(N^{1+\beta_p}\bigl\|r^{\f{p-5}2}|\wt u_2|^{\f p2}\bigr\|_{L^2}\Bigr)^{\f12-\f3{2p}}
\Bigl(N^{\beta_p}\bigl\|r^{\f{p-3}2}|\wt u_2|^{\f p2}\bigr\|_{L^6}\Bigr)^{\f7{2p}-\f12}\\
&\quad\times\Bigl(N^{-\al_p}\bigl\|r^{\f{p-3}2}|\wt\nabla\wt u_1|
|\wt u_1|^{\f p2-1}\bigr\|_{L^2}
+N^{1-\al_p}\bigl\|r^{\f{p-5}2}|\wt u_1|^{\f p2}\bigr\|_{L^2}\Bigr)\\
&\quad\times\Bigl(N^{-\al_p}\bigl\|r^{\f{p-3}2}|\wt u_1|^{\f p2}\bigr\|_{L^2}\Bigr)^{\f2p}
\Bigl(N^{-\al_p}\bigl\|r^{\f{p-3}2}|\wt u_1|^{\f p2}\bigr\|_{L^6}\Bigr)^{1-\f4p}
N^{-1+\f3p-\f2p\beta_p}\\
&\ +\sum_{j=2}^\infty
\Bigl((jN)^{1+\beta_p}\bigl\|r^{\f{p-5}2}|\wt u_j|^{\f p2}\bigr\|_{L^2}\Bigr)^{\f12-\f3{2p}}
\Bigl((jN)^{\beta_p}\bigl\|r^{\f{p-3}2}|\wt u_j|^{\f p2}\bigr\|_{L^6}\Bigr)^{\f7{2p}-\f12}\\
&\quad\times\Bigl(((j+1)N)^{1+\beta_p}\bigl\|r^{\f{p-5}2}|\wt u_{j+1}|^{\f p2}\bigr\|_{L^2}\Bigr)^{\f12-\f3{2p}}
\Bigl(((j+1)N)^{\beta_p}\bigl\|r^{\f{p-3}2}|\wt u_{j+1}|^{\f p2}\bigr\|_{L^6}\Bigr)^{\f7{2p}-\f12}\\
&\quad\times\Bigl(N^{-\al_p}\bigl\|r^{\f{p-3}2}|\wt\nabla\wt u_1|
|\wt u_1|^{\f p2-1}\bigr\|_{L^2}
+N^{1-\al_p}\bigl\|r^{\f{p-5}2}|\wt u_1|^{\f p2}\bigr\|_{L^2}\Bigr)\\
&\quad\times\Bigl(N^{-\al_p}\bigl\|r^{\f{p-3}2}|\wt u_1|^{\f p2}\bigr\|_{L^2}\Bigr)^{\f2p}
\Bigl(N^{-\al_p}\bigl\|r^{\f{p-3}2}|\wt u_1|^{\f p2}\bigr\|_{L^6}\Bigr)^{1-\f4p}
 N^{-\f2p\al_p}(jN)^{-1+\f3p-\f4p\beta_p},
\end{split}\eeq
where we have used the fact that $\f{j+1}j\in]1,\f32]$ for every $j\geq2$.

Observing that for $5<p<6$ and $\beta_p>1,$ one has
$$\left(-1+\f3p-\f4p\beta_p\right)\times \f{p}{p-2}<-\f{p+1}{p-2}<-\f74\Rightarrow
\sum_{j=2}^\infty j^{\left(-1+\f3p-\f4p\beta_p\right)\f{p}{p-2}}<\infty.$$
Then we get, by integrating \eqref{S2eq4} in time over $[0,t]$ and using \eqref{S2eq3},  that
\begin{equation}\begin{split}\label{3.7}
N^{-2\al_p}\int_0^t I^{(4,1)}_1\,dt'
\lesssim& N^{-1+\f3p-\f2p\beta_p}E_p^{1+\f1p}\\
&+N^{-1+\f3p-\f4p\beta_p-\f2p\al_p}E_p^{1+\f1p}\times
\Bigl(\sum_{j=2}^\infty j^{\left(-1+\f3p-\f4p\beta_p\right)\f{p}{p-2}}\Bigr)^{1-\f2p} \\
\lesssim& N^{-1+\f3p-\f2p\beta_p}E_p^{1+\f1p}.
\end{split}\end{equation}

\no$\bullet$~{\bf The estimate of $I_k^{(4,1)}$ when $k=2$.}

The estimate of $I^{(4,1)}_2$ is similar to that of $I^{(4,1)}_1$, except now the $k_1+k_2=2$ part in the summation is nontrivial.
Indeed it allows $k_1=k_2=1$, which corresponds to the term
\begin{align*}
&(2N)^{2\beta_p}\int_0^t\int_{\R^3}
|\wt u_1||\wt u_1|\bigl(|\wt\nabla\wt u_2|
+|r^{-1}\wt u_2|\bigr)|\wt u_2|^{p-2}r^{p-3}\,dxdt'\\
&\lesssim N^{-1+\f3p+\f2p\beta_p+\f4p\al_p}
\int_0^t\Bigl(N^{1-\al_p}\bigl\|r^{\f{p-5}2}|\wt u_1|^{\f p2}\bigr\|_{L^2}\Bigr)^{1-\f3{p}}
\Bigl(N^{-\al_p}\bigl\|r^{\f{p-3}2}|\wt u_1|^{\f p2}\bigr\|_{L^6}\Bigr)^{\f7{p}-1}\\
&\quad\times\Bigl(N^{\beta_p}\bigl\|r^{\f{p-3}2}|\wt\nabla\wt u_2|
|\wt u_2|^{\f p2-1}\bigr\|_{L^2}
+N^{1+\beta_p}\bigl\|r^{\f{p-5}2}
|\wt u_2|^{\f p2}\bigr\|_{L^2}\Bigr)\\
&\quad\times\Bigl(N^{\beta_p}\bigl\|r^{\f{p-3}2}|\wt u_2|^{\f p2}\bigr\|_{L^2}\Bigr)^{\f2p}
\Bigl(N^{\beta_p}\bigl\|r^{\f{p-3}2}|\wt u_2|^{\f p2}\bigr\|_{L^6}\Bigr)^{1-\f4p}\,dt'\\
&\lesssim N^{-1+\f3p+\f2p\beta_p+\f4p\al_p}E_p^{1+\f1p}.
\end{align*}

As for the estimate of the other part in $I^{(4,1)}_2$, namely
\begin{equation}\begin{split}\label{3.7.1}
&(2N)^{2\beta_p}\sum_{|k_1-k_2|=2}
\int_0^t\int_{\R^3}
|\wt u_{k_1}||\wt u_{k_2}|\bigl(|\wt\nabla\wt u_2|
+|r^{-1}\wt u_2|\bigr)|\wt u_2|^{p-2}r^{p-3}\,dx\,dt'\\
&=2(2N)^{2\beta_p}\sum_{j=1}^\infty \int_0^t
\int_{\R^3}|\wt{u}_j||\wt u_{j+2}|
\bigl(|\wt\nabla\wt u_2|
+|r^{-1}\wt u_2|\bigr)
|\wt{u}_2|^{p-2}r^{p-3}\,dx\,dt',
\end{split}\end{equation}
we observe that \eqref{3.7.1} has the same structure as that of $N^{-2\al_p}\int_0^t |I^{(4,1)}_1|\,dt'$.
Then it follows from the same derivation of \eqref{3.7} that \eqref{3.7.1} can also be bounded by $N^{-1+\f3p-\f2p\beta_p}E_p^{1+\f1p}.$

By summarizing the above estimates  and using the fact that
$-1+\f3p+\f2p\beta_p+\f4p\al_p<0,$ which is guaranteed
by the assumption $\al_p<\f{p-3-2\beta_p}4$, we achieve
\begin{equation}\begin{split}\label{3.8}
(2N)^{2\beta_p}\int_0^t I^{(4,1)}_2\,dt'
&\lesssim \bigl(N^{-1+\f3p+\f2p\beta_p+\f4p\al_p} +N^{-1+\f3p-\f2p\beta_p}\bigr)E_p^{1+\f1p}\\
&\lesssim E_p^{1+\f1p}.
\end{split}\end{equation}

\no$\bullet$~{\bf The estimate of $I_k^{(4,1)}$ when $k\geq3$.}

The case for $k\geq3$ is more complicated.
And we shall decompose $I_k^{(4,1)}$ into three parts.
{\bf Case 1:} The terms with $k_1$ or $k_2$ equals $1$, which we denote as
$$I_k^{(4,1,1)}\eqdef\int_{\R^3}|\wt u_{1}||\wt u_{k\pm1}|\bigl(|\wt\nabla\wt u_k|
+|r^{-1}\wt u_k|\bigr)|\wt u_k|^{p-2}r^{p-3}\,dx.$$
By using H\"older's inequality, we find
\begin{align*}
(k&N)^{2\beta_p}I_k^{(4,1,1)}
\lesssim
\Bigl(N^{1-\al_p}\bigl\|r^{\f{p-5}2}|\wt u_1|^{\f p2}\bigr\|_{L^2}\Bigr)^{\f12-\f3{2p}}
\Bigl(N^{-\al_p}\bigl\|r^{\f{p-3}2}|\wt u_1|^{\f p2}\bigr\|_{L^6}\Bigr)^{\f7{2p}-\f12}\\
&\times\Bigl(\bigl((k\pm1)N\bigr)^{1+\beta_p}
\bigl\|r^{\f{p-5}2}|\wt u_{k\pm1}|^{\f p2}\bigr\|_{L^2}\Bigr)^{\f12-\f3{2p}}
\Bigl(\bigl((k\pm1)N\bigr)^{\beta_p}
\bigl\|r^{\f{p-3}2}|\wt u_{k\pm1}|^{\f p2}\bigr\|_{L^6}\Bigr)^{\f7{2p}-\f12}\\
&\times\Bigl((kN)^{\beta_p}\bigl\|r^{\f{p-3}2}|\wt\nabla\wt u_{k}|
|\wt u_{k}|^{\f p2-1}\bigr\|_{L^2}
+(kN)^{1+\beta_p}\bigl\|r^{\f{p-5}2}
|\wt u_{k}|^{\f p2}\bigr\|_{L^2}\Bigr)\\
&\times\Bigl((kN)^{\beta_p}\bigl\|r^{\f{p-3}2}|\wt u_{k}|^{\f p2}\bigr\|_{L^2}\Bigr)^{\f2p}
\Bigl((kN)^{\beta_p}\bigl\|r^{\f{p-3}2}|\wt u_{k}|^{\f p2}\bigr\|_{L^6}\Bigr)^{1-\f4p}\\
&\times(kN)^{-\f12+\f3{2p}}
 N^{-\f12+\f3{2p}+\f2p\al_p},
\end{align*}
here we  used the fact that $\f{k\pm1}k\in[\f23,\f43]$
for $k\geq3$.
By integrating the above inequality over $[0,t]$  and then summing up the resulting inequalities for $k\geq3,$ we obtain
\begin{align*}
\sum_{k\geq3}&(kN)^{2\beta_p}\int_0^t I_k^{(4,1,1)}\,dt'
\lesssim\Bigl(N^{2-2\al_p}\bigl\|r^{\f{p-5}2}|\wt u_1|^{\f p2}\bigr\|_{L^2_t(L^2)}^2\Bigr)^{\f14-\f3{4p}}\\
&\times\Bigl(N^{-2\al_p}\bigl\|r^{\f{p-3}2}|\wt u_1|^{\f p2}\bigr\|_{L^2_t(L^6)}^2\Bigr)^{\f7{4p}-\f14}\Bigl(\sum_{k\geq3}\bigl((k\pm1)N\bigr)^{2+2\beta_p}
\bigl\|r^{\f{p-5}2}|\wt u_{(k\pm1)}|^{\f p2}\bigr\|_{L^2_t(L^2)}^2\Bigr)^{\f14-\f3{4p}}\\
&\times\Bigl(\sum_{k\geq3}\bigl((k\pm1)N\bigr)^{2\beta_p}
\bigl\|r^{\f{p-3}2}|\wt u_{(k\pm1)}|^{\f p2}\bigr\|_{L^2_t(L^6)}^2\Bigr)^{\f7{4p}-\f14}\\
&\times\Bigl(\sum_{k\geq3}(kN)^{2\beta_p}\bigl\|r^{\f{p-3}2}|\wt\nabla\wt u_{k}|
|\wt u_{k}|^{\f p2-1}\bigr\|_{L^2_t(L^2)}^2
+(kN)^{2+2\beta_p}\bigl\|r^{\f{p-5}2}
|\wt u_{k}|^{\f p2}\bigr\|_{L^2_t(L^2)}^2\Bigr)^{\f12}\\
&\times\Bigl(\sum_{k\geq3}(kN)^{2\beta_p}\bigl\|r^{\f{p-3}2}|\wt u_{k}|^{\f p2}\bigr\|_{L^\infty_t(L^2)}^2\Bigr)^{\f1p}
\Bigl(\sum_{k\geq3}(kN)^{2\beta_p}\bigl\|r^{\f{p-3}2}|\wt u_{k}|^{\f p2}\bigr\|_{L^2_t(L^6)}^2\Bigr)^{\f12-\f2p}\\
&\times\sup_{k\geq3}k^{-\f12+\f3{2p}}
 N^{-1+\f3{p}+\f2p\al_p}.
\end{align*}
Due to $-\f12+\f3{2p}<0,$ we have
$\sup_{k\geq3}k^{-\f12+\f3{2p}}
=3^{-\f12+\f3{2p}}.$ As a result, it comes out
\begin{equation}\begin{split}\label{3.9}
\sum_{k\geq3} (kN)^{2\beta_p}\int_0^t I_k^{(4,1,1)}\,dt'
\lesssim N^{-1+\f3p+\f2p\al_p}E_p^{1+\f1p}.
\end{split}\end{equation}

\no{\bf Case 2:} The terms with $k_1$ or $k_2$ lying  in $[2,k-2],~k\geq4,$ and we denote this part as $$I_k^{(4,1,2)}\eqdefa\sum_{j=2}^{k-2}
\int_{\R^3}|\wt u_{j}||\wt u_{k\pm j}|
\bigl(|\wt\nabla\wt u_k|
+|r^{-1}\wt u_k|\bigr)|\wt u_k|^{p-2}r^{p-3}\,dx.$$
By applying H\"older's inequality, we obtain
\begin{equation}\begin{split}\label{3.10}
&(kN)^{2\beta_p}I_k^{(4,1,2)}\lesssim
\sum_{j=2}^{k-2}\Bigl((jN)^{1+\beta_p}\bigl\|r^{\f{p-5}2}|\wt u_j|^{\f p2}\bigr\|_{L^2}\Bigr)^{\f12-\f3{2p}}
\Bigl((jN)^{\beta_p}\bigl\|r^{\f{p-3}2}|\wt u_j|^{\f p2}\bigr\|_{L^6}\Bigr)^{\f7{2p}-\f12}\\
&\quad \times\Bigl(\bigl((k\pm j)N\bigr)^{1+\beta_p}
\bigl\|r^{\f{p-5}2}|\wt u_{k\pm j}|^{\f p2}\bigr\|_{L^2}\Bigr)^{\f12-\f3{2p}}
\Bigl(\bigl((k\pm j)N\bigr)^{\beta_p}
\bigl\|r^{\f{p-3}2}|\wt u_{k\pm j}|^{\f p2}\bigr\|_{L^6}\Bigr)^{\f7{2p}-\f12}\\
&\quad \times\Bigl((kN)^{\beta_p}\bigl\|r^{\f{p-3}2}|\wt\nabla\wt u_{k}|
|\wt u_{k}|^{\f p2-1}\bigr\|_{L^2}
+(kN)^{1+\beta_p}\bigl\|r^{\f{p-5}2}
|\wt u_{k}|^{\f p2}\bigr\|_{L^2}\Bigr)\\
&\quad \times\Bigl((kN)^{\beta_p}\bigl\|r^{\f{p-3}2}|\wt u_{k}|^{\f p2}\bigr\|_{L^2}\Bigr)^{\f2p}
\Bigl((kN)^{\beta_p}\bigl\|r^{\f{p-3}2}|\wt u_{k}|^{\f p2}\bigr\|_{L^6}\Bigr)^{1-\f4p}\\
&\quad \times\bigl(j (k\pm j)N^2\bigr)^{-\f12+\f3{2p}-\f2p\beta_p}
 (kN)^{\f2p\beta_p}.
\end{split}\end{equation}
And the terms containing $j$ in \eqref{3.10} can be bounded by
\begin{align*}
&\Bigl(\sum_{j=2}^{k-2}(jN)^{2+2\beta_p}\bigl\|r^{\f{p-5}2}|\wt u_j|^{\f p2}\bigr\|_{L^2}^2\Bigr)^{\f14-\f3{4p}}
\Bigl(\sum_{j=2}^{k-2}(jN)^{2\beta_p}\bigl\|r^{\f{p-3}2}|\wt u_j|^{\f p2}\bigr\|_{L^6}^2\Bigr)^{\f7{4p}-\f14}\\
&\ \times\Bigl(\sum_{j=2}^{k-2}\bigl((k\pm j)N\bigr)^{2+2\beta_p}
\bigl\|r^{\f{p-5}2}|\wt u_{(k\pm j)}|^{\f p2}\bigr\|_{L^2}^2\Bigr)^{\f14-\f3{4p}}
\Bigl(\sum_{j=2}^{k-2}\bigl((k\pm j)N\bigr)^{2\beta_p}
\bigl\|r^{\f{p-3}2}|\wt u_{(k\pm j)}|^{\f p2}\bigr\|_{L^6}^2\Bigr)^{\f7{4p}-\f14}\\
&\ \times\Bigl(\sum_{j=2}^{k-2}\bigl(j(k\pm j)N^2\bigr)^{\left(-\f12+\f3{2p}-\f2p\beta_p\right)\times\f{p}{p-2}}\Bigr)^{\f{p-2}p}.
\end{align*}
Observing that $\beta_p<\f{p-1}4$ ensures that
$$s\eqdef\left(-\f12+\f3{2p}-\f2p\beta_p\right)\times\f{p}{p-2}>-1,$$
so that there holds
\begin{align*}
\lim_{k\rightarrow\infty}\sum_{j=2}^{k-2}\Bigl(\f{j(k\pm j)}{k^2}\Bigr)^s\times\f1k
=\lim_{k\rightarrow\infty}\sum_{j=2}^{k-2}\Bigl(\f{j}k\bigl(1\pm\f jk\bigr)\Bigr)^s\times\f1k
=\int_0^1x^s(1\pm x)^s\,dx<\infty,
\end{align*}
which in particular implies for any $k\geq4$ that
$$\sum_{j=2}^{k-2} j^s(k\pm j)^s\lesssim k^{1+2s},$$
As a result, it comes out
\begin{equation}\label{3.11}
\Bigl(\sum_{j=2}^{k-2}\bigl(j(k\pm j)N^2\bigr)^{\left(-\f12+\f3{2p}-\f2p\beta_p\right)\f{p}{p-2}}\Bigr)^{\f{p-2}p}\lesssim k^{\f1p-\f4p\beta_p}N^{-1+\f3p-\f4p\beta_p}.
\end{equation}

By substituting the above estimates into \eqref{3.10} and  then integrating the resulting inequality in time over $[0,t],$ we find
\begin{equation}\begin{split}\label{3.12}
(kN)^{2\beta_p}\int_0^t& I_k^{(4,1,2)}\,dt'
\lesssim  k^{\f1p-\f2p\beta_p} N^{-1+\f3p-\f2p\beta_p}E_p^{\f2p}\\
&\times\Bigl((kN)^{2\beta_p}\bigl\|r^{\f{p-3}2}|\wt\nabla\wt u_{k}|
|\wt u_{k}|^{\f p2-1}\bigr\|_{L^2_t(L^2)}^2
+(kN)^{2+2\beta_p}\bigl\|r^{\f{p-5}2}
|\wt u_{k}|^{\f p2}\bigr\|_{L^2_t(L^2)}^2\Bigr)^{\f12}\\
&\times\Bigl((kN)^{2\beta_p}\bigl\|r^{\f{p-3}2}|\wt u_{k}|^{\f p2}\bigr\|_{L^\infty_t(L^2)}^2\Bigr)^{\f1p}
\Bigl((kN)^{2\beta_p}\bigl\|r^{\f{p-3}2}|\wt u_{k}|^{\f p2}
\bigr\|_{L^2_t(L^6)}^2\Bigr)^{\f12-\f2p}.
\end{split}\end{equation}
Our choice of $\beta_p>1$ ensures that
$$\Bigl(\f1p-\f2p\beta_p\Bigr) p<-1,$$
so that we get, by summing up  \eqref{3.12} for $k\geq 4,$ that
\begin{equation}\begin{split}\label{3.13}
\sum_{k\geq4}&(kN)^{2\beta_p} \int_0^tI_k^{(4,1,2)}\,dt'
\lesssim  \Bigl(\sum_{k\geq4}k^{\left(\f1p-\f2p\beta_p\right) p}\Bigr)^{\f1p} N^{-1+\f3p-\f2p\beta_p}E_p^{\f2p}\\
&\times\Bigl(\sum_{k\geq4} (kN)^{2\beta_p}\Bigl(\bigl\|r^{\f{p-3}2}|\wt\nabla\wt u_{k}|
|\wt u_{k}|^{\f p2-1}\bigr\|_{L^2_t(L^2)}^2
+(kN)^{2+2\beta_p}\bigl\|r^{\f{p-5}2}
|\wt u_{k}|^{\f p2}\bigr\|_{L^2_t(L^2)}^2\Bigr)\Bigr)^{\f12}\\
&\times\Bigl(\sum_{k\geq4}(kN)^{2\beta_p}\bigl\|r^{\f{p-3}2}|\wt u_{k}|^{\f p2}\bigr\|_{L^\infty_t(L^2)}^2\Bigr)^{\f1p}
\Bigl(\sum_{k\geq4}(kN)^{2\beta_p}\bigl\|r^{\f{p-3}2}|\wt u_{k}|^{\f p2}\bigr\|_{L^2_t(L^6)}^2\Bigr)^{\f12-\f2p}\\
&\lesssim N^{-1+\f3p-\f2p\beta_p} E_p^{1+\f1p}.
\end{split}\end{equation}

\no{\bf Case 3:} The terms with both $k_1$ and $k_2$ being bigger than $k-1$, and we denote this part as
$$I_k^{(4,1,3)}\eqdefa\sum_{j=k-1}^{\infty}
\int_{\R^3}|\wt u_{j}||\wt u_{k+j}|
\bigl(|\wt\nabla\wt u_k|
+|r^{-1}\wt u_k|\bigr)|\wt u_k|^{p-2}r^{p-3}\,dx.$$
Then we get, by a
similar derivation of \eqref{3.10}, that
\begin{align*}
(kN&)^{2\beta_p}I_k^{(4,1,3)}
\lesssim\sum_{j=k-1}^{\infty}
\Bigl((jN)^{1+\beta_p}\bigl\|r^{\f{p-5}2}|\wt u_j|^{\f p2}\bigr\|_{L^2}\Bigr)^{\f12-\f3{2p}}
\Bigl((jN)^{\beta_p}\bigl\|r^{\f{p-3}2}|\wt u_j|^{\f p2}\bigr\|_{L^6}\Bigr)^{\f7{2p}-\f12}\\
&\times\Bigl(\bigl((k+j)N\bigr)^{1+\beta_p}
\bigl\|r^{\f{p-5}2}|\wt u_{(k+j)}|^{\f p2}\bigr\|_{L^2}\Bigr)^{\f12-\f3{2p}}
\Bigl(\bigl((k+j)N\bigr)^{\beta_p}
\bigl\|r^{\f{p-3}2}|\wt u_{(k+j)}|^{\f p2}\bigr\|_{L^6}\Bigr)^{\f7{2p}-\f12}\\
&\times\Bigl((kN)^{\beta_p}\bigl\|r^{\f{p-3}2}|\wt\nabla\wt u_{k}|
|\wt u_{k}|^{\f p2-1}\bigr\|_{L^2}
+(kN)^{1+\beta_p}\bigl\|r^{\f{p-5}2}
|\wt u_{k}|^{\f p2}\bigr\|_{L^2}\Bigr)\\
&\times\Bigl((kN)^{\beta_p}\bigl\|r^{\f{p-3}2}|\wt u_{k}|^{\f p2}\bigr\|_{L^2}\Bigr)^{\f2p}
\Bigl((kN)^{\beta_p}\bigl\|r^{\f{p-3}2}|\wt u_{k}|^{\f p2}\bigr\|_{L^6}\Bigr)^{1-\f4p}\\
&\times\bigl(j(k+j)N^2\bigr)^{-\f12+\f3{2p}-\f2p\beta_p}
 (kN)^{\f2p\beta_p}.
\end{align*}
Due to  $\f{k+j}j\in]1,3[$ for any $j\geq k-1,$ one has
\begin{align*}
\Bigl(\sum_{j=k-1}^{\infty}\bigl(j(k+j)\bigr)
^{\left(-\f12+\f3{2p}-\f2p\beta_p\right)\f{p}{p-2}}\Bigr)^{\f{p-2}p}
&\lesssim \Bigl(\sum_{j=k-1}^{\infty}
j^{\left(-1+\f3p-\f4p\beta_p\right)\f{p}{p-2}}\Bigr)^{\f{p-2}p}
\\
&\lesssim k^{\f1p-\f4p\beta_p},
\end{align*}
from which, we deduce
\begin{align*}
(kN)^{2\beta_p}\int_0^t&I_k^{(4,1,3)}\,dt'
\lesssim  k^{\f1p-\f2p\beta_p} N^{-1+\f3p-\f2p\beta_p} E_p^{\f2p}\\
&\times\Bigl((kN)^{2\beta_p}\bigl\|r^{\f{p-3}2}|\wt\nabla\wt u_{k}|
|\wt u_{k}|^{\f p2-1}\bigr\|_{L^2_t(L^2)}^2
+(kN)^{2+2\beta_p}\bigl\|r^{\f{p-5}2}
|\wt u_{k}|^{\f p2}\bigr\|_{L^2_t(L^2)}^2\Bigr)^{\f12}\\
&\times\Bigl((kN)^{2\beta_p}\bigl\|r^{\f{p-3}2}|\wt u_{k}|^{\f p2}\bigr\|_{L^\infty_t(L^2)}^2\Bigr)^{\f1p}
\Bigl((kN)^{2\beta_p}\bigl\|r^{\f{p-3}2}|\wt u_{k}|^{\f p2}
\bigr\|_{L^2_t(L^6)}^2\Bigr)^{\f12-\f2p}.
\end{align*}
By summing up the above inequality for $k\geq 3,$ we find
\begin{equation}\begin{split}\label{3.14}
\sum_{k\geq3}(kN)^{2\beta_p}\int_0^t I_k^{(4,1,3)}\,dt'
\lesssim N^{-1+\f3p-\f2p\beta_p} E_p^{1+\f1p}.
\end{split}\end{equation}

By summarizing the estimates \eqref{3.7},
~\eqref{3.8},~\eqref{3.9},~\eqref{3.13} and \eqref{3.14}, we achieve
\begin{equation}\label{3.15}
\int_0^t\Bigl(N^{-2\al_p}I^{(4,1)}_1
+\sum_{k\geq2}(kN)^{2\beta_p} I_k^{(4,1)}\Bigr)\,dt'
\lesssim E_p^{1+\f1p}.
\end{equation}

\no$\bullet$~{\bf The estimate of $I_k^{(4,2)}$.}

Comparing $I_k^{(4,2)}$ with $I_k^{(4,1)}$ given by \eqref{3.6}, the terms $\wt\nabla\wt u_k$
and $\wt u_{k_1}\wt u_{k_2}$ in the integrand of $I_k^{(4,1)}$ is replaced by $r^{-1}\wt u_k$ and $\vt_{k_1}\vt_{k_2}$ in that of $I_k^{(4,2)}$ respectively. While in view of \eqref{defEpr} and \eqref{defEpth},
 there is an additional
$(kN)^{2}$ in the front of $\bigl\|r^{\f{p-5}2}|\wt u_{k}|^{\f p2}\bigr\|_{L^2_t(L^2)}^2$
than $\bigl\|r^{\f{p-3}2}|\nabla\wt u_k|(\ur_k)^{\f p2-1}\bigr\|_{L^2_t(L^2)}^2$
in $E_p^{r,z}$, and there is an additional $(kN)^{\f p2}$ in the front of the terms
in $E_p^\th$ than the terms of the same type in $E_p^{r,z}$.
So that exactly along the same line to the derivation of
\eqref{3.15}, we  deduce that
\begin{equation}\label{3.16}
\int_0^t\Bigl(N^{-2\al_p}I^{(4,2)}_1
+\sum_{k\geq2}(kN)^{2\beta_p} I_k^{(4,2)}\Bigr)\,dt'
\lesssim N^{-2} E_p^{1+\f1p}.
\end{equation}

\no$\bullet$~{\bf The estimate of $I_k^{(4,3)}$.}

Finally, let us handle the remaining term $I_k^{(4,3)}$ in \eqref{3.6},
which we shall decompose into $I_k^{(4,3,1)}+I_k^{(4,3,2)}$ according to the relations between $k_1,~k_2$ and $k$, precisely,
let $\Omega_k$ be given by \eqref{defOmk}, we denote
\beq \label{3.16a}\begin{split}
I_k^{(4,3,1)}\eqdef&
\sum_{\substack{(k_1,k_2)\in\Omega_k,\\
k_2\leq2k}}(k_1+k_2)N\int_{\R^3}
{|\vt_{k_1}||\wt u_{k_2}}||\wt u_k|^{p-1}
 r^{p-4}\,dx,\\
I_k^{(4,3,2)}\eqdef &\sum_{j=2k+1}^\infty{(2j\pm k)N}\int_{\R^3}
|\vt_{j\pm k}||\wt u_j|
|\wt u_k|^{p-1} r^{p-4}\,dx.
 \end{split} \eeq

Let us first focus on $I_k^{(4,3,1)}$. There holds for any $(k_1,k_2)\in\Omega_k$ with ${k_2}\leq 2k$ that
$$k_1+k_2\leq k+2k_2\leq5k.$$
Then it is easy to verify that
$I_k^{(4,3,1)}$ shares a similar estimate as $I_k^{(4,1)}$
in \eqref{3.15}. In fact, one only needs to modify the estimate of $\wt u_{k_1}$
in $I_k^{(4,1)}$ to $\vt_{k_1}$ in $I_k^{(4,3,1)}$, which gives an additional $N^{-\f12}$
 due to the different weights in the definitions of  $E_p^\th$ and $E_p^{r,z}$ given respectively by \eqref{defEpr}
 and \eqref{defEpth}. Hence we have
\begin{equation}
\int_0^t\Bigl(N^{-2\al_p}I^{(4,3,1)}_1
+\sum_{k\geq2}(kN)^{2\beta_p} I_k^{(4,3,1)}\Bigr)\,dt'
\lesssim N^{-\f12} E_p^{1+\f1p}.
\end{equation}

For $I_k^{(4,3,2)},$
 we get, by using H\"older's inequality, that
\begin{align*}
N^{-2\al_p}I_1^{(4,3,2)}
\lesssim&\sum_{j=3}^\infty
\Bigl(\bigl((j\pm 1)N\bigr)^{1+\f p4+\beta_p}
\bigl\|r^{\f{p-5}2}|\vt_{j\pm 1}|^{\f p2}\bigr\|_{L^2}\Bigr)^{\f2p}
\Bigl((jN)^{1+\beta_p}\bigl\|r^{\f{p-5}2}|\wt u_j|^{\f p2}\bigr\|_{L^2}\Bigr)^{\f2p}\\
&\qquad\times\Bigl(N^{1-\al_p}
\bigl\|r^{\f{p-5}2}|\wt u_1|^{\f p2}\bigr\|_{L^2}\Bigr)^{2-\f7p}
\Bigl(N^{-\al_p}
\bigl\|r^{\f{p-3}2}|\wt u_1|^{\f p2}\bigr\|_{L^2}\Bigr)^{\f2p}\\
&\qquad\times\Bigl(N^{-\al_p}\bigl\|r^{\f{p-3}2}|\wt u_1|^{\f p2}\bigr\|_{L^6}\Bigr)^{\f3p}
 j^{\f12-\f4p(1+\beta_p)} N^{-\f32+\f3p-\f2p\al_p-\f4p\beta_p},
\end{align*}
and for $k\geq2$ that
\begin{align*}
(kN)^{2\beta_p}I_k^{(4,3,2)}
\lesssim&\sum_{j=2k+1}^\infty
\Bigl(\bigl((j\pm k)N\bigr)^{1+\f p4+\beta_p}
\bigl\|r^{\f{p-5}2}|\vt_{j\pm k}|^{\f p2}\bigr\|_{L^2}\Bigr)^{\f2p}
\Bigl((jN)^{1+\beta_p}\bigl\|r^{\f{p-5}2}|\wt u_j|^{\f p2}\bigr\|_{L^2}\Bigr)^{\f2p}\\
&\qquad\times\Bigl((kN)^{1+\beta_p}
\bigl\|r^{\f{p-5}2}|\wt u_{k}|^{\f p2}\bigr\|_{L^2}\Bigr)^{2-\f7p}
\Bigl((kN)^{\beta_p}
\bigl\|r^{\f{p-3}2}|\wt u_{k}|^{\f p2}\bigr\|_{L^2}\Bigr)^{\f2p}\\
&\qquad\times\Bigl((kN)^{\beta_p}\bigl\|r^{\f{p-3}2}|\wt u_{k}|^{\f p2}\bigr\|_{L^6}\Bigr)^{\f3p}
 j^{\f12-\f4p(1+\beta_p)} k^{-2+\f7p+\f2p\beta_p}
 N^{-\f32+\f3p-\f2p\beta_p},
\end{align*}
where we used the fact that $2j\pm k\thicksim j\pm k\thicksim j$ for any $j\geq2k+1$. By integrating the above inequalities over $[0,t]$ and summing up the resulting inequalities for $k\geq 1,$
we obtain
\begin{align*}
\int_0^t&\Bigl(N^{-2\al_p}I^{(4,3,2)}_1
+\sum_{k\geq2}(kN)^{2\beta_p} I_k^{(4,3,2)}\Bigr)\,dt'\\
\lesssim& E_p^{1+\f1p} N^{-\f32+\f3p-\f2p\al_p-\f4p\beta_p}
\Bigl(\sum_{j=3}^\infty j^{\bigl(\f12-\f4p(1+\beta_p)\bigr)
\f{p}{p-2}}\Bigr)^{1-\f2p}+E_p^{\f2p} N^{-\f32+\f3p-\f2p\beta_p}\\
&\times\sum_{k\geq2}\Bigl\{\Bigl((kN)^{2+2\beta_p}
\bigl\|r^{\f{p-5}2}|\wt u_{k}|^{\f p2}\bigr\|_{L^2_t(L^2)}^2\Bigr)^{1-\f7{2p}}
\Bigl((kN)^{2\beta_p}
\bigl\|r^{\f{p-3}2}|\wt u_{k}|^{\f p2}\bigr\|_{L^\infty_t(L^2)}^2\Bigr)^{\f1p}\\
&\times\Bigl((kN)^{2\beta_p}\bigl\|r^{\f{p-3}2}|\wt u_{k}|
^{\f p2}\bigr\|_{L^2_t(L^6)}^2\Bigr)^{\f3{2p}}
k^{-2+\f7p+\f2p\beta_p}
\Bigl(\sum_{j=2k+1}^\infty
j^{\bigl(\f12-\f4p(1+\beta_p)\bigr)\f{p}{p-2}}\Bigr)^{1-\f2p}\Bigr\}.
\end{align*}
It follows from  $p<6$ and $\beta_p>1$  that
$$\left(\f12-\f4p(1+\beta_p)\right)\f{p}{p-2}<-1,$$
so that one has
$$\Bigl(\sum_{j=3}^\infty j^{\bigl(\f12-\f4p(1+\beta_p)\bigr)
\f{p}{p-2}}\Bigr)^{1-\f2p}\lesssim1\andf
\Bigl(\sum_{j=2k+1}^\infty
j^{\bigl(\f12-\f4p(1+\beta_p)\bigr)\f{p}{p-2}}\Bigr)^{1-\f2p}
\lesssim k^{\f32-\f4p\bigl(\f32+\beta_p\bigr)}.$$
Therefore, we  obtain
\begin{equation}\begin{split}\label{3.18}
&\int_0^t\Bigl(N^{-2\al_p}I^{(4,3,2)}_1
+\sum_{k\geq2}(kN)^{2\beta_p} I_k^{(4,3,2)}\Bigr)\,dt'\\
&\lesssim N^{-\f32+\f3p-\f2p\al_p-\f4p\beta_p} E_p^{1+\f1p}
+ N^{-\f32+\f3p-\f2p\beta_p}E_p^{1+\f1p}
\Bigl(\sum_{k\geq2}k^{\left(-\f12+\f1p-\f2p\beta_p\right) p}\Bigr)^{\f1p}\\
&\lesssim N^{-\f32+\f3p-\f2p\beta_p} E_p^{1+\f1p},
\end{split}\end{equation}
where in the last step, we  used the fact that $\left(-\f12+\f1p-\f2p\beta_p\right) p<-1$
for $\beta_p>1$ and $p>5$.

By summarizing the estimates \eqref{3.15}-\eqref{3.18}, we obtain  the estimate of the part concerning
 $F^r_k\ur_k$  in $I_k^{(4)}$. The
$F^z_k\uz_k$ related part can be handled  exactly along the same line.
As a result, we  achieve \eqref{3.19}. This completes the proof of Lemma \ref{S2lem1}.
\end{proof}

\begin{proof}[{\bf Proof of Lemma \ref{S2lem2}}]
In view of \eqref{defcLm}, we deduce from \eqref{eqtuk} that
\begin{equation}\begin{split}\label{3.20}
\cL_{kN} P_k
=&\bigl(\pa_r+\f1r\bigr)\bigl(\wt u_0\cdot\wt\nabla\ur_k
+\wt u_k\cdot\wt\nabla\ur_0\bigr)
+\f{kN}r\Bigl(\wt u_0\cdot\wt\nabla\vt_k
+\f{\ur_0\vt_k}r\Bigr)\\
&+\pa_z\bigl(\wt u_0\cdot\wt\nabla\uz_k
+\wt u_k\cdot\wt\nabla\uz_0\bigr)
-\bigl(\pa_r+\f1r\bigr)\D\ur_k-\f{kN}r\D\vt_k-\pa_z\D\uz_k\\
&+\bigl(\pa_t+\f{1+k^2N^2}{r^2}\bigr)\Bigl(\pa_r\ur_{k}+\frac{\ur_k}r
+\pa_z\uz_{k}+kN\f{\vt_{k}}r\Bigr)\\
&-\f{2}{r^3}\ur_k{
+2kN\bigl(\pa_r+\f1r\bigr)\Bigl(\f{\vt_k}{r^2}\Bigr)}
-\f{1}{r^2}\pa_z\uz_k
-\Bigl(\bigl(\pa_r+\f1r\bigr)F^r_k
+\f{kN}r F^\th_k+\pa_z F^z_k\Bigr).
\end{split}\end{equation}
It is worth mentioning that the third line of \eqref{3.20} contains term of order $N^3$, which seems
impossible to be absorbed by the left-hand side of \eqref{3.5}. Fortunately,
this line of terms actually vanishes due to the  fourth equation of \eqref{eqtuk}. So that we can rewrite \eqref{3.20} as
\begin{equation}\label{3.21}
\begin{split}
&P_k=\cL_{kN}^{-1}\cA_k^{(1)}+\cL_{kN}^{-1}\cA_k^{(2)}+\cL_{kN}^{-1}\cA_k^{(3)}
+\cL_{kN}^{-1}\cA_k^{(4)}\with\\
&\cA_k^{(1)}\eqdefa\bigl(\pa_r+\f1r\bigr)
\Bigl(\f{\ur_0(2\ur_k+kN\vt_k)}{r}\Bigr)
+\p_z\f{\ur_0u_k^z+\ur_ku_0^z+kN\vt_ku_0^z}r\\
&\qquad\qquad+\f1r\wt\nabla\cdot\bigl(\wt u_0 \ur_k
+\wt u_k\ur_0+kN\wt u_0\vt_k\bigr)+2kN\f{\ur_0\vt_k}{r^2},\\
&\cA_k^{(2)}\eqdefa\pa_r\wt\nabla\cdot\bigl(\wt u_0\ur_k
+\wt u_k\ur_0\bigr)+\pa_z\wt\nabla\cdot\bigl(\wt u_0\uz_k
+\wt u_k\uz_0\bigr),\\
&\cA_k^{(3)}\eqdefa\bigl[\D,\pa_r+\f1r\bigr]\ur_k
+\bigl[\D,\f{kN}r\bigr]\vt_k-\f{2}{r^3}\ur_k+{2kN\bigl(\pa_r+\f1r\bigr)
\Bigl(\f{\vt_k}{r^2}\Bigr)}-\f{1}{r^2}\pa_z\uz_k,\\
&\cA_k^{(4)}\eqdefa-\Bigl(\bigl(\pa_r+\f1r\bigr)F^r_k
+\f{kN}r F^\th_k+\pa_z F^z_k\Bigr).
\end{split} \end{equation}
Correspondingly, we decompose the following summation as
\begin{equation}\label{3.22}
\begin{split}
\int_0^t\Bigl(N^{-2\al_p}&|I^{(5)}_1|
+\sum_{k\geq2}(kN)^{2\beta_p} |I_k^{(5)}|\Bigr)\,dt'
\leq\sum_{j=1}^4\Psi^{(j)}\with\\
\Psi^{(j)}\eqdef &N^{-2\al_p}
\Bigl|\int_0^t\int_{\R^3} \wt\nabla\cL_{N}^{-1}\cA_1^{(j)}
|\wt u_1|^{p-2}\wt u_1 r^{p-3}\,dx\,dt'\Bigr|\\
&+\sum_{k\geq2}(kN)^{2\beta_p}
\Bigl|\int_0^t\int_{\R^3} \wt\nabla\cL_{kN}^{-1}\cA_k^{(j)}
|\wt u_k|^{p-2}\wt u_k r^{p-3}\,dx\,dt'\Bigr|.
\end{split} \end{equation}

In what follows, we shall estimate term by term in \eqref{3.22}.

\no$\bullet$~{\bf The estimate of $\Psi^{(1)}+\Psi^{(2)}.$}

Let us first consider the bilinear term $\cA_k^{(1)}$.
By applying Proposition \ref{lemLm}, we get
\begin{align*}
\bigl\|r^{2-\f5p}\cL_{kN}^{-1}\cA_k^{(1)}
\bigr\|_{L^p_t(L^p)}
&\lesssim_\e (kN)^{1-\f12+\f13+\e}
\bigl\|r^{1-\f5p}u_0\otimes\upsilon_k
\bigr\|_{L^p_t(L^{\f{3p}{p+3}})}\\
&\lesssim (kN)^{1-\f2p}\|u_0\|_{L^\infty_t(L^3)}
\bigl(kN\bigl\|r^{\f{p-5}2}|\upsilon_k|^{\f p2}\bigr\|_{L^2_t(L^2)}\bigr)^{\f2p},
\end{align*}
where in the last step we take $\e=\f16$ for instance.
Here and in all that follows, we always denote
$\upsilon_k\eqdef(\ur_k,\vt_k,\uz_k)$. Then by using integration by parts, we get
\begin{equation}\begin{split}\label{3.23}
&\Bigl|\int_0^t\int_{\R^3} \wt\nabla\cL_{kN}^{-1}\cA_k^{(1)}
|\wt u_k|^{p-2}\wt u_k r^{p-3}\,dx\,dt'\Bigr|\\
&= 2\pi\Bigl|\int_0^t\int_{\R^+}\int_{\R}
\cL_{kN}^{-1}\cA_k^{(j)}\wt\nabla\cdot
\bigl(r^{p-2}|\wt u_k|^{p-2}\wt u_k\bigr)\,drdzdt'\Bigr|\\
&\lesssim\bigl\|r^{2-\f5p}\cL_{kN}^{-1}
\cA_k^{(j)}\bigr\|_{L^p_t(L^p)}
\bigl\|\bigl(r^{\f{p-3}2}|\wt\nabla\wt u_k|
|\wt u_k|^{\f p2-1},r^{\f{p-5}2}|\wt u_k|^{\f p2}\bigr)\bigr\|_{L^2_t(L^2)}
\bigl\|r^{1-\f5p}\wt u_k\bigr\|_{L^p_t(L^p)}^{\f p2-1}\\
&\lesssim\|u_0\|_{L^\infty_t(L^3)}
\bigl(kN\bigl\|r^{\f{p-5}2}|\upsilon_k|^{\f p2}\bigr\|_{L^2_t(L^2)}\bigr)^{\f2p}
\bigl\|\bigl(r^{\f{p-3}2}|\wt\nabla\wt u_k|
|\wt u_k|^{\f p2-1},r^{\f{p-5}2}|\wt u_k|^{\f p2}\bigr)\bigr\|_{L^2_t(L^2)}\\
&\quad\times\bigl(kN\bigl\|r^{\f{p-5}2}|\wt u_k|^{\f p2}\bigr\|_{L^2_t(L^2)}\bigr)^{1-\f2p}.
\end{split}\end{equation}

The estimate for $\cA_k^{(2)}$ is much more complicated.
By applying Proposition \ref{lemLmweightedRiesz}, we find
\begin{equation}\begin{split}\label{3.24}
\bigl\|r^{1-\f3p}&\cL_{kN}^{-1}\cA_k^{(2)}\bigr\|_{L^p_t(L^{\f{3p}{p+1}})}
\lesssim\bigl\|\cL_{kN}^{-1}\cA_k^{(2)}\bigr\|_{L^p_t(L^{\f{3p}{2p-2}})}
+\bigl\|r^{1-\f3p}u_0\otimes\wt u_k\bigr\|_{L^p_t(L^{\f{3p}{p+1}})}\\
&+\bigl\|\cL_{kN}^{-1}\bigl(\f1r \wt\nabla(r^{1-\f3p}u_0\otimes\wt u_k)\bigr)
\bigr\|_{L^p_t(L^{\f{3p}{p+1}})}
+\bigl\|\cL_{kN}^{-1}\bigl(\f1{r^2}
(r^{1-\f3p}u_0\otimes\wt u_k)\bigr)
\bigr\|_{L^p_t(L^{\f{3p}{p+1}})}.
\end{split}\end{equation}
Yet it follows from  Propositions \ref{lemLm} and  \ref{lemLmRiesz} that
\begin{align*}
\bigl\|\cL_{kN}^{-1}\cA_k^{(2)}\bigr\|_{L^p_t(L^{\f{3p}{2p-2}})}
\lesssim&\bigl\|u_0\otimes\wt u_k\bigr\|_{L^p_t(L^{\f{3p}{2p-2}})}
+\bigl\|\cL_{kN}^{-1}\bigl(\f1r \wt\nabla(u_0\otimes\wt u_k)\bigr)
\bigr\|_{L^p_t(L^{\f{3p}{2p-2}})}\\
\lesssim&\bigl\|u_0\otimes\wt u_k\bigr\|_{L^p_t(L^{\f{3p}{2p-2}})}
+\bigl\|r^{1-\f{13}{2p}+\f3{p^2}}u_0\otimes\wt u_k\bigr\|
_{L^p_t(L^{\f{6p^2}{2p^2+9p-6}})},
\end{align*}
and
\begin{align*}
\bigl\|\cL_{kN}^{-1}\bigl(\f1r \wt\nabla(r^{1-\f3p}u_0\otimes\wt u_k)\bigr)
\bigr\|_{L^p_t(L^{\f{3p}{p+1}})}
&+\bigl\|\cL_{kN}^{-1}\bigl(\f1{r^2}(r^{1-\f3p}u_0\otimes\wt u_k)\bigr)
\bigr\|_{L^p_t(L^{\f{3p}{p+1}})}\\
&\lesssim\bigl\|r^{1-\f{13}{2p}
+\f3{p^2}}u_0\otimes\wt u_k\bigr\|_{L^p_t(L^{\f{6p^2}{2p^2+9p-6}})}.
\end{align*}
By substituting the above estimates into \eqref{3.24}, and using
$$\bigl\|u_0\otimes\wt u_k\bigr\|_{L^p_t(L^{\f{3p}{2p-2}})}
\leq \bigl\|r^{1-\f3p}u_0\otimes\wt u_k\bigr\|_{L^p_t(L^{\f{3p}{p+1}})}^{\f{13p-6-2p^2}{7p-6}}
\bigl\|r^{1-\f{13}{2p}+\f3{p^2}}u_0\otimes\wt u_k\bigr\|_{L^p_t(L^{\f{6p^2}{2p^2+9p-6}})}^{\f{2p(p-3)}{7p-6}},$$
we obtain
\beq\label{3.24a}
\begin{split}
\bigl\|r^{1-\f3p}\cL_{kN}^{-1}\cA_k^{(2)}\bigr\|_{L^p_t(L^{\f{3p}{p+1}})}
\lesssim &\bigl\|r^{1-\f3p}u_0\otimes\wt u_k\bigr\|_{L^p_t(L^{\f{3p}{p+1}})}
+\bigl\|r^{1-\f{13}{2p}+\f3{p^2}}
u_0\otimes\wt u_k\bigr\|_{L^p_t(L^{\f{6p^2}{2p^2+9p-6}})}\\
\lesssim &\|u_0\|_{L^\infty_t(L^3)}\Bigl(
\bigl\|r^{1-\f3p}\wt u_k\bigr\|_{L^p_t(L^{3p})}\\
&\qquad+\bigl\|r^{1-\f3p}\wt u_k\bigr\|_{L^\infty_t(L^p)}^{\f12-\f1p}
\bigl\|r^{1-\f5p}\wt u_k\bigr\|_{L^p_t(L^p)}^{\f12}
\bigl\|r^{-\f32}\wt u_k\bigr\|_{L^2_t(L^2)}^{\f1p}\Bigr).
\end{split} \eeq
Then by using integration by parts similarly as \eqref{3.23}, we get
\begin{align*}
&\Bigl|\int_0^t\int_{\R^3} \wt\nabla\cL_{kN}^{-1}\cA_k^{(2)}
|\wt u_k|^{p-2}\wt u_k r^{p-3}\,dx\,dt'\Bigr|\\
&\lesssim\bigl\|r^{1-\f{3}p}\cL_{kN}^{-1}
\cA_k^{(2)}\bigr\|_{L^p_t(L^{\f{3p}{p+1}})}
\bigl\|\bigl(r^{\f{p-3}2}|\wt\nabla\wt u_k|
|\wt u_k|^{\f p2-1},r^{\f{p-5}2}|\wt u_k|^{\f p2}\bigr)\bigr\|_{L^2_t(L^2)}
\bigl\|r^{1-\f{3}p}\wt u_k\bigr\|_{L^p_t(L^{3p})}^{\f{p}2-1}.
\end{align*}
It follows from \eqref{S2eq3} that
$$\bigl\|r^{1-\f{3}p}\wt u_k\bigr\|_{L^p_t(L^{3p})}=\bigl\|r^{\f{p-3}2}|\wt u_k|^{\f p2}\bigr\|_{L^2_t(L^6)}^{\f2p}
\lesssim \bigl\|r^{\f{p-3}2}|\wt\nabla\wt u_k|
|\wt u_k|^{\f p2-1}\bigr\|_{L^2_t(L^2)}^{\f2p}
+\bigl\|r^{\f{p-5}2}|\wt u_k|^{\f p2}\bigr\|_{L^2_t(L^2)}^{\f2p},$$
from which, and the estimate \eqref{3.24a}, we infer
\beq\label{3.24b}
\begin{split}
\Bigl|\int_0^t\int_{\R^3}& \wt\nabla\cL_{kN}^{-1}\cA_k^{(2)}
|\wt u_k|^{p-2}\wt u_k r^{p-3}\,dx\,dt'\Bigr|\\
\lesssim&\|u_0\|_{L^\infty_t(L^3)}\Bigl(\bigl\|r^{\f{p-3}2}|\wt\nabla\wt u_k|
|\wt u_k|^{\f p2-1}\bigr\|_{L^2_t(L^2)}^{\f2p}
+\bigl\|r^{\f{p-5}2}|\wt u_k|^{\f p2}\bigr\|_{L^2_t(L^2)}^{\f2p}\\
&\qquad\qquad\qquad+\bigl\|r^{\f{p-3}2}|\wt u_k|^{\f p2}\bigr\|_{L^\infty_t(L^2)}^{\f1p-\f2{p^2}}
\bigl\|r^{\f{p-5}2}|\wt u_k|^{\f p2}\bigr\|_{L^2_t(L^2)}^{\f1p}
\bigl\|r^{-\f32}\wt u_k\bigr\|_{L^2_t(L^2)}^{\f1p}\Bigr)\\
&\times\Bigl(\bigl\|r^{\f{p-3}2}|\wt\nabla\wt u_k|
|\wt u_k|^{\f p2-1}\bigr\|_{L^2_t(L^2)}^{2-\f2p}
+\bigl\|r^{\f{p-5}2}|\wt u_k|^{\f p2}\bigr\|_{L^2_t(L^2)}^{2-\f2p}\Bigr).
\end{split}\eeq

By virtue of \eqref{3.23} and \eqref{3.24b},
we can deduce
\begin{align*}
\sum_{j=1}^2&\Psi^{(j)}\eqdef N^{-2\al_p}
\sum_{j=1}^2\Bigl|\int_0^t\int_{\R^3} \wt\nabla\cL_{N}^{-1}\cA_1^{(j)}
|\wt u_1|^{p-2}\wt u_1 r^{p-3}\,dx\,dt'\Bigr|\\
&\qquad\qquad\qquad+\sum_{k=2}^\infty(kN)^{2\beta_p}
\sum_{j=1}^2\Bigl|\int_0^t\int_{\R^3} \wt\nabla\cL_{kN}^{-1}\cA_k^{(j)}
|\wt u_k|^{p-2}\wt u_k r^{p-3}\,dx\,dt'\Bigr|\\
\lesssim&\|u_0\|_{L^\infty_t(L^3)}\Bigl\{
N^{-\f2p\al_p}\bigl\|r^{\f{p-3}2}
|\wt\nabla\wt u_1|
|\wt u_1|^{\f p2-1}\bigr\|_{L^2_t(L^2)}^{\f2p}
+N^{-\f2p(\al_p-1)}\bigl\|r^{\f{p-5}2}|\upsilon_1|^{\f p2}\bigr\|_{L^2_t(L^2)}^{\f2p}\\
&+\bigl(N^{-\al_p}\bigl\|r^{\f{p-3}2}|\wt u_1|^{\f p2}\bigr\|_{L^\infty_t(L^2)}\bigr)^{\f1p-\f2{p^2}}
\bigl(N^{-\al_p+1}\bigl\|r^{\f{p-5}2}|\wt u_1|^{\f p2}\bigr\|_{L^2_t(L^2)}\bigr)^{\f1p}
\bigl\|r^{-\f32}\wt u_1\bigr\|_{L^2_t(L^2)}^{\f1p}\Bigr\}\\
&\times\Bigl(N^{-(2-\f2p)\al_p}\bigl\|r^{\f{p-3}2}|\wt\nabla\wt u_1|
|\wt u_1|^{\f p2-1}\bigr\|_{L^2_t(L^2)}^{2-\f2p}
+N^{-(2-\f2p)(\al_p-1)}\bigl\|r^{\f{p-5}2}|\wt u_1|^{\f p2}\bigr\|_{L^2_t(L^2)}^{2-\f2p}\Bigr)\\
&+\|u_0\|_{L^\infty_t(L^3)}\sum_{k=2}^\infty
\Bigl\{N^{\f2p\beta_p}\bigl\|r^{\f{p-3}2}
|\wt\nabla\wt u_k|
|\wt u_k|^{\f p2-1}\bigr\|_{L^2_t(L^2)}^{\f2p}
+N^{\f2p(\beta_p+1)}\bigl\|r^{\f{p-5}2}|\upsilon_k|^{\f p2}\bigr\|_{L^2_t(L^2)}^{\f2p}\\
&+{N^{\f2{p^2}\beta_p-\f1p}}
\bigl(N^{\beta_p}\bigl\|r^{\f{p-3}2}|\wt u_k|^{\f p2}\bigr\|_{L^\infty_t(L^2)}\bigr)^{\f1p-\f2{p^2}}
\bigl(N^{\beta_p+1}\bigl\|r^{\f{p-5}2}|\wt u_k|^{\f p2}\bigr\|_{L^2_t(L^2)}\bigr)^{\f1p}
\bigl\|r^{-\f32}\wt u_k\bigr\|_{L^2_t(L^2)}^{\f1p}\Bigr\}\\
&\times\Bigl(N^{(2-\f2p)\beta_p}\bigl\|r^{\f{p-3}2}|\wt\nabla\wt u_k|
|\wt u_k|^{\f p2-1}\bigr\|_{L^2_t(L^2)}^{2-\f2p}
+N^{(2-\f2p)(\beta_p+1)}\bigl\|r^{\f{p-5}2}|\wt u_k|^{\f p2}\bigr\|_{L^2_t(L^2)}^{2-\f2p}\Bigr),
\end{align*}
Observing that
$$N^{\f2{p^2}\beta_p-\f1p}<N^{\f2{p^2}\f{p-1}4-\f1p}<1 \andf  \left(\f1p-\f2{p^2}\right)+\f1p+\f1p
+\left(2-\f2p\right)>2,\quad\forall\ p\in]5,6[,$$
we conclude that
\begin{equation}\begin{split}\label{estiPsi12}
\Psi^{(1)}+\Psi^{(2)}&\lesssim \|u_0\|_{L^\infty_t(L^3)}\Bigl(E_p
+E_p^{1-\f{1}{p^2}}\bigl(\sum_{k=1}^\infty
\bigl\|r^{-\f32}\wt u_k\bigr\|_{L^2_t(L^2)}^2\bigr)^{\f1{2p}}\Bigr)\\
&\lesssim \|u_0\|_{L^\infty_t(L^3)}\Bigl(E_p
+\bigl(\sum_{k=1}^\infty
\bigl\|r^{-\f32}\wt u_k\bigr\|_{L^2_t(L^2)}^2\bigr)^{\f{p}2}\Bigr).
\end{split}\end{equation}

\no$\bullet$~{\bf The estimate of $\Psi^{(3)}.$}

We first observe from \eqref{3.21} that
 $\cL_{kN}^{-1}\cA_k^{(3)}$ is composed  of the following terms:
$$\cL_{kN}^{-1}\lozenge\f{\upsilon_k}{r^2}\quad\text{or}\quad
\cL_{kN}^{-1}\lozenge\f{kN\upsilon_k}{r^2},
\quad\text{where}\quad\lozenge\eqdef\bigl(\pa_r,\pa_z,\f1r\bigr)
\andf\upsilon_k\eqdef(\ur_k,\vt_k,\uz_k).$$
So that  for any small enough $\e>0$, we deduce from Proposition
 \ref{lemLm} that
\begin{align*}
\bigl\|r^{\f{p-3}2}\cL_{kN}^{-1}\cA_k^{(3)}\bigr\|_{L_t^{\f{10}{7-p}}(L^{\f{10}{7-p}})}
&\lesssim kN\bigl\|r^{\f{p-3}2}\cL_{kN}^{-1}\lozenge\f1r
\bigl(\f{\upsilon_k}r\bigr)\bigr\|_{L_t^{\f{10}{7-p}}(L^{\f{10}{7-p}})}\\
&\lesssim_\e(kN)^{\f1p+\f{p}{10}+\e-\f15}\bigl\|r^{\f{p-2}5-\f3p}
\upsilon_k\bigr\|_{L_t^{\f{10}{7-p}}(L^p)}\\
&\lesssim_\e(kN)^{\f1p+\f{p}{10}+\e-\f15}
\bigl\|r^{\f{p-3}2}|\upsilon_k|^{\f p2}\bigr\|_{L_t^\infty(L^2)}^{\f2p-\f{7-p}5}
\bigl\|r^{\f{p-5}2}|\upsilon_k|^{\f p2}\bigr\|_{L_t^2(L^2)}^{\f{7-p}5},
\end{align*}
from which, we infer
\begin{equation}\begin{split}\label{ineqA3}
\Bigl|&\int_0^t\int_{\R^3} \wt\nabla\cL_{kN}^{-1}\cA_k^{(3)}
|\wt u_k|^{p-2}\wt u_k r^{p-3}\,dx\,dt'\Bigr|\lesssim\bigl\|r^{\f{p-3}2}\cL_{kN}^{-1}\cA_k^{(3)}
\bigr\|_{L_t^{\f{10}{7-p}}(L^{\f{10}{7-p}})}\\
&\times\Bigl(\bigl\|r^{\f{p-3}2}|\wt u_k|^{\f p2-1}\wt\nabla\wt u_k\bigr\|_{L^2_t(L^2)}+\bigl\|r^{\f{p-5}2}|\wt u_k|^{\f p2}\bigr\|_{L^2_t(L^2)}\Bigr)
\bigl\|r^{\f{p-5}2}|\wt u_k|^{\f p2}\bigr\|_{L^2_t(L^2)}^{\f35}
\bigl\|r^{-\f32}\wt u_k\bigr\|_{L^2_t(L^2)}^{\f p5-1}\\
&\lesssim_\e(kN)^{\f1p+\f{3p}{10}+\e-\f{11}5}\bigl\|r^{\f{p-3}2}|\upsilon_k|^{\f p2}\bigr\|
_{L_t^\infty(L^2)}^{\f2p-\f{7-p}5} \Bigl(\bigl\|r^{\f{p-3}2}|\wt u_k|^{\f p2-1}\wt\nabla\wt u_k\bigr\|_{L^2_t(L^2)}\\
&\qquad+\bigl\|r^{\f{p-5}2}|\wt u_k|^{\f p2}\bigr\|_{L^2_t(L^2)}\Bigr)
\Bigl(kN \bigl\|r^{\f{p-5}2}|\upsilon_k|^{\f p2}\bigr\|_{L^2_t(L^2)}\Bigr)^{2-\f p5}
\bigl\|r^{-\f32}\wt u_k\bigr\|_{L^2_t(L^2)}^{\f p5-1}.
\end{split}\end{equation}
Observing that
$$\left(\f2p-\f{7-p}5\right)+1+\left(2-\f p5\right)
+\left(\f p5-1\right)>2,\quad\forall\ p\in]5,6[,$$
then by virtue of \eqref{3.22} and \eqref{ineqA3}, we get, by a similar derivation of \eqref{estiPsi12}, that
$$\Psi^{(3)}\lesssim_\e\sup_{k\in\N^+}
(kN)^{\f1p+\f{3p}{10}+\e-\f{11}5+2\beta_p(\f15-\f1p)}
\cdot E_p^{\f45+\f1p}\Bigl(\sum_{k=1}^\infty
\bigl\|r^{-\f32}\wt u_k\bigr\|_{L^2_t(L^2)}^2\Bigr)^{\f p{10}-\f12}.$$
While thanks to
\eqref{condialbeta}, one has
$$\f1p+\f{3p}{10}-\f{11}5+2\beta_p\left(\f15-\f1p\right)
<\f1p+\f{3p}{10}-\f{11}5+\f{p-1}{2}\left(\f15-\f1p\right)
<-0.15,$$
so that we may take $\e=0.05$ and obtain
\begin{equation}\begin{split}\label{estiPsi3}
\Psi^{(3)}&\leq C N^{-\f1{10}}\cdot
E_p^{\f45+\f1p}\Bigl(\sum_{k=1}^\infty
\bigl\|r^{-\f32}\wt u_k\bigr\|_{L^2_t(L^2)}^2\Bigr)^{\f p{10}-\f12}\\
&\leq \f18E_p
+CN^{-\f{p}{2(p-5)}}\Bigl(\sum_{k=1}^\infty
\bigl\|r^{-\f32}\wt u_k\bigr\|_{L^2_t(L^2)}^2\Bigr)^{\f p2}.
\end{split}\end{equation}

\no$\bullet$~{\bf The estimate of $\Psi^{(4)}.$}

For simplicity, we shall only present the estimate of the parts in $\Psi^{(4)}$ concerning $\pa_r F^r_k$ and $\f{kN}r F^\th_k.$
The estimate of the remaining parts in $\Psi^{(4)}$ follows along the same line.

By using the fourth equation of \eqref{eqtuk}, we write
\beq\label{S2eq6}
\begin{split}
\pa_r F^r_k=&\cA_k^{(4,1)}+\cA_k^{(4,2)} \andf
\f{kN}r F^\th_k=\cA_k^{(4,3)}+\cA_k^{(4,4)} \with\\
\cA_k^{(4,1)}\eqdefa &-\f12\sum_{(k_1,k_2)\in\Omega_k}
\pa_r\wt\nabla\cdot(\wt u_{k_1}\ur_{k_2})\\
\cA_k^{(4,2)}\eqdefa &-\f12\sum_{(k_1,k_2)\in\Omega_k}
\pa_r\Bigl(\f{\ur_{k_1}\ur_{k_2}}r+k_1N\f{\vt_{k_1}\ur_{k_2}}r\Bigr)\\
&+\f12\Bigl(\sum_{|k_1-k_2|=k}
-\sum_{k_1+k_2=k}\Bigr)\pa_r
\Bigl(\f{\vt_{k_1}\vt_{k_2}}r+k_2N\f{\vt_{k_1}\ur_{k_2}}r\Bigr),\\
\cA_k^{(4,3)}\eqdefa&\f{kN}{2r}\Bigl(\sum_{k_2-k_1=k}
-\sum_{(k_1,k_2)\in\Omega_k}\Bigr)
\Bigl(\wt\nabla\cdot(\wt u_{k_2}\vt_{k_1})
+2\f{\ur_{k_2}\vt_{k_1}}r\Bigr),\\
\cA_k^{(4,4)}=&\f{kN}{r}\Bigl(\sum_{k_2-k_1=k}
-\sum_{(k_1,k_2)\in\Omega_k}\Bigr)
\Bigl(k_2N\f{\vt_{k_1}\vt_{k_2}}r\Bigr),
\end{split} \eeq where the indices $k_1, k_2$ lie in $\N^+$, and $\Omega_k$ is defined by \eqref{defOmk}.
Correspondingly, for $j=1,2,3,4,$ we denote
\beq\label{S2eq6d}
\Psi_k^{(4,j)}\eqdef\Bigl|\int_0^t\int_{\R^3} \wt\nabla\cL_{kN}^{-1}\cA_k^{(4,j)}
|\wt u_k|^{p-2}\wt u_k r^{p-3}\,dx\,dt'\Bigr|.
\eeq

We first get, by using integration by parts and Proposition \ref{lemLmweightedRiesz} that
\begin{align*}
\Psi_k^{(4,1)}&\lesssim\bigl\|r^{1-\f3p}\cL_{kN}^{-1}
\cA_k^{(4,1)}\bigr\|_{L^{\f p2}_t(L^{\f{3p}{p-1}})}
\Phi_k(t)\\
&\lesssim\Bigl(\sum_{(k_1,k_2)\in\Omega_k}
\bigl\|r^{1-\f3p}\wt u_{k_1}\ur_{k_2}\bigr\|_{L^{\f p2}_t(L^{\f{3p}{p-1}})}
+\sum_{(k_1,k_2)\in\Omega_k}
\bigl\|\cL_{kN}^{-1}\bigl(\f{1}r\wt\nabla,\f1{r^2}\bigr)
\bigl(r^{1-\f3p}\wt u_{k_1}\ur_{k_2}\bigr)\bigr\|_{L^{\f p2}_t(L^{\f{3p}{p-1}})}\\
&\quad+\bigl\|\cL_{kN}^{-1}\cA_k^{(4,1)}
\bigr\|_{L^{\f p2}_t(L^{\f{3p}{2p-4}})}\Bigr)
\Phi_k(t)\eqdef \Psi_k^{(4,1,1)}+\Psi_k^{(4,1,2)}+\Psi_k^{(4,1,3)},
\end{align*}
where
\begin{align*}
\Phi_k(t)&=\bigl\|r^{\f{p-3}2(2-\f2p)}
|\wt\nabla\wt u_k||\wt u_k|^{p-2}
\bigr\|_{L^{\f{p}{p-2}}_t(L^{\f{3p}{2p+1}})}
+\bigl\|r^{\f{p-3}2(2-\f2p)-1}|\wt u_k|^{p-1}
\bigr\|_{L^{\f{p}{p-2}}_t(L^{\f{3p}{2p+1}})}\\
&\lesssim\Bigl(\bigl\|r^{\f{p-3}2}|\wt\nabla \wt u_k|
|\wt u_k|^{\f p2-1}\bigr\|_{L^2_t(L^2)}
+\bigl\|r^{\f{p-5}2}
|\wt u_k|^{\f p2}\bigr\|_{L^2_t(L^2)}\Bigr)
\bigl\|r^{\f{p-3}2}|\wt u_{k}|^{\f p2}\bigr\|_{L^\infty_t(L^2)}^{\f2p}
\bigl\|r^{\f{p-3}2}|\wt u_{k}|^{\f p2}\bigr\|_{L^2_t(L^6)}^{1-\f4p}.
\end{align*}
It follows from a similar derivation of  \eqref{3.15} that
\begin{equation}\label{3.29}
N^{-2\al_p}\Psi_1^{(4,1,1)}
+\sum_{k\geq2}(kN)^{2\beta_p} \Psi_k^{(4,1,1)}
\lesssim E_p^{1+\f1p}.
\end{equation}
While we get, by applying  Proposition \ref{lemLmRiesz}
and then Proposition \ref{lemLm}, that
\begin{align*}
&\bigl\|\cL_{kN}^{-1}\cA_k^{(4,1)}\bigr\|_{L^{\f p2}_t(L^{\f{3p}{2p-4}})}
+\sum_{(k_1,k_2)\in\Omega_k}\bigl\|\cL_{kN}^{-1}\bigl(\f{1}r\wt\nabla,\f1{r^2}\bigr)
\bigl(r^{1-\f3p}\wt u_{k_1}\ur_{k_2}\bigr)\bigr\|_{L^{\f p2}_t(L^{\f{3p}{p-1}})}\\
&\lesssim\sum_{(k_1,k_2)\in\Omega_k}
\Bigl(\bigl\|\wt u_{k_1}\ur_{k_2}\bigr\|_{L^{\f p2}_t(L^{\f{3p}{2p-4}})}
+\bigl\|\cL_{kN}^{-1}\f1r\wt\nabla\bigl(\wt u_{k_1}\ur_{k_2}\bigr)\bigr\|
_{L^{\f p2}_t(L^{\f{3p}{2p-4}})}\\
&\qquad\qquad\qquad+\bigl\|\cL_{kN}^{-1}\bigl(\f{1}r\wt\nabla,\f1{r^2}\bigr)
\bigl(r^{1-\f3p}\wt u_{k_1}\ur_{k_2}\bigr)\bigr\|_{L^{\f p2}_t(L^{\f{3p}{p-1}})}\Bigr)\\
&\lesssim\sum_{(k_1,k_2)\in\Omega_k}
\Bigl(\bigl\|\wt u_{k_1}\ur_{k_2}\bigr\|_{L^{\f p2}_t(L^{\f{3p}{2p-4}})}
+(kN)^{-0.1}\bigl\|r^{2-\f{16}p+\f{12}{p^2}}
\wt u_{k_1}\ur_{k_2}\bigr\|_{L^{\f p2}_t(L^{\f{p^2}{4(p-1)}})}\Bigr)\\
&\lesssim\sum_{(k_1,k_2)\in\Omega_k}
\Bigl(\bigl\|r^{1-\f3p}\wt u_{k_1}\ur_{k_2}\bigr\|_{L^{\f p2}_t(L^{\f{3p}{p-1}})}
+\bigl\|r^{2-\f{16}p+\f{12}{p^2}}
\wt u_{k_1}\ur_{k_2}\bigr\|_{L^{\f p2}_t(L^{\f{p^2}{4(p-1)}})}\Bigr).
\end{align*}
Hence we obtain
\begin{equation}\label{3.30}\begin{split}
\Psi_k^{(4,1,2)}+&\Psi_k^{(4,1,3)}\lesssim \Psi_k^{(4,1,1)}+\Psi_k^{(4,1,4)}\with\\
\Psi_k^{(4,1,4)}\eqdefa &\sum_{(k_1,k_2)\in\Omega_k}
\bigl\|r^{2-\f{16}p+\f{12}{p^2}}
\wt u_{k_1}\ur_{k_2}\bigr\|_{L^{\f p2}_t(L^{\f{p^2}{4(p-1)}})}\Phi_k(t).
\end{split}\end{equation}
By applying H\"older's inequality, one has
\begin{align*}
&\sum_{(k_1,k_2)\in\Omega_k}\bigl\|r^{2-\f{16}p+\f{12}{p^2}}
\wt u_{k_1}\ur_{k_2}\bigr\|_{L^{\f p2}_t(L^{\f{p^2}{4(p-1)}})}
\leq2\sum_{\substack{(k_1,k_2)\in\Omega_k\\k_1\leq k_2}}
\bigl\|r^{\f{p-3}p}\wt u_{k_1}\bigr\|_{L^\infty_t(L^p)}^{2-\f4p}
\bigl\|r^{-\f32}\wt u_{k_2}\bigr\|_{L^2_t(L^2)}^{\f4p}\\
&\ \lesssim \Bigl(N^{-2\al_p}\bigl\|r^{\f{p-3}2}|\wt u_1|^{\f p2}\bigr\|_{L^\infty_t(L^2)}^2
+\sum_{k_1\geq2}(k_1N)^{2\beta_p}
\bigl\|r^{\f{p-3}2}|\wt u_{k_1}|^{\f p2}\bigr\|_{L^\infty_t(L^2)}^2\Bigr)^{\f2p-\f4{p^2}}\\
&\qquad\times\Bigl(\sum_{k_2\in\N^+}(k_2N)^2
\bigl\|r^{-\f32}\wt u_{k_2}\bigr\|_{L^2_t(L^2)}^2\Bigr)^{\f2p}
\Bigl\{N^{\al_p(\f4p-\f8{p^2})}(kN)^{-\f4p}\\
&\qquad\qquad\qquad\qquad\qquad\qquad+\Bigl(\sum_{(k_1,k_2)\in\Omega_k,\,2\leq k_1\leq k_2}
(k_1N)^{-\f{4\beta_p}{p-2}}
(k_2N)^{-\f{4p}{(p-2)^2}}\Bigr)^{\f{(p-2)^2}{p^2}}\Bigr\}.
\end{align*}
Noticing that for $(k_1,k_2)\in\Omega_k$ with $k_1\leq k_2$,
there holds $k_2\geq\f k2$, thus
\begin{align*}
\Bigl(\sum_{\substack{(k_1,k_2)\in\Omega_k,\\
2\leq k_1\leq k_2}}
&(k_1N)^{-\f{4\beta_p}{p-2}}
(k_2N)^{-\f{4p}{(p-2)^2}}\Bigr)^{\f{(p-2)^2}{p^2}}\lesssim(kN)^{-\f4p}
\Bigl(\sum_{k_1\in\N^+}
(k_1N)^{-\f{4\beta_p}{p-2}}\Bigr)^{\f{(p-2)^2}{p^2}}
\lesssim(kN)^{-\f4p},
\end{align*}
where in the last step, we used the fact that $\f{4\beta_p}{p-2}>1$. As a result, it comes out
\begin{equation}\label{3.32}
\sum_{(k_1,k_2)\in\Omega_k}\bigl\|r^{2-\f{16}p+\f{12}{p^2}}
\wt u_{k_1}\ur_{k_2}\bigr\|_{L^{\f p2}_t(L^{\f{p^2}{4(p-1)}})}
\lesssim N^{\al_p(\f4p-\f8{p^2})}(kN)^{-\f4p}
E_p^{\f2p-\f4{p^2}}D^{\f2p},
\end{equation} for $D(t)$ being given by \eqref{aprioriEp}.

By substituting \eqref{3.32} into \eqref{3.30}, we achieve
\begin{align*}
\Psi_k^{(4,1,4)}&\lesssim N^{\al_p(\f4p-\f8{p^2})}(kN)^{-\f4p}
E_p^{\f2p-\f4{p^2}}D^{\f2p}\\
\times& \Bigl(\bigl\|r^{\f{p-3}2}|\wt\nabla \wt u_k|
|\wt u_k|^{\f p2-1}\bigr\|_{L^2_t(L^2)}
+\bigl\|r^{\f{p-5}2}
|\wt u_k|^{\f p2}\bigr\|_{L^2_t(L^2)}\Bigr)
\bigl\|r^{\f{p-3}2}|\wt u_{k}|^{\f p2}\bigr\|_{L^\infty_t(L^2)}^{\f2p}
\bigl\|r^{\f{p-3}2}|\wt u_{k}|^{\f p2}\bigr\|_{L^2_t(L^6)}^{1-\f4p},
\end{align*}
from which, we deduce that
\begin{equation}\begin{split}\label{3.33}
N^{-2\al_p}&\Psi_1^{(4,1,4)}
+\sum_{k\geq2}(kN)^{2\beta_p} \Psi_k^{(4,1,4)}\\
\lesssim& N^{\al_p(\f4p-\f8{p^2})}E_p^{1+\f1p-\f4{p^2}}D^{\f2p}
\Bigl\{N^{-\f2p\al_p-\f4p}
+\Bigl(\sum_{k\geq2}(kN)^{\left(\f2p\beta_p-\f4p\right)p}\Bigr)^{\f1p}\Bigr\}\\
\lesssim& N^{-\f4p+(\f4p-\f8{p^2})\al_p+\f2p\beta_p}E_p^{1+\f1p-\f4{p^2}}D^{\f2p},
\end{split}\end{equation}
where we used the fact that $\beta_p<\f{p-1}4<\f54$ so that
$\left(\f2p\beta_p-\f4p\right)p<-1$ in the last step.

By summarizing the estimates \eqref{3.29},~\eqref{3.30}
and \eqref{3.33}, and then using Young's inequality
and the condition \eqref{condialbeta}, we arrive at
\begin{equation}\begin{split}\label{3.34}
N^{-2\al_p}\Psi_1^{(4,1)}
+\sum_{k\geq2}(kN)^{2\beta_p} \Psi_k^{(4,1)}
&\lesssim E_p^{1+\f1p}
+N^{-\f4p+(\f4p-\f8{p^2})\al_p+\f2p\beta_p}E_p^{1+\f1p-\f4{p^2}}D^{\f2p}\\
&\lesssim E_p^{1+\f1p}
+N^{\bigl(-\f4p+(\f4p-\f8{p^2})\al_p+\f2p\beta_p\bigr)\f{p(p+1)}4}D^{\f{p+1}2}\\
&\lesssim E_p^{1+\f1p}
+N^{-2}D^{\f{p+1}2}.
\end{split}\end{equation}

The estimate of $N^{-2\al_p}\Psi_1^{(4,j)}
+\sum_{k\geq2}(kN)^{2\beta_p} \Psi_k^{(4,j)}$ for $j=2,3,4$ involves cumbersome calculations. In the Appendix \ref{sectB}, we shall present the
proof of the following estimates:
\begin{equation}\begin{split}\label{3.39}
N^{-2\al_p}\Psi_1^{(4,j)}
+\sum_{k\geq2}(kN)^{2\beta_p} \Psi_k^{(4,j)}
\lesssim E_p^{1+\f1p},\quad\text{for}\quad
j=2,3,4.
\end{split}\end{equation}

By summarizing the estimates \eqref{3.34} and \eqref{3.39},
and recalling the fact the  parts
in $\Psi^{(4)}$ concerning $\f1r F^r_k$ (resp. $\pa_z F^z_k$) can be handled exactly along the same line to  the part relating to
$\f{kN}r F^\th_k$ (resp. $\pa_r F^r_k$), we conclude that
\begin{equation}\begin{split}\label{3.45}
\Psi^{(4)}&\lesssim\sum_{j=1}^4
N^{-2\al_p}\Psi_1^{(4,j)}
+\sum_{k\geq2}(kN)^{2\beta_p} \Psi_k^{(4,j)}\\
&\lesssim E_p^{1+\f1p}+N^{-2}D^{\f{p+1}2}.
\end{split}\end{equation}

In view of \eqref{3.22},
 by summing up  the estimates
\eqref{estiPsi12},~\eqref{estiPsi3} and \eqref{3.45},
we obtain
 \eqref{3.46}.
This completes the proof of Lemma \ref{S2lem2}.
\end{proof}

\section{The proof of Proposition \ref{propH1}}\label{secH1}

The goal of this section is to present the proof of Proposition \ref{propH1}, which will be based on Proposition \ref{propEp},
 Plancherel's identity and Hausdorff-Young's inequality.

\begin{proof}[Proof of Proposition \ref{propH1}]
We first remark that the assumption $(a^r,b^\th,z^z)\in\cM,$ which is defined by \eqref{defcM}, ensures
that the initial data $u_{\rm in}$ given by \eqref{initialNodd} belongs to  $H^1(\R^3)$.
Then it follows from  the classical well-posedness theory of Navier-Stokes equations
(see \cite{CC88} for instance)   that the system \eqref{NS} with initial data $u_{\rm in}$  given by \eqref{initialNodd}
has a unique solution $u$ on $[0,T^\ast[,$ where $T^\ast$ is the lifespan of the solution. Furthermore,
the solution can be written in term of  \eqref{solexpan}. Let $u_0$ be given by \eqref{S2notion},
for some $\e>0$
to be determined later on, we denote
\begin{equation}\label{4.1}
T^\star\eqdef\sup\bigl\{T<T^\ast:\ \|\nabla u\|_{L^\infty_T(L^2)}^2
+\|\na^2 u\|_{L_T^2(L^2)}^2\leq 2\|\nabla u_{\rm in}\|_{L^2}^2,\  \|u_0\|_{L^\infty_T(L^3)}\leq 2\e\bigr\}.
\end{equation}
Since $u_{0,{\rm in}}=0$, $T^\star$ is well-defined and must be positive.
Then the proof of Proposition \ref{propH1} reduces to
 show that
$T^\star=\infty$ for $N$ being sufficiently large. Without loss of generality,
we may assume that $T^\star<\infty$ in what follows.

In view of  \eqref{solexpan}, one has
\begin{align*}
\wt\nabla^j u=&\Bigl\{\wt\nabla^j\ur_{0}
+\sum_{k=1}^\infty\wt\nabla^j\ur_{k}\cos kN\th\Bigr\}\vv e_r\\
&+\Bigl\{\sum_{k=1}^\infty\wt\nabla^j\vt_{k}
\sin kN\th\Bigr\}\vv e_\th
+\Bigl\{\wt\nabla^j\uz_{0}
+\sum_{k=1}^\infty\wt\nabla^j\uz_{k}\cos kN\th\Bigr\}\vv e_z,
~\text{ for $j=1$ or $2$},
\end{align*}
from which and Plancherel's identity, we infer
\beq\label{S4eq1}
\|\wt\nabla^j u_0\|_{L^2}^2
+\sum_{k=1}^\infty\Bigl(\|\wt\nabla^j\ur_k\|_{L^2}^2
+\|\wt\nabla^j\vt_k\|_{L^2}^j+\|\wt\nabla^j\uz_k\|_{L^2}^2\Bigr)
\leq \|\wt\nabla^j u\|_{L^2}^2.
\eeq

While in view of \eqref{coordinate}, one has
\begin{align*}
\pa_r&=\cos\th\,\pa_1+\sin\th\,\pa_2,\quad\pa_z=\pa_3,\quad
\pa_r\pa_z=\cos\th\,\pa_1\pa_3+\sin\th\,\pa_2\pa_3,
\quad\pa_z^2=\pa_3^2,\\
&\text{and}\quad
\pa_r^2=(\cos\th\,\pa_1+\sin\th\,\pa_2)^2
=\cos^2\th\,\pa_1^2+2\cos\th\sin\th\,\pa_1\pa_2+\sin^2\th\,\pa_2^2.
\end{align*}
Hence we deduce \eqref{S4eq1} that
\begin{align*}
\|\wt\nabla^j u_0\|_{L^2}^2
+\sum_{k=1}^\infty\Bigl(\|\wt\nabla^j\ur_k\|_{L^2}^2
+\|\wt\nabla^j\vt_k\|_{L^2}^j+\|\wt\nabla^j\uz_k\|_{L^2}^2\Bigr)
\leq\|\nabla^j u\|_{L^2}^2.
\end{align*}
Then  we deduce from \eqref{initialNodd} and \eqref{4.1}  that for any $t\leq T^\star$, there holds
\begin{equation}\begin{split}\label{4.4}
\|\wt\nabla u_0\|_{L^\infty_t(L^2)}^2
&+\|\wt\nabla^2 u_0\|_{L^2_t(L^2)}^2\\
&+\sum_{k=1}^\infty\Bigl(\|\wt\nabla\upsilon_k\|_{L^\infty_t(L^2)}^2
+\|\wt\nabla^2\upsilon_k\|_{L^2_t(L^2)}^2\Bigr)\leq C\|\nabla u_{\rm in}\|_{L^2}^2
\leq C_{\rm in} N^2.
\end{split}\end{equation}
Here and in all that follows, we always designate
 $(\ur_k,\vt_k,\uz_k)$ to be $\upsilon_k$,
and $C_{\rm in}$ to be some
positive constant which  depends only on $\|(a^r,b^\th,a^z)\|_{\cM}$.

On the other hand, we observe from \eqref{coordinate} that
$$\pa_1^2=\cos^2\th\,\pa_r^2-\f{\sin2\th}r\pa_r\pa_\th
+\f{\sin2\th}{r^2}\pa_\th+\f{\sin^2\th}r\pa_r
+\f{\sin^2\th}{r^2}\pa_\th^2,$$
so that it follows  from
\eqref{solexpan} that
\begin{align*}
\pa_1^2(u^3\vv e_3)=\Bigl\{&\f12\Bigl(\pa_r^2\uz_0+\f{\pa_r\uz_0}{r}\Bigr)
+\f12\Bigl(\pa_r^2\uz_0-\f{\pa_r\uz_0}{r}\Bigr)\cos2\th\\
&+\f14\sum_{k=1}^\infty\Bigl(\pa_r^2\uz_k-(1{-2}kN)\f{\pa_r\uz_k}{r}
+(k^2N^2-2kN)\f{\uz_k}{r^2}\Bigr)\cos (kN-2)\th\\
&+\f12\sum_{k=1}^\infty\Bigl(\pa_r^2\uz_k{+}\f{\pa_r\uz_k}{r}
-k^2N^2\f{\uz_k}{r^2}\Bigr)\cos kN\th\\
&+\f14\sum_{k=1}^\infty\Bigl(\pa_r^2\uz_k-(1+2kN)\f{\pa_r\uz_k}{r}
+(k^2N^2+2kN)\f{\uz_k}{r^2}\Bigr)\cos (kN+2)\th\Bigr\}\vv e_z.
\end{align*}
Noticing that for $N>4,$ one has
$$2<N-2<N<N+2<\cdots<kN+2<(k+1)N-2<\cdots.$$
Then we get, by using Plancherel's identity, that
\beq\label{S4eq1a}
\begin{split}
\sum_{k=1}^\infty\Bigl(\Bigl\|\pa_r^2\uz_k&-(1{-2}kN)\f{\pa_r\uz_k}{r}
+(k^2N^2-2kN)\f{\uz_k}{r^2}\Bigr\|_{L^2}^2
+\Bigl\|\pa_r^2\uz_k{+}\f{\pa_r\uz_k}{r}
-k^2N^2\f{\uz_k}{r^2}\Bigr\|_{L^2}^2\\
&\qquad\quad+\Bigl\|\pa_r^2\uz_k-(1+2kN)\f{\pa_r\uz_k}{r}
+(k^2N^2+2kN)\f{\uz_k}{r^2}\Bigr\|_{L^2}^2\Bigr)\lesssim\|\nabla^2 u\|_{L^2}^2.
\end{split}\eeq
Observing that
\begin{align*}
k^2N^2&\f{\uz_k}{r^2}-\f{\pa_r\uz_k}{r}
=\f14\Bigl(\pa_r^2\uz_k-(1-2kN)\f{\pa_r\uz_k}{r}
+(k^2N^2-2kN)\f{\uz_k}{r^2}\Bigr)\\
&+\f14\Bigl(\pa_r^2\uz_k-(1+2kN)\f{\pa_r\uz_k}{r}
+(k^2N^2+2kN)\f{\uz_k}{r^2}\Bigr)
-\f12\Bigl(\pa_r^2\uz_k+\f{\pa_r\uz_k}{r}
-k^2N^2\f{\uz_k}{r^2}\Bigr),
\end{align*}
and
\begin{align*}
kN\f{\pa_r\uz_k}{r}-kN\f{\uz_k}{r^2}
=&\f14\Bigl(\pa_r^2\uz_k-(1-2kN)\f{\pa_r\uz_k}{r}
+(k^2N^2-2kN)\f{\uz_k}{r^2}\Bigr)\\
&-\f14\Bigl(\pa_r^2\uz_k-(1+2kN)\f{\pa_r\uz_k}{r}
+(k^2N^2+2kN)\f{\uz_k}{r^2}\Bigr),
\end{align*}
then for $kN$ sufficiently large, we deduce from \eqref{S4eq1a} that
\beq\label{S4eq2}
\sum_{k=1}^\infty\Bigl(k^4N^4\Bigl\|\f{\uz_k}{r^2}\Bigr\|_{L^2}^2
+k^2N^2\Bigl\|\f{\pa_r\uz_k}{r}\Bigr\|_{L^2}^2\Bigr)\lesssim\|\nabla^2 u\|_{L^2}^2.
\eeq

Exactly along the same line, we get, by applying Plancherel's identity to
$\pa_1\pa_3 (u^3\vv e_3)$ and
$$(\pa_1^2,\pa_1\pa_3)(u^1\vv e_1+u^2\vv e_2)
=(\pa_1^2,\pa_1\pa_3)\Bigl(\ur_{0}\,\vv e_r
+\sum_{k=1}^\infty \ur_k\cos kN\th\,\vv e_r
+\sum_{k=1}^\infty\vt_k\sin kN\th\,\vv e_\th\Bigr)$$
that for $t\leq T^\star,$
\begin{align*}
\sum_{k=1}^\infty\Bigl(k^4N^4\Bigl\|\f{(\ur_k,\vt_k)}{r^2}\Bigr\|_{L^2}^2
+k^2N^2\Bigl\|\f{(\wt\nabla\ur_k,\wt\nabla\vt_k)}r\Bigr\|_{L^2}^2
+k^2N^2\Bigl\|\f{\pa_z\uz_k}r\Bigr\|_{L^2}^2\Bigr)
\lesssim\|\nabla^2 u\|_{L^2}^2,
\end{align*}
from which, \eqref{initialNodd}, \eqref{4.1} and \eqref{S4eq2},
 we deduce that for any $t\leq T^\star$,
\begin{equation}\label{4.5}
\sum_{k=1}^\infty\Bigl(k^4N^4\Bigl\|\f{\upsilon_{k}}{r^2}\Bigr\|_{L^2_t(L^2)}^2
+k^2N^2\Bigl\|\f{\wt\nabla\upsilon_k}r\Bigr\|_{L^2_t(L^2)}^2\Bigr)
\lesssim\|\na^2u\|_{L^2_t(L^2)}^2
\leq C_{\rm in} N^2.
\end{equation}

On the other hand, noticing that
\begin{align*}
\bigl(-\sin\th\,\pa_1+\cos\th\,\pa_2\bigr) u
&=\f{\pa_\th}r u=-\Bigl\{\sum_{k=1}^\infty
\Bigl(\f{\vt_{k}}r+kN\f{\ur_{k}}r\Bigr)\sin kN\th\Bigr\}\vv e_r\\
&+\Bigl\{\f{\ur_{0}}r+\sum_{k=1}^\infty \Bigl(\f{\ur_{k}}r+kN\f{\vt_{k}}r\Bigr)
\cos kN\th\Bigr\}\vv e_\th
-\Bigl\{\sum_{k=1}^\infty kN\f{\uz_{k}}r\sin kN\th\Bigr\}\vv e_z,
\end{align*}
from which and Plancherel's identity, we infer
\begin{align*}
\sum_{k=1}^\infty\Bigl(\Bigl\|\f{\vt_{k}}r+kN\f{\ur_{k}}r\Bigr\|_{L^2}^2
+\Bigl\|\f{\ur_{k}}r+kN\f{\vt_{k}}r\Bigr\|_{L^2}^2
+k^2N^2\Bigl\|\f{\uz_{k}}r\Bigr\|_{L^2}^2\Bigr)
\lesssim\bigl\|\f{\pa_\th u}r \bigr\|_{L^2}
\lesssim\|\nabla u\|_{L^2}^2.
\end{align*}
For $N$ large enough,
we deduce that for any $t\leq T^\star,$
\begin{equation}\begin{split}\label{4.6}
\sum_{k=1}^\infty k^2N^2\Bigl(\Bigl\|\f{\ur_{k}}r\Bigr\|_{L^\infty_t(L^2)}^2
+\Bigl\|\f{\vt_{k}}r\Bigr\|_{L^\infty_t(L^2)}^2
+\Bigl\|\f{\uz_{k}}r\Bigr\|_{L^\infty_t(L^2)}^2\Bigr)
\lesssim\|\nabla u\|_{L^\infty_t(L^2)}^2\leq C_{\rm in}N^2.
\end{split}\end{equation}

Similarly, we deduce from the $L^2$ energy inequality of \eqref{NS}:
$$\|u\|_{L^\infty_t(L^2)}^2+2\|\nabla u\|_{L^2(L^2)}^2\leq\|u_{\rm in}\|_{L^2}^2,$$
and Plancherel's identity  that
\begin{equation}\label{4.7}
\sum_{k=1}^\infty\Bigl(\|\upsilon_k\|_{L^\infty_t(L^2)}^2
+\|\wt\nabla\upsilon_k\|_{L^2_t(L^2)}^2
+k^2N^2\Bigl\|\f{\upsilon_{k}}{r}\Bigr\|_{L^2_t(L^2)}^2\Bigr)
\leq C_{\rm in}.
\end{equation}

By interpolating between \eqref{4.5} and \eqref{4.7}, we achieve for $t\leq T^\star$
\begin{equation}\label{4.8}
\begin{split}
&\sum_{k=1}^\infty k^3N^{2}\bigl\|r^{-\f32}\upsilon_{k}\bigr\|_{L^2_t(L^2)}^2
\leq \Bigl(\sum_{k=1}^\infty k^2N^{2} \Bigl\|\f{\upsilon_{k}}{r}\Bigr\|_{L^2_t(L^2)}^2\Bigr)^{\f12}
\Bigl(\sum_{k=1}^\infty k^4N^{2} \Bigl\|\f{\upsilon_{k}}{r^2}\Bigr\|_{L^2_t(L^2)}^2\Bigr)^{\f12}
\leq C_{\rm in},\\
&\sum_{k=1}^\infty k\bigl\|r^{-\f12}\wt\nabla\upsilon_{k}\bigr\|_{L^2_t(L^2)}^2
\leq \Bigl(\sum_{k=1}^\infty \|\wt\nabla{\upsilon_{k}}\|_{L^2_t(L^2)}^2\Bigr)^{\f12}
\Bigl(\sum_{k=1}^\infty k^2 \Bigl\|\f{\wt\nabla\upsilon_{k}}{r}\Bigr\|_{L^2_t(L^2)}^2\Bigr)^{\f12}
\leq C_{\rm in}.
\end{split}
\end{equation}

In view of \eqref{4.1},
by taking $\e$ to be so small
 that
$\f12-C\|u_0\|_{L^\infty_{T^\star}(L^3)}\geq\f13,$ we deduce from  \eqref{aprioriEp} and \eqref{4.8} that for $t\leq T^\star,$ there holds
\begin{equation}\label{4.2}
E_{p,\al_p,\beta_p}(t)
\leq 3E_{p,\al_p,\beta_p}(0)
+CE_{p,\al_p,\beta_p}^{1+\f1p}(t)+C_{\rm in}N^{-2}.
\end{equation}
Whereas by virtue of \eqref{initialNodd}, \eqref{defEpr} and \eqref{defEpth},
we find that
\begin{align*}
E_{p,\al_p,\beta_p}(0)=&N^{-2\al_p}\bigl\|r^{1-\f3p}a^r\bigr\|_{L^p}^p
+N^{-\f p2-2\al_p}\bigl\|r^{1-\f3p}b^\th\bigr\|_{L^p}^p
+N^{-2\al_p}\bigl\|r^{1-\f3p}a^z\bigr\|_{L^p}^p\\
\leq & C_{\rm in}N^{-2\al_p}\quad\mbox{for any}\ p\in ]5,6[.
\end{align*}
Then we deduce from \eqref{4.2} and a standard continuity argument that
\begin{equation}\label{4.3}
E_{p,\al_p,\beta_p}(t)
\leq 4E_{p,\al_p,\beta_p }(0)+C_{\rm in}N^{-2}
\leq C_{\rm in}N^{-2\al_p},\quad\forall\ t\leq T^\star.
\end{equation}

In particular, by taking $p=\f{100}{19},~\al_p=\f{1}{30}$ and
$\beta_p=\f{17}{16}$, which satisfy the condition \eqref{condialbeta}, we get, by interpolating between \eqref{4.8}
and \eqref{4.3} that
\begin{align*}
\|\wt u_1\|_{L^5_t(L^5)}^5
\leq&N^{-2+\f{57}{31}\al_p}\Bigl(N^2\bigl\|r^{-\f32}\wt u_1\bigr\|
_{L^2_t(L^2)}^2\Bigr)^{\f{5}{62}}\Bigl(N^{2-2\al_p}
\bigl\|r^{\f{5}{38}}|\wt u_1|^{\f{50}{19}}\bigr\|
_{L^2_t(L^2)}^2\Bigr)^{\f{57}{62}}\\
\leq&C_{\rm in}N^{-2+\f{57}{31}\al_p} E_{\f{100}{19},\f{1}{30},\f{17}{16}}^{\f{57}{62}}(0)
\leq C_{\rm in}N^{-2},
\end{align*}
and
\begin{align*}
\bigl\||\wt\nabla\wt u_1||\wt u_1|^{\f12}\bigr\|_{L^2_t(L^2)}^2
\leq&N^{\f{19}{31}\al_p}\Bigl(\bigl\|r^{-\f12}\wt\nabla\wt u_1\bigr\|
_{L^2_t(L^2)}^2\Bigr)^{\f{43}{62}}\Bigl(N^{-2\al_p}
\bigl\|r^{\f{43}{38}}|\wt\nabla\wt u_1||\wt{u}_1|^{\f{31}{19}}\bigr\|
_{L^2_t(L^2)}^2\Bigr)^{\f{19}{62}}\\
\leq&C_{\rm in}N^{\f{19}{31}\al_p} E_{\f{100}{19},\f{1}{30},\f{17}{16}}^{\f{19}{62}}(0)
\leq C_{\rm in},
\end{align*}
and
\begin{align*}
\sum_{k=2}^\infty k^4\|\wt u_{k}\|_{L^5_t(L^5)}^5
\leq&N^{-3.95}\sum_{k=2}^\infty\Bigl(k^3N^2\bigl\|r^{-\f32}\wt u_{k}\bigr\|
_{L^2_t(L^2)}^2\Bigr)^{\f{5}{62}}
\Bigl((kN)^{2+\f{17}8}\bigl\|r^{\f{5}{38}}|\wt u_{k}|^{\f{50}{19}}\bigr\|
_{L^2_t(L^2)}^2\Bigr)
^{\f{57}{62}}\\
\leq&C_{\rm in}N^{-3.95} E_{\f{100}{19},\f{1}{30},\f{17}{16}}^{\f{57}{62}}(0)
\leq C_{\rm in}N^{-4},\end{align*}
and
\begin{align*}
\sum_{k=2}^\infty k^{\f43}\bigl\||\wt\nabla\wt u_{k}||\wt u_{k}|^{\f12}\bigr\|_{L^2_t(L^2)}^2
\leq& N^{-0.65}\sum_{k=2}^\infty\Bigl(k\bigl\|r^{-\f12}\wt\nabla\wt u_{k}\bigr\|
_{L^2_t(L^2)}^2\Bigr)^{\f{43}{62}}\\
&\qquad\qquad\times\Bigl((kN)^{\f{17}8}\bigl\|r^{\f{43}{38}}
|\wt\nabla\wt u_k||\wt u_k|^{\f{31}{19}}\bigr\|_{L^2_t(L^2)}^2\Bigr)^{\f{19}{62}}\\
\leq&C_{\rm in}N^{-0.65} E_{\f{100}{19},\f{1}{30},\f{17}{16}}^{\f{19}{62}}(0)
\leq C_{\rm in}N^{-\f23}.
\end{align*}
Similar estimates hold for $\vt_{k}$.

As a consequence, we obtain
\begin{equation}\begin{split}\label{4.9}
&\|\wt u_1\|_{L^5_t(L^5)}^5
\leq C_{\rm in}N^{-2},\quad\|\vt_1\|_{L^5_t(L^5)}^5
\leq C_{\rm in}N^{-2-\f{75}{31}},\\
\sum_{k=2}^\infty (k&N)^4\|\wt u_{k}\|_{L^5_t(L^5)}^5
\leq C_{\rm in},\quad\sum_{k=2}^\infty (kN)^{4+\f{75}{31}}
\|\vt_{k}\|_{L^5_t(L^5)}^5\leq C_{\rm in},
\end{split}\end{equation}
and
\begin{equation}\label{4.10}
\bigl\||\wt\nabla\wt u_1||\wt u_1|^{\f12}\bigr\|_{L^2_t(L^2)}^2
\leq C_{\rm in},\quad\sum_{k=2}^\infty k^{\f43}
\bigl\||\wt\nabla\wt u_{k}||\wt u_{k}|^{\f12}\bigr\|_{L^2_t(L^2)}^2
\leq C_{\rm in}N^{-\f23}.
\end{equation}

The rest of the proof of  Proposition \ref{propH1} relies on the following two lemmas, which we admit for the time being.

\begin{lem}\label{propuL2}
{\sl  There exists large enough integer $N_0$ so that for $N\geq N_0$ and for any $t\leq T^\star$,  there holds
\beq \label{S4eq4} \|u_0\|_{L^\infty_t(L^3)}^3
+\bigl\||\nabla u_0||u_0|^{\f12}\bigr\|_{L^2_t(L^2)}^2
+\bigl\|\nabla|u_0|^{\f32}\bigr\|_{L^2_t(L^2)}^2
\leq 4C_{\rm in}N^{-\f35}.\eeq
}\end{lem}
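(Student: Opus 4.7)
The strategy is to run an $L^3$-type energy estimate on the Euclidean formulation \eqref{eqtuoEuclidean} of the $u_0$-equation, rather than the cylindrical system \eqref{eqtu0}. Working in Euclidean coordinates is essential here, for the reasons already stressed in the paper: by taking the divergence of \eqref{eqtuoEuclidean} one recovers the standard Poisson equation $-\D P_0=\p_i\p_j(u_0^iu_0^j)+\dive S$, where $S$ is the right-hand side of \eqref{eqtuoEuclidean}. Calder\'on-Zygmund theory then yields $\|P_0\|_{L^3}\lesssim\|u_0\|_{L^6}^2+\|\cR S\|_{L^3}$ for a suitable singular-integral operator $\cR$, thus circumventing the obstruction to estimating $\cL_0^{-1}$ recorded in Remark \ref{rmkLm}.

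Take the $L^2(\R^3)$ inner product of \eqref{eqtuoEuclidean} with $|u_0|u_0$. The convective term $u_0\cdot\na u_0$ vanishes against this test function by $\dive u_0=0$; the viscous term, after integration by parts and use of $\p_ju_0\cdot u_0=|u_0|\p_j|u_0|$, contributes the coercive quantity $\int_{\R^3}(|\na u_0|^2+|\na|u_0||^2)|u_0|\,dx$, which dominates both $\||\na u_0||u_0|^{\f12}\|_{L^2}^2$ and $\f49\|\na|u_0|^{\f32}\|_{L^2}^2$ appearing in \eqref{S4eq4}. After time-integration and using $u_0|_{t=0}=0$, this gives
\begin{equation*}
\f13\|u_0\|_{L^\infty_t(L^3)}^3
+\bigl\||\na u_0||u_0|^{\f12}\bigr\|_{L^2_t(L^2)}^2
+\f49\bigl\|\na|u_0|^{\f32}\bigr\|_{L^2_t(L^2)}^2
\leq J_P+J_S,
\end{equation*}
with $J_P=|\int_0^t\!\int P_0u_0\cdot\na|u_0|\,dx\,dt'|$ and $J_S=|\int_0^t\!\int S\cdot u_0|u_0|\,dx\,dt'|$. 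The pressure piece is bounded by $\int_0^t\|P_0\|_{L^3}\|u_0\|_{L^3}^{\f12}\||u_0|^{\f12}\na|u_0|\|_{L^2}\,dt'$; combining the interpolation $\|u_0\|_{L^6}^2\lesssim\|u_0\|_{L^3}^{\f12}\|\na|u_0|^{\f32}\|_{L^2}$ with the bootstrap hypothesis $\|u_0\|_{L^\infty_t(L^3)}\leq 2\e$ from \eqref{4.1} allows us, for $\e$ small enough, to absorb the leading $\|u_0\|_{L^6}^2$-contribution of $\|P_0\|_{L^3}$ into the dissipation on the left-hand side. The residual $\|\cR S\|_{L^3}$-contribution is estimated together with $J_S$.

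The remaining work is the bound on $J_S$. The source $S$ is a sum over $k\in\N^+$ of bilinear expressions in $\wt u_k$, $\vt_k$ and their $\wt\na$-derivatives, of three distinct types: the convective terms $\wt u_k\cdot\wt\na\upsilon_k$, the zeroth-order terms $|\vt_k|^2/r$ and $\wt u_k\uz_k$, and the singular terms $kN\vt_k\upsilon_k/r$. Each type is estimated by H\"older in space-time against $u_0|u_0|$ placed in $L^\infty_t(L^{3/2})\cap L^q_t(L^r)$ with $(q,r)$ tuned so as to match the coercive left-hand side via Young's inequality. The needed mode-wise bounds come from \eqref{4.4}--\eqref{4.10}: \eqref{4.9} supplies $L^5_t(L^5)$ control with $N^{-2/5}$ for $k=1$ and square-summable $(kN)^{-4/5}$ for $k\geq 2$; \eqref{4.10} provides $\bigl\||\wt\na\wt u_k||\wt u_k|^{\f12}\bigr\|_{L^2_t(L^2)}$; and the singular factor $kN/r$ is absorbed using \eqref{4.7} together with the interpolation \eqref{4.8}, which gives the scale-invariant bound $\sum_k k^3N^2\|r^{-\f32}\upsilon_k\|_{L^2_t(L^2)}^2\leq C_{\rm in}$. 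Summation in $k$ then closes thanks to the positive $k$- and $N$-weights built into these inequalities, and a careful bookkeeping shows that the cumulative $N$-loss is exactly $N^{-\f35}$, yielding $J_S\leq C_{\rm in}N^{-\f35}+(\text{absorbable terms})$. Together with $u_0|_{t=0}=0$ and a standard continuity argument on the resulting differential inequality, this produces \eqref{S4eq4}. The main obstacle is the tracking of $kN$-powers in the singular source $kN\vt_k\upsilon_k/r$: the explicit factor $kN$ must be swallowed by \eqref{4.7}--\eqref{4.8} so that the final $k$-sum not only converges but also produces precisely the $N^{-\f35}$ stated in the lemma.
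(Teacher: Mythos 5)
Your plan is essentially the same as the paper's proof: $L^3$-type energy estimate on the Euclidean reformulation \eqref{eqtuoEuclidean} tested against $|u_0|u_0$, decomposition of the pressure into the $u_0$-bilinear Riesz piece (absorbed via the bootstrap smallness of $\|u_0\|_{L^\infty_t(L^3)}$) and a source-induced piece (estimated by the mode-wise bounds \eqref{4.7}--\eqref{4.10} exactly as for the direct source term). The only cosmetic difference is that you integrate by parts to work with $\|P_0\|_{L^3}$ (via Sobolev embedding applied to $(-\D)^{-1}\dive$), whereas the paper keeps $\nabla P_0$ and bounds $\|\nabla P_0^{(1)}\|_{L^2_t(L^{3/2})}$ and $\|\nabla P_0^{(2)}\|_{L^{5/3}_t(L^{5/3})}$ directly using the $L^p$-boundedness of the double Riesz transform; both routes land on the same absorbed and residual quantities.
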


\begin{lem}\label{propuH1}
{\sl There exists large enough integer $N_0$ so that for $N\geq N_0$ and for any $t\leq T^\star$, there holds
\beq \label{S4eq4w}  \|\nabla u\|_{L^\infty_t(L^2)}^2
+\|\D u\|_{L_t^2(L^2)}^2\leq \f32\|\nabla u_{\rm in}\|_{L^2}^2 \andf \|u\|_{L^5_t(L^5)}\leq C_{\rm in}N^{-\f15}.\eeq
}\end{lem}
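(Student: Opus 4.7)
\noindent\emph{Sketch of the proof.} The plan is to first establish the $L^5_t(L^5)$-smallness of $u$ by combining the Fourier-coefficient bounds \eqref{4.9}--\eqref{4.10} with the zero-mode estimates from Lemma \ref{propuL2}, and then to feed this into the classical Prodi--Serrin-type $H^1$-energy estimate for \eqref{NS}.

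First I would start from the expansion \eqref{solexpan} and apply Hausdorff--Young's inequality in the $\th$-variable to get, pointwise in $(t,r,z)$,
\[
\|u(t,r,\cdot,z)\|_{L^5_\th}^5
\lesssim |u_0(t,r,z)|^5+\Bigl(\sum_{k\geq 1}|\upsilon_k(t,r,z)|^{5/4}\Bigr)^4,
\]
with $\upsilon_k\eqdef(\ur_k,\vt_k,\uz_k)$. Integrating against $r\,dr\,dz$ and then in $t$, and applying Minkowski in $L^4$ at each step, this yields
\[
\|u\|_{L^5_t(L^5)}^5
\lesssim \|u_0\|_{L^5_t(L^5)}^5
+\Bigl(\sum_{k\geq 1}\|\upsilon_k\|_{L^5_t(L^5)}^{5/4}\Bigr)^4.
\]
For the zero mode, the interpolation $\|u_0\|_{L^5}\leq\|u_0\|_{L^3}^{2/5}\|u_0\|_{L^9}^{3/5}$ combined with the Sobolev embedding $\|u_0\|_{L^9}^3=\||u_0|^{3/2}\|_{L^6}^2\lesssim\|\nabla|u_0|^{3/2}\|_{L^2}^2$ and Lemma \ref{propuL2} gives $\|u_0\|_{L^5_t(L^5)}^5\leq C_{\rm in}N^{-1}$. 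The $k=1$ mode is handled directly by \eqref{4.9}, and for $k\geq 2$ I would use H\"older's inequality in the form
\[
\sum_{k\geq 2}\|\upsilon_k\|_{L^5_t(L^5)}^{5/4}
\leq\Bigl(\sum_{k\geq 2}(kN)^4\|\upsilon_k\|_{L^5_t(L^5)}^5\Bigr)^{1/4}
\Bigl(\sum_{k\geq 2}(kN)^{-4/3}\Bigr)^{3/4}
\lesssim C_{\rm in}N^{-1},
\]
where the first factor is bounded thanks to \eqref{4.9}. Altogether this yields $\|u\|_{L^5_t(L^5)}^5\leq C_{\rm in}N^{-1}$, which is the second estimate in \eqref{S4eq4w}.

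Next I would run the standard $H^1$-energy estimate for \eqref{NS}: testing with $-\Delta u$ and controlling the trilinear term via H\"older together with the Gagliardo--Nirenberg inequality $\|\nabla u\|_{L^{10/3}}\lesssim\|\nabla u\|_{L^2}^{2/5}\|\Delta u\|_{L^2}^{3/5}$ and Young, one obtains
\[
\f{d}{dt}\|\nabla u\|_{L^2}^2+\|\Delta u\|_{L^2}^2
\leq C\|u\|_{L^5}^5\|\nabla u\|_{L^2}^2,
\]
and Gr\"onwall's lemma combined with the preceding step then gives, for $N$ large enough,
\[
\|\nabla u\|_{L^\infty_t(L^2)}^2+\|\Delta u\|_{L^2_t(L^2)}^2
\leq\|\nabla u_{\rm in}\|_{L^2}^2\exp(CC_{\rm in}N^{-1})
\leq\f32\|\nabla u_{\rm in}\|_{L^2}^2,
\]
which is the first estimate in \eqref{S4eq4w}.

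The main obstacle is the mode summation in the first step: estimating each $\|\wt u_k\|_{L^5_t(L^5)}^{5/4}$ separately only yields the divergent series $\sum_k(kN)^{-1}$, so one has to exploit the joint summability of $(kN)^4\|\wt u_k\|_{L^5_t(L^5)}^5$ against the convergent series $\sum k^{-4/3}$ via H\"older. This is precisely where the large-frequency assumption pays off: the gain in $k$ provided by Proposition \ref{propEp} and \eqref{4.9} overcomes the polynomial loss coming from Hausdorff--Young, and feeds the $N^{-1/5}$ smallness on the zero mode delivered by Lemma \ref{propuL2}. Once the $L^5_t(L^5)$-smallness of $u$ is in hand, the $H^1$ bootstrap is routine.
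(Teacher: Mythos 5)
Your proposal is correct and follows essentially the same route as the paper: Hausdorff--Young plus Minkowski in $\th$ with a weighted H\"older sum in $k$ (powered by \eqref{4.9}) for the nonzero modes, the interpolation $\|u_0\|_{L^5}\leq\|u_0\|_{L^3}^{2/5}\|u_0\|_{L^9}^{3/5}$ together with Lemma \ref{propuL2} for the zero mode, and then the standard $H^1$ energy estimate with Gr\"onwall. The only differences are cosmetic (you split off $u_0$ inside the $L^5_\th$ bound and weight the H\"older sum by $(kN)^4$ rather than $k^{4/5}$), and both yield the same bounds $\|u\|_{L^5_t(L^5)}\leq C_{\rm in}N^{-1/5}$ and the $\f32\|\nabla u_{\rm in}\|_{L^2}^2$ control.
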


Now we are in a position to complete
the proof of Proposition \ref{propH1}.
For  $N$ so large  that $4C_{\rm in}N^{-\f35}\leq \e^3$, we deduce from
Lemma  \ref{propuL2} that
$\|u_0\|_{L^\infty_{T^\star}(L^3)}
\leq\e,$
which together with \eqref{S4eq4w}
contradicts with the definition of $T^\star$ given by \eqref{4.1}  unless $T^\star=T^\ast=\infty$.
Moreover, \eqref{S4eq4w} ensures \eqref{estipropH1}.
This
completes the proof of Proposition \ref{propH1}.
\end{proof}

Let us now present the proof of Lemmas \ref{propuL2} and  \ref{propuH1}.

\begin{proof}[{\bf Proof of Lemma \ref{propuL2}}]
As we mentioned in Section \ref{secidea},
$P_0$ seems to be more difficult to be handled than $P_k$ for $k\in\N^+$,
hence we appeal to the system \eqref{eqtuoEuclidean}  in the Euclidean coordinates.

By taking $L^2$ inner product of \eqref{eqtuoEuclidean}
with $|u_0|u_0$, and using the divergence-free condition of $u_0$
and the fact that the initial data of  $u_{0}$ vanishes, we obtain
\beq\label{S4eq3}
\begin{split}
\f13\|u_0\|_{L^\infty_t(L^3)}^3
+&\bigl\||\nabla u_0||u_0|^{\f12}\bigr\|_{L^2_t(L^2)}^2
+\f49\bigl\|\nabla|u_0|^{\f32}\bigr\|_{L^2_t(L^2)}^2
\leq \int_0^t\int_{\R^3} |\nabla P_0|\cdot |u_0|^2\,dxdt'\\
&+\sum_{k=1}^\infty\int_0^t\int_{\R^3}\Bigl(|\wt u_k\cdot\wt\nabla\wt u_{k}|
+\f{|\vt_{k}|^2}r+kN\f{|\vt_k\wt u_k|}r\Bigr) |u_0|^2\,dxdt'
\eqdef \cB_1+\cB_2.
\end{split}\eeq
It follows from  \eqref{4.8}-\eqref{4.10} that for any $t\leq T^\star$
\begin{equation}\begin{split}\label{4.11}
\cB_2&\leq\sum_{k=1}^\infty
\Bigl(\bigl\||\nabla\wt u_k||\wt u_k|^{\f12}\bigr\|_{L^2_t(L^2)}
\|\wt u_k\|_{L^5_t(L^5)}^{\f12}
+\bigl\|r^{-\f32}\vt_k\bigr\|_{L^2_t(L^2)}^{\f23}
\|\vt_k\|_{L^5_t(L^5)}^{\f43}\\
&\qquad+kN\bigl\|r^{-\f32}\vt_k\bigr\|_{L^2_t(L^2)}^{\f23}
\|\wt u_k\|_{L^5_t(L^5)}^{\f13}\|\vt_k\|_{L^5_t(L^5)}\Bigr)
\|u_0\|_{L^\infty_t(L^3)}^{\f45}\|u_0\|_{L^3_t(L^9)}^{\f65}\\
&\leq C_{\rm in}N^{-\f15}\|u_0\|_{L^\infty_t(L^3)}^{\f45}
\bigl\|\nabla|u_0|^{\f32}\bigr\|_{L^2_t(L^2)}^{\f45}\\
&\leq \f14\Bigl(\f13\|u_0\|_{L^\infty_t(L^3)}^3
+\f49\bigl\|\nabla|u_0|^{\f32}\bigr\|_{L^2_t(L^2)}^2\Bigr)
+C_{\rm in}N^{-\f35}.
\end{split}\end{equation}

While by taking space divergence to the system \eqref{eqtuoEuclidean}, we decompose
$\nabla P_0$ as $\nabla P_0^{(1)}+\nabla P_0^{(2)}$, where
$$\nabla P_0^{(1)}\eqdefa\nabla(-\D)^{-1}\dive(u_0\cdot\nabla u_0),$$
and
\begin{align*}
&\nabla P_0^{(2)}\eqdefa-\f12\nabla(-\D)^{-1}\Bigl\{\pa_1\Bigl(\cos\th\sum_{k=1}^\infty
\bigl(\wt u_k\cdot\wt\nabla\ur_{k}-\f{|\vt_{k}|^2}r
-kN\f{\vt_k\ur_k}r\bigr)\Bigr)\\
&\qquad+\pa_2\Bigl(\sin\th\sum_{k=1}^\infty
\bigl(\wt u_k\cdot\wt\nabla\ur_{k}-\f{|\vt_{k}|^2}r
-kN\f{\vt_k\ur_k}r\bigr)\Bigr)
+\pa_3\sum_{k=1}^\infty\Bigl(\wt u_k\cdot\wt\nabla\uz_{k}
-kN\f{\vt_k\uz_k}r\Bigr)\Bigr\}.
\end{align*}
Observing that
\begin{align*}
\int_0^t\int_{\R^3} |\nabla P_0^{(2)}| |u_0|^2\,dx\,dt'
\leq\|\nabla P_0^{(2)}\|_{L^{\f53}_t(L^{\f53})}
\|u_0\|_{L^\infty_t(L^3)}^{\f45}\|u_0\|_{L^3_t(L^9)}^{\f65},
\end{align*}
from which and the $L^p$ boundedness of the operator $\nabla(-\D)^{-1}\nabla$, we deduce that
the term $\int_0^t\int_{\R^3} |\nabla P_0^{(2)}| |u_0|^2\,dx\,dt'$ shares
 the same estimate as $\cB_2$ in \eqref{4.11}.

Similarly, we deduce that
\begin{align*}
\int_0^t\int_{\R^3} |\nabla P_0^{(1)}| |u_0|^2\,dx\,dt'
&\leq\|\nabla P_0^{(1)}\|_{L^{2}_t(L^{\f32})}
\|u_0\|_{L^\infty_t(L^3)}^{\f12}\|u_0\|_{L^3_t(L^9)}^{\f32}\\
&\lesssim\|u_0\cdot\nabla u_0\|_{L^{2}_t(L^{\f32})}
\|u_0\|_{L^\infty_t(L^3)}^{\f12}\bigl\|\nabla|u_0|^{\f32}\bigr\|_{L^2_t(L^2)}\\
&\lesssim\bigl\||\nabla u_0||u_0|^{\f12}\bigr\|_{L^2_t(L^2)}
\|u_0\|_{L^\infty_t(L^3)}\bigl\|\nabla|u_0|^{\f32}\bigr\|_{L^2_t(L^2)}.
\end{align*}

By substituting the above estimates into \eqref{S4eq3}, we find
\begin{align*}
\bigl(\f12-C\|u_0\|_{L^\infty_t(L^3)}\bigr)
\Bigl(\f13\|u_0\|_{L^\infty_t(L^3)}^3
+\bigl\||\nabla u_0||u_0|^{\f12}\bigr\|_{L^2_t(L^2)}^2
+\f49\bigl\|\nabla|u_0|^{\f32}\bigr\|_{L^2_t(L^2)}^2\Bigr)
\leq C_{\rm in}N^{-\f35}.
\end{align*}
Then by virtue of \eqref{4.1} as long as $\e$ is so small that $C\e\leq \f14,$
we achieve \eqref{S4eq4}.
\end{proof}

\begin{proof}[{\bf Proof of Lemma \ref{propuH1}}]
We first get, by using Hausdorff-Young's inequality, that
$$\|u-u_0\|_{L^5([0,2\pi],d\th)}
\leq C\bigl\|(\upsilon_{k})_{k\in\N^+}\bigr\|_{\ell^{\f54}(\N^+)}.$$
By taking $L^5_t(L^5(rdrdz))$ norm of the above inequality
and then using Minkowski's inequality and \eqref{4.9}, we find
\begin{equation}\begin{split}\label{4.12}
\|u-u_0\|_{L^5_t(L^5)}
&\leq C\bigl\|\bigl(\|\upsilon_{k}\|_{L^5_t(L^5)}\bigr)
_{k\in\N^+}\bigr\|_{\ell^{\f54}(\N^+)}\\
&\leq C\bigl\|\bigl(k^{\f45}\|\upsilon_k\|_{L^5_t(L^5)}\bigr)
_{k\in\N^+}\bigr\|_{\ell^5(\N^+)}
\bigl\|\bigl(k^{-\f45}\bigr)_{k\in\N^+}\bigr\|_{\ell^{\f53}(\N^+)}\\
&\leq C_{\rm in}N^{-\f25}.
\end{split}\end{equation}
While it follows from Lemma \ref{propuL2}  that
$$\|u_0\|_{L^5_t(L^5)}\leq\|u_0\|_{L^\infty_t(L^3)}^{\f25}
\|u_0\|_{L^3_t(L^9)}^{\f35}
\leq C\|u_0\|_{L^\infty_t(L^3)}^{\f25}
\bigl\|\nabla|u_0|^{\f32}\bigr\|_{L^2_t(L^2)}^{\f25}
\leq C_{\rm in}N^{-\f15},$$
which together  with \eqref{4.12} ensures that
\begin{equation}\label{4.13}
\|u\|_{L^5_t(L^5)}\leq C_{\rm in}N^{-\f15},\quad\forall\ 0<t\leq T^\star.
\end{equation}

On the other hand, we get,
by taking $L^2$ inner product of
\eqref{NS} with $-\D u$, that
\begin{align*}
\f12\f{d}{dt}&\|\nabla u\|_{L^2}^2+\|\D u\|_{L^2}^2
=-\int_{\R^3}(u\cdot\nabla u)\cdot\D u\,dx\\
&\leq\|u\|_{L^5}\|\nabla u\|_{L^{\f{10}3}}\|\D u\|_{L^2}
\leq C\|u\|_{L^5}\|\nabla u\|_{L^2}^{\f25}\|\D u\|_{L^2}^{\f85}
\leq\f12\|\D u\|_{L^2}^2
+C\|u\|_{L^5}^5\|\nabla u\|_{L^2}^2,
\end{align*}
which implies
\begin{align*}
\f{d}{dt}\|\nabla u\|_{L^2}^2+\|\D u\|_{L^2}^2\leq 2C\|u\|_{L^5}^5\|\nabla u\|_{L^2}^2.
\end{align*}
By applying Gronwall's inequality and using the estimate \eqref{4.13}, we achieve
\begin{align*}
\|\nabla u\|_{L^\infty_t(L^2)}^2
+\|\D u\|_{L_t^2(L^2)}^2&\leq \|\nabla u_{\rm in}\|_{L^2}^2
\exp\bigl(C\|u\|_{L^5_tL^5}^5\bigr)\\
&\leq \|\nabla u_{\rm in}\|_{L^2}^2
\exp\bigl(C_{\rm in}N^{-1}\bigr)\leq\f32\|\nabla u_{\rm in}\|_{L^2}^2,
\end{align*}
provided that $N$ is sufficiently large.
This together with \eqref{4.13} completes the proof of Lemma \ref{propuH1}.
\end{proof}

\section{The proof of Theorem \ref{thm2}}\label{secthm2}

The purpose of this section is to present the proof of Theorem \ref{thm2} through a perturbation argument.

\begin{proof}[Proof of Theorem \ref{thm2}]
For some small positive constant $\e$ to be chosen later on,
the assumption \eqref{condsumk} ensures the existence of some integer $n_0$ such that
\begin{equation}\label{6.1}
\sum_{k=n_0+1}^\infty\|\Upsilon_k\|_{L^3}^{\f32}\leq \e^{\f32}.
\end{equation}
And we decompose the initial data in \eqref{initialsum} into three parts:
$u_{\rm in}=u_{\rm in}^{(1)}
+u_{\rm in}^{(2)}+u_{\rm in}^{(3)}$ with
\beq \label{S6eq1}
\begin{split}
&u_{\rm in}^{(1)}\eqdefa\bar u^r_{\rm in}\vv e_r+\bar u^z_{\rm in}\vv e_z,
\quad u_{\rm in}^{(2)}\eqdefa\sum_{k=1}^{n_0} w_{k,\rm in},
\quad u_{\rm in}^{(3)}\eqdefa\sum_{k=n_0+1}^\infty w_{k,\rm in} \andf\\
&w_{k,\rm in}\eqdefa \bigl(a^r_k\vv e_r+N^{-1}_ka^\th_k\vv e_\th+a^z_k \vv e_z\bigr)\cos N_k\th
+\bigl(b^r_k\vv e_r+N^{-1}_kb^\th_k\vv e_\th+b^z_k \vv e_z\bigr)\sin N_k\th.
\end{split} \eeq

We first deduce from Theorem 1 of \cite{LMNP}  that the system
\eqref{NS} with initial data $u^{(1)}_{\rm in}$
admits a unique global solution $\bar u$ which satisfies
\begin{equation}\label{6.2}
\|\bar u\|_{L^\infty_t(H^1)}^2+\|\nabla\bar u\|_{L^2_t(H^1)}^2
\leq C(\|u^{(1)}_{\rm in}\|_{H^2}),\quad\forall\ t>0.
\end{equation}
Here and in all that follows, we always use $C(\star)$ to denote some positive constant depending only on $\star$.

While for $N_0$ large enough and $N_k\geq N_0,$ it follows Theorem \ref{thm1} that the system
\eqref{NS} with initial data $w_{k,\rm in}$, for $k=1,\cdots,n_0,$
has a unique global
solution $w_k,$ which satisfies
\begin{equation}\label{6.3}
\begin{split}
&\|w_k\|_{L^5(\R^+;L^5)}\leq C(M_k)N_k^{-\f15}\leq 1
\with M_k\eqdefa \bigl\|(a^r,a^\th,a^z,b^r,b^\th,b^z)\bigr\|_{\cM}.
\end{split}
\eeq

Let $w$  be the unique local solution of \eqref{NS} with initial data $u_{\rm in}^{(1)}+u_{\rm in}^{(2)}$ on $[0,T^\ast_1[,$
where $T_1^\ast$ is the lifespan of $w.$ Then it is easy to verify that
$v\eqdefa w-\bar u-\sum_{k=1}^{n_0} w_k$ satisfies
\begin{equation}\label{6.4}
\left\{
\begin{aligned}
&\p_t v-\Delta v+\PP\dive\bigl(v\otimes v
+v\otimes \bar u+\bar u\otimes v\bigr)
+\sum_{k=1}^{n_0}\PP\dive
\bigl(v\otimes w_k+w_k\otimes v\bigr)\\
&\qquad+\sum_{k=1}^{n_0}\PP\dive
\bigl(\bar u\otimes w_k+w_k\otimes \bar u\bigr)
+\sum_{1\leq i<j\leq n_0}\PP\dive
\bigl(w_i\otimes w_j+w_j\otimes w_i\bigr)= 0,\\
&\dive v=0,\\
& v|_{t=0} =0,
\end{aligned}
\right.
\end{equation}
where $\PP\eqdefa{\rm I}+\na(-\D)^{-1}\dive$ denotes the Leray projection operator into divergence-free vector fields space.

In what follows, we just present the {\it a priori} estimates.
By taking $L^2$ inner product of the $v$ equation of \eqref{6.4} with $|v|v$
 and using integration by parts, we find
\begin{equation}\begin{split}\label{6.5}
\f13&\f{d}{dt}\|v\|_{L^3}^3+\f49\bigl\|\nabla|v|^{\f32}\bigr\|_{L^2}^2+\bigl\||v|^{\f12}\nabla v\bigr\|_{L^2}^2\\
&\leq C\bigl(\f49\bigl\|\nabla|v|^{\f32}\bigr\|_{L^2}
+\bigl\||v|^{\f12}\nabla v\bigr\|_{L^2}\bigr)
\Bigl\{\bigl\||v|^{\f32}\bigr\|_{L^6}^{\f35}
\bigl\||v|^{\f32}\bigr\|_{L^2}^{\f25}
\Bigl(\|\bar u\|_{L^5}+\sum_{k=1}^{n_0}\|w_k\|_{L^5}\Bigr)\\
&\qquad+\bigl\||v|^{\f32}\bigr\|_{L^6}\|v\|_{L^3}
+\bigl\||v|^{\f12}\bigr\|_{L^{10}}
\Bigl(\sum_{k=1}^{n_0}\|\bar u\|_{L^5}\|w_k\|_{L^5}
+\sum_{1\leq i<j\leq n_0}
\|w_i\|_{L^5}\|w_j\|_{L^5}\Bigr)\Bigr\}\\
&\leq\bigl(\f14+C_1\|v\|_{L^3}\bigr)
\bigl(\f49\bigl\|\nabla|v|^{\f32}\bigr\|_{L^2}^2
+\bigl\||v|^{\f12}\nabla v\bigr\|_{L^2}^2\bigr)
+C\|v\|_{L^3}^{3}\Bigl(\|\bar u\|_{L^5}^5+n_0^4\sum_{k=1}^{n_0}\|w_k\|_{L^5}^5\Bigr)\\
&\quad
+C\Bigl(\sum_{k=1}^{n_0}\|\bar u\|_{L^5}\|w_k\|_{L^5}
+\sum_{1\leq i<j\leq n_0}
\|w_i\|_{L^5}\|w_j\|_{L^5}\Bigr)^{\f52},
\end{split}\end{equation}
where we used the fact that
\begin{align*}
\bigl\||v|^{\f12}\bigr\|_{L^{10}}
=\bigl\||v|^{\f32}\bigr\|_{L^{\f{10}3}}^{\f13}
\leq \bigl\||v|^{\f32}\bigr\|_{L^{2}}^{\f2{15}} \bigl\||v|^{\f32}\bigr\|_{L^{6}}^{\f15}
\leq C\|v\|_{L^3}^{\f15}\bigl\|\na |v|^{\f32}\bigr\|_{L^{2}}^{\f15}.
\end{align*}

Notice that $v|_{t=0} =0$, we  denote
\beq \label{S6eq2} T_1^\star\eqdef \sup\Bigl\{\ t<T^\ast_1: \ \|v\|_{L^\infty_t(L^3)}\leq \f1{4C_1}
\ \ \Bigr\}.\eeq
Then for any $0\leq t\leq T^\star_1$, we deduce from \eqref{6.5} that
\begin{align*}
\f{d}{dt}\|v\|_{L^3}^3
+\bigl\|\nabla|v|^{\f32}\bigr\|_{L^2}^2+\bigl\||v|^{\f12}\nabla v\bigr\|_{L^2}^2
\leq& C\|v\|_{L^3}^{3}\Bigl(\|\bar u\|_{L^5}^5+n_0^4\sum_{k=1}^{n_0}\|w_k\|_{L^5}^5\Bigr)\\
&+C\Bigl(\|\bar u\|_{L^5}^5
+n_0^4\sum_{k=1}^{n_0}\|w_k\|_{L^5}^5\Bigr)^{\f12}
\Bigl(n_0^4\sum_{k=1}^{n_0}\|w_k\|_{L^5}^5\Bigr)^{\f12}.
\end{align*}
By applying Gronwall's inequality and using the estimates \eqref{6.2} and \eqref{6.3}, we obtain
\begin{align*}
\|v&\|_{L^\infty_t(L^3)}^3
+\bigl\|\nabla|v|^{\f32}\bigr\|_{L^2_t(L^2)}^2+\bigl\||v|^{\f12}\nabla v\bigr\|_{L^2_t(L^2)}^2\\
&\leq C \Bigl(n_0^4\sum_{k=1}^{n_0}\|w_k\|_{L^5_t(L^5)}^5\Bigr)^{\f12}
\exp\Bigl(C\|\bar u\|_{L^5_t(L^5)}^5
+Cn_0^4\sum_{k=1}^{n_0}\|w_k\|_{L^5_t(L^5)}^5\Bigr)\\
&\leq C_2n_0^{\f52}N_0^{-\f12}
\exp\Bigl(C_2n_0^5N_0^{-1}\Bigr),
\end{align*}
where $C_2$ is some positive constant depending only on $\|u^{(1)}_{\rm in}\|_{H^2}$ and $M_1,\cdots,M_{n_0}$ defined by \eqref{6.3}.
It is easy to observe that if $N_0$ is sufficiently large, we find
$$\|v\|_{L^\infty_t(L^3)}^3
+\bigl\|\nabla|v|^{\f32}\bigr\|_{L^2_t(L^2)}^2+\bigl\||v|^{\f12}\nabla v\bigr\|_{L^2_t(L^2)}^2
\leq(8C_1)^{-3},\quad 0\leq t\leq T_1^\star,$$
which contradicts with the definition of $T^\star_1$ given by \eqref{S6eq2}. This in turn shows that $T^\star_1=T^\ast_1=\infty,$ and
for any $t>0,$
there holds
\begin{equation}\label{6.6}
\|v\|_{L^\infty_t(L^3)}^3
+\bigl\|\nabla|v|^{\f32}\bigr\|_{L^2_t(L^2)}^2+\bigl\||v|^{\f12}\nabla v\bigr\|_{L^2_t(L^2)}^2
\leq(4C_1)^{-3}.
\end{equation}

Thanks to \eqref{6.2},~\eqref{6.3} and \eqref{6.6}, we deduce that $w=v+\bar u+\sum_{k=1}^{n_0} w_k$ satisfies
\begin{equation}\label{6.7}
\|w\|_{L^5(\R^+\times\R^3)}\leq C\bigl(\|u^{(1)}_{\rm in}\|_{H^2}\bigr),
\end{equation}
provided that $N_0$ is sufficiently large.

Let
$\delta\eqdefa u-w.$ Then $\delta$
 verifies
\begin{equation}\label{S6eq3}
\left\{
\begin{aligned}
&\p_t \delta-\Delta \delta
+\PP\dive\bigl(\delta\otimes \delta
+\delta\otimes w+w\otimes \delta\bigr)=0,\\
&\dive \delta=0,\\
& \delta|_{t=0}=u_{\rm in}^{(3)}=\sum_{k=n_0+1}^\infty w_{k,\rm in},
\end{aligned}
\right.
\end{equation}
It is easy to observe that the system \eqref{S6eq3} has a unique solution on $[0,T_2^\ast[$ with
$T_2^\ast$ being the maximal time of existence. Therefore to complete the proof of Theorem \ref{thm2},
it remains to prove that $T_2^\ast=\infty.$

Indeed, for any $t<T_2^\ast,$ we get, by a similar derivation of \eqref{6.5}, that
\beq\label{S6eq4}
\begin{split}
\f13&\f{d}{dt}\|\delta\|_{L^3}^3
+\f49\bigl\|\nabla|\delta|^{\f32}\bigr\|_{L^2}^2+\bigl\||\delta|^{\f12}\nabla \delta\bigr\|_{L^2}^2\\
&\leq C\bigl(\f49\bigl\|\nabla|\delta|^{\f32}\bigr\|_{L^2}+\bigl\||\delta|^{\f12}\nabla \delta\bigr\|_{L^2}\bigr)
\Bigl(\bigl\||\delta|^{\f32}\bigr\|_{L^6}
\|\delta\|_{L^3}
+\bigl\||\delta|^{\f32}\bigr\|_{L^6}^{\f35}
\bigl\||\delta|^{\f32}\bigr\|_{L^2}^{\f25}\|w\|_{L^5}\Bigr)\\
&\leq\bigl(\f14+C_1\|\delta\|_{L^3}\bigr)\bigl(\f49
\bigl\|\nabla|\delta|^{\f32}\bigr\|_{L^2}^2+\bigl\||\delta|^{\f12}\nabla \delta\bigr\|_{L^2}^2\bigr)
+C\|\delta\|_{L^3}^{3}\|w\|_{L^5}^5.
\end{split}\eeq
It follows from \eqref{6.1}, \eqref{S6eq1} and Hausdorff-Young's inequality guarantee that
$$\bigl\|u_{\rm in}^{(3)}\bigr\|_{L^3}
=\Bigl\|\sum_{k=n_0+1}^\infty w_{k,\rm in}\Bigr\|_{L^3}
\leq C\Bigl(\sum_{k=n_0+1}^\infty\|\Upsilon_k\|_{L^3}^{\f32}
\Bigr)^{\f23}<C\e.$$
Hence thanks to \eqref{6.7} and \eqref{S6eq4}, we get,  by a similar derivation of \eqref{6.6}, that
\begin{equation}\label{6.9}
\begin{split}
\|\delta\|_{L^\infty_t(L^3)}^3
+\f12\bigl\|\nabla|\delta|^{\f32}\bigr\|_{L^2_t(L^2)}^2
\leq &\bigl\|u_{\rm in}^{(3)}\bigr\|_{L^3}^3\exp\Bigl(C\|w\|_{L^5_t(L^5)}^5\Bigr)\\
\leq & C\bigl(\|u_{\rm in}^{(1)}\|_{H^2}\bigr)\e
\leq \bigl(4C_1\bigr)^{-3},\quad\forall\ t<T_2^\ast,
\end{split}
\end{equation} provided that $\e$ is chosen to be so small that $\e\leq \bigl(4C_1\bigr)^{-3}C^{-1}\bigl(\|u_{\rm in}^{(1)}\|_{H^2}\bigr).$
This in turn shows that $T^\ast_2=\infty.$

Due to $u=\delta+w,$
we deduce from  \eqref{6.6} and \eqref{6.9} that
\begin{equation}\label{6.10}
\|u\|_{L^5_t(L^5)}\leq C\bigl(\|u^{(1)}_{\rm in}\|_{H^2}\bigr),
\quad\forall\ t\geq0.
\end{equation}
Then the classical Ladyzhenskaya-Prodi-Serrin's criteria (see Theorem 1.2 of \cite{ISS03} for instance) ensures that the strong solution $u$ of \eqref{NS} exists globally in time. This finishes the proof of Theorem \ref{thm2}.
\end{proof}

\appendix

\section{The proof  of Lemmas \ref{propa1} and  \ref{propa2}}\label{sectA}

\begin{proof}[{\bf Proof of Lemma \ref{propa1}}]
 Let us first focus on the case when $s$ is away from zero. Indeed for any $s>4$ and any $\th\in [0,\pi]$,
we get, by using Taylor's expansion, that
there exists some $\tau\in(0,1)$ so that
\begin{equation}\begin{split}\label{a4}
\Bigl(1+\f{2(1-\cos\th)}s\Bigr)^{-\f12}
=&\sum_{j=0}^{m-1}\binom{-\f12}{j}
\Bigl(\f{2(1-\cos\th)}s\Bigr)^j
+\binom{-\f12}{m}\Bigl(\f{2(1-\cos\th)}s\Bigr)^m\\
&+\binom{-\f12}{m}\Bigl(\Bigl(1+\tau\f{2(1-\cos\th)}s\Bigr)^{-(\f12+m)}-1\Bigr)
\Bigl(\f{2(1-\cos\th)}s\Bigr)^m,
\end{split}\end{equation}
here the first term on the right hand side of \eqref{a4} vanishes when $m=0$.

Since $\cos^j\th$ can be written as a linear combination
of $1,~\cos\th,~\cdots,~\cos j\th$, and
$$\int_0^\pi\cos m\th\cdot\cos j\th\,d\th=0,\quad\forall\ j=0,1,\cdots,m-1,$$
we  deduce that
$$\f{1}{\sqrt s}\int_0^\pi\cos m\th
\sum_{j=0}^{m-1}\binom{-\f12}{j}
\Bigl(\f{2(1-\cos\th)}s\Bigr)^j\,d\th=0,$$
and
\begin{align*}
\f{1}{\sqrt s}\int_0^\pi\cos m\th\Bigl(\f{2(1-\cos\th)}s\Bigr)^m\,d\th
&=(-2)^ms^{-m-\f12}\int_0^\pi\cos m\th\cos^m\th\,d\th\\
&=(-1)^m2s^{-m-\f12}\int_0^\pi\cos m\th
\cos m\th\,d\th\\
&=(-1)^m\pi s^{-m-\f12}.
\end{align*}
On the other hand, noticing that
$$\Bigl(1+\tau\f{2(1-\cos\th)}s\Bigr)^{-(\f12+m)}-1
\in(-1,0),$$
from which, we infer
\begin{align*}
&\f{1}{\sqrt s}\int_0^\pi\Bigl|\Bigl(1+\tau\f{2(1-\cos\th)}s\Bigr)^{-(\f12+m)}-1\Bigr|
\times\Bigl|\f{2(1-\cos\th)}s\Bigr|^m\,d\th
\leq 4^m\pi
 s^{-m-\f12}.
\end{align*}
By summarizing
 the above three estimates
and using the fact that $\bigl|\binom{-\f12}{m}\bigr|
\thicksim\f{1}{\sqrt{1+m}}$, we achieve
\begin{equation}\label{a5}
\bigl|s^{m+\f12}F_m(s)\bigr|\leq4^m\pi,
\quad\forall\ s>4.
\end{equation}

In the case when $s\rightarrow0^+$, we write
\begin{align*}
F_m(s)=&\int_0^\pi\f{\cos m\th-\cos\f{\th}2+\f s2+(1-\cos\th)}
{\sqrt{s+2(1-\cos\th)}}\,d\th\\
&+\int_0^\pi\f{\cos\f{\th}2}{\sqrt{s+2(1-\cos\th)}}\,d\th
-\f12\int_0^\pi \sqrt{s+2(1-\cos\th)}\,d\th\eqdef\cC_1+\cC_2+\cC_3.
\end{align*}
It is easy to calculate that
$$\cC_2=\int_0^\pi\f{\cos\f{\th}2}{\sqrt{s+4\sin^2\f{\th}2}}\,d\th
=\int_0^1\f{dx}{\sqrt{\f s4+ x^2}}=\ln\Bigl(\sqrt{\f 4s}+\sqrt{1+\f4s}\Bigr),$$
and
$$\cC_3=-\f12\int_0^\pi \sqrt{2(1-\cos\th)}\,d\th+\cO(s)=-2+\cO(s).$$
While by using Taylor's expansion once again, we write
\begin{align*}
\cC_1&=\int_0^\pi\f{\cos m\th-\cos\f{\th}2+(1-\cos\th)}
{\sqrt{s+2(1-\cos\th)}}\,d\th+\cO(\sqrt s)\\
&=2\int_0^{\f\pi2}\f{1-\cos\varphi-\cos 2\varphi+\cos 2m\varphi}
{\sqrt{s+4\sin^2\varphi}}\,d\varphi+\cO(\sqrt s)\eqdef g_s(m)+\cO(\sqrt s).
\end{align*}
It is easy to verify that $g_s(0)$ and $g_s(1)$ are some finite constants.
As for $m\geq1$, we first get, by using integration by parts, that
\begin{align*}
g_s'(m)=&-4\int_0^{\f\pi2}\f{\varphi\sin 2m\varphi}
{\sqrt{s+4\sin^2\varphi}}\,d\varphi\\
=&\f2m\Bigl(\f{(-1)^m\pi}{2\sqrt{s+4}}-\lim_{\varphi\to 0_+}\f{\varphi}{\sqrt{s+4\sin^2\varphi}}\Bigr)-\f2m\int_0^{\f\pi2}
\cos 2m\varphi\cdot\bigl(\f\varphi{\sqrt{s+4\sin^2\varphi}}\bigr)'\,d\varphi.\end{align*}
This together with the fact that
$$0\leq \lim_{\varphi\to 0_+}\f{\varphi}{\sqrt{s+4\sin^2\varphi}}\leq \lim_{\varphi\to 0_+}\f{\varphi}{2\sin\varphi}=\f12,$$
and
$\bigl(\f\varphi{\sqrt{s+4\sin^2\varphi}}\bigr)'$
is bounded for $\varphi\in[0,\f\pi2]$, implies that $m|g_s'(m)|$
is uniformly bounded for any $m\geq1$. Therefore, for any $m\geq1$, we find
$$|g_s(m)|\leq |g_s(1)|+\int_1^m|g_s'(x)|\,dx\leq C+\int_1^m\f1x\,dx
\leq C\ln(2+m),$$
which ensures that
$$|\cC_1|\leq C\ln(2+m)+\cO(\sqrt s).$$
By summarizing the estimates for $\cC_1,~\cC_2$ and $\cC_3$,
we deduce that there exists some $\ve>0$ so that
\begin{equation}\label{a6}
\bigl|s^\lambda F_m(s)\bigr|<C\lambda^{-1}\ln(2+m),\quad\forall\ \lambda>0,~ s<\ve.
\end{equation}

Finally when $\ve\leq s\leq 4$, it is easy to see that
\begin{equation}\label{a7}
\bigl|s^\lambda F_m(s)\bigr|\leq s^\lambda \int_0^\pi\f1{\sqrt{s}}\,d\th
=\pi s^{\lambda-\f12}
\leq\left\{\begin{array}{l}
\displaystyle \ve^{\lambda-\f12}\pi,\quad\text{if}\quad0<\lambda<\f12,\\
\displaystyle 4^{\lambda-\f12}\pi ,\quad\text{if}\quad\lambda\geq\f12.
\end{array}\right.
\end{equation}

The estimates \eqref{a5}-\eqref{a7} ensures the first
estimate for $F_m(s)$ in \eqref{a3}.
The other two estimates for $F_m'(s)$ and $F_m''(s)$ can be derived along the same line, we omit the details here.
\end{proof}

\begin{proof}[{\bf Proof of Lemma \ref{propa2}}]
(i) We first get, by using
integration by parts, that
\begin{equation}\label{a10}
s^\al F_m(s)=\f1m\f{s^\al\sin m\th}{\sqrt{s+2(1-\cos\th)}}\Bigl|_0^\pi
+\f{s^\al}m\int_0^\pi\sin m\th\sin\th\cdot\bigl(s+2(1-\cos\th)\bigr)^{-\f32}\,d\th.
\end{equation}
Observing that for $
0<\al\leq \f12$ and $s>0$, one has
$$\f{|2s^\al\sin\f\th2|}{\sqrt{s+4\sin^2{\f\th2}}}
\leq\f{|2\sin\f\th2|}{\bigl(s+4\sin^2{\f\th2}\bigr)^{\f12-\al}}
\leq|2\sin\f\th2|^{2\al},$$
while for $\al>\f12,$ one has
$$\f{|2s^\al\sin\f\th2|}{\sqrt{s+4\sin^2{\f\th2}}}
\leq 2s^{\al-\f12}|\sin\f\th2|,$$
which implies
$$\lim_{\th\rightarrow0_+}\f1m\f{s^\al\sin m\th}{\sqrt{s+2(1-\cos\th)}}
=\lim_{\th\rightarrow0_+}\f{2s^\al\sin\f\th2}{\sqrt{s+4\sin^2{\f\th2}}}=0.$$
By inserting the above estimate into \eqref{a10}, we obtain
\begin{equation}\label{a11}
s^\al F_m(s)=\f{s^\al}m G_m(s),
\end{equation}
where
\begin{align*}
G_m(s)&\eqdefa\int_0^\pi\sin m\th\sin\th
\cdot\bigl(s+2(1-\cos\th)\bigr)^{-\f32}\,d\th\\
&=\f12\int_0^\pi\Bigl(\cos (m-1)\th-\cos(m+1)\th\Bigr)
\cdot\bigl(s+2(1-\cos\th)\bigr)^{-\f32}\,d\th.
\end{align*}

For $s>4,$ we get, by using
 Taylor's expansion, that
\begin{align*}
\Bigl|\Bigl(1+\f{2(1-\cos\th)}s\Bigr)^{-\f32}
-\Bigl(1-\f{3(1-\cos\th)}s\Bigr)\Bigr|\leq \f{C}{s^2},
\quad\forall\ s>4,
\end{align*}
which implies
\begin{align*}
\Bigl|G_m(s)-s^{-\f32}\int_0^\pi\sin m\th\sin\th
\Bigl(1-\f{3(1-\cos\th)}s\Bigr)\,d\th\Bigr|\leq C{s^{-\f72}}.
\end{align*}
This together with the fact that
\begin{align*}
\int_0^\pi\cos (m-1)\th
\Bigl(1-\f{3(1-\cos\th)}s\Bigr)\,d\th=\int_0^\pi\cos (m+1)\th
\Bigl(1-\f{3(1-\cos\th)}s\Bigr)\,d\th=0
\end{align*}
leads to
\begin{equation}\label{a12}
\bigl|s^{\f72}G_m(s)\bigr|\leq C,
\quad\forall\ s>4,~ m\geq3.
\end{equation}

Next, let us consider the case when $s$ is small. Observing that
$$\Bigl|s\cdot\sin m\th\sin\th
\bigl(s+2(1-\cos\th)\bigr)^{-\f32}\Bigr|
=\Bigl|2\sin m\th\cos{\f\th2}\Bigl(s\cdot\sin{\f\th2}
\bigl(s+4\sin^2{\f\th2}\bigr)^{-\f32}\Bigr)\Bigr|\leq2,$$
we find
\begin{equation}\label{a13}
\bigl|sG_m(s)\bigr|\leq2\pi,
\quad\forall\ 0<s\leq4,~ m\geq3.
\end{equation}

Combining the estimates \eqref{a11}-\eqref{a13}
gives rise to the first inequality in \eqref{a8}.
And the second estimate in \eqref{a8}
for $F_m'(s)$ and $F_m''(s)$ can be derived similarly.
\smallskip

\noindent(ii) As in the previous step, here we only present the proof for the first
inequality in \eqref{a9}. Once again, we first get, by  using Taylor's expansion,  that  for any $s>4$
\begin{align*}
\Bigl|\Bigl(1+\f{2(1-\cos\th)}s\Bigr)^{-\f12}
-\Bigl(1-\f{1-\cos\th}s\Bigr)\Bigr|\leq \f{C}{s^2},
\end{align*}
which implies
\begin{align*}
\Bigl|F_m(s)-\f{1}{\sqrt s}\int_0^\pi\cos m\th
\Bigl(1-\f{1-\cos\th}s\Bigr)\,d\th\Bigr|\leq C{s^{-\f52}}.
\end{align*}
This together with the fact that
\begin{align*}
\int_0^\pi\cos m\th
\Bigl(1-\f{1-\cos\th}s\Bigr)\,d\th=0,\quad\forall\ m\geq3,
\end{align*}
leads to
$$\bigl|s^{\f52}F_m(s)\bigr|\leq C,
\quad\forall\ s>4,~ m\geq3,$$
which together with \eqref{a6} and \eqref{a7}
yields  the first
inequality in \eqref{a9}.
\end{proof}

\section{The proof of \eqref{3.39} with $j=2,3,4$}\label{sectB}

\begin{proof}[{\bf Proof of \eqref{3.39} with $j=2,3$}] In view of \eqref{S2eq6d},
we first get, by  using
integration by parts and Proposition \ref{lemLm}, that
\begin{align*}
&\Psi_k^{(4,2)}\lesssim\bigl\|r^{2-\f7p}\cL_{kN}^{-1}
\cA_k^{(4,2)}\bigr\|_{L^{\f p2}_t(L^p)}
\bigl\|\wt\nabla \bigl(r^{\f{p-3}2}|\wt u_k|^{\f p2}\bigr)\bigr\|_{L^2_t(L^2)}
\bigl\|r^{-\f72+\f7p+\f p2}|\wt u_k|^{\f{p}2-1}\bigr\|_{L^{\f{2p}{p-4}}_t(L^{\f{2p}{p-2}})}\\
&\lesssim_\e(kN)^{-\f12+\f1p+\e}\sum_{(k_1,k_2)\in\Omega_k}
\Bigl(\bigl\|r^{\f{2(p-5)}p}\upsilon_{k_1}\otimes\upsilon_{k_2}
\bigr\|_{L^{\f p2}_t(L^{\f p2})}
+(k_1+k_2)N\bigl\|r^{\f{2(p-5)}p}\vt_{k_1}\ur_{k_2}\bigr\|_{L^{\f p2}_t(L^{\f p2})}\Bigr)\\
&\qquad\times\bigl\|\wt\nabla \bigl(r^{\f{p-3}2}
|\wt u_k|^{\f p2}\bigr)\bigr\|_{L^2_t(L^2)}
\bigl\|r^{\f{p-3}2}|\wt u_{k}|^{\f p2}\bigr\|_{L^\infty_t(L^2)}^{\f2p}
\bigl\|r^{\f{p-5}2}|\wt u_{k}|^{\f p2}\bigr\|_{L^2_t(L^2)}^{1-\f4p}
\eqdef \Psi_k^{(4,2,1)}+\Psi_k^{(4,2,2)}.
\end{align*}
Similarly, one has
\begin{equation}\begin{split}\label{3.35}
\Psi_k^{(4,3)}
&\lesssim_\e(kN)^{\f12+\f1p+\e}\sum_{(k_1,k_2)\in\Omega_k}
\bigl\|r^{\f{2(p-5)}p }\upsilon_{k_1}\vt_{k_2}
\bigr\|_{L^{\f p2}_t(L^{\f p2})}\\
&\qquad\times\bigl\|\wt\nabla \bigl(r^{\f{p-3}2}
|\wt u_k|^{\f p2}\bigr)\bigr\|_{L^2_t(L^2)}
\bigl\|r^{\f{p-3}2}|\wt u_{k}|^{\f p2}\bigr\|_{L^\infty_t(L^2)}^{\f2p}
\bigl\|r^{\f{p-5}2}|\wt u_{k}|^{\f p2}\bigr\|_{L^2_t(L^2)}^{1-\f4p}.
\end{split}\end{equation}
Noticing that
\begin{align*}
\bigl\|r^{\f{2(p-5)}p}\upsilon_{k_1}\otimes\upsilon_{k_2}
\bigr\|_{L^{\f p2}_t(L^{\f p2})}
\leq &\bigl\|r^{\f{p-5}2}|\upsilon_{k_1}|^{\f p2}\bigr\|_{L^2_t(L^2)}^{\f2p}
\bigl\|r^{\f{p-5}2}|\upsilon_{k_2}|^{\f p2}\bigr\|_{L^2_t(L^2)}^{\f2p},\\
\bigl\|r^{\f{2(p-5)}p}\upsilon_{k_1}\vt_{k_2}
\bigr\|_{L^{\f p2}_t(L^{\f p2})}
\leq&\bigl\|r^{\f{p-5}2}|\upsilon_{k_1}|^{\f p2}\bigr\|_{L^2_t(L^2)}^{\f2p}
\bigl\|r^{\f{p-5}2}|\vt_{k_2}|^{\f p2}\bigr\|_{L^2_t(L^2)}^{\f2p},\end{align*}
and there is $(kN)^{2}$ in the front of
$\bigl\|r^{\f{p-5}2}|\wt u_{k}|^{\f p2}\bigr\|_{L^2_t(L^2)}^2$
(resp. $\bigl\|r^{\f{p-5}2}|\vt_{k}|^{\f p2}\bigr\|_{L^2_t(L^2)}^2$)
in the definition of $E_p^{r,z}$  given by \eqref{defEpr} (resp. $E_p^{\th}$ by \eqref{defEpth}),
and there is an additional $(kN)^{\f p2}$ in front of the terms
in $E_p^\th$ than the corresponding terms in $E_p^{r,z},$ so that along the same line to the estimate of
$\int_0^t I_k^{(4,1)}\,dt'$ (see the proof of Lemma \ref{S2lem1}),
$\Psi_k^{(4,2,1)}$ and $\Psi_k^{(4,3)}$  satisfy
\begin{equation}\begin{split}\label{3.36}
N^{-2\al_p}\Psi_1^{(4,2,1)}
+\sum_{k\geq2}(kN)^{2\beta_p} \Psi_k^{(4,2,1)}
&\lesssim_\e N^{-\f12+\f1p+\e}N^{-\f7p+1}N^{-1+\f4p} E_p^{1+\f1p}\lesssim E_p^{1+\f1p},
\end{split}\end{equation}
and
\begin{equation}\begin{split}\label{3.37}
N^{-2\al_p}\Psi_1^{(4,3)}
+\sum_{k\geq2}(kN)^{2\beta_p} \Psi_k^{(4,3)}
&\lesssim_\e N^{\f12+\f1p+\e}N^{-\f7p+1}N^{-1+\f4p}N^{-\f12} E_p^{1+\f1p}\lesssim E_p^{1+\f1p}.
\end{split}\end{equation}

The estimate of $\Psi_k^{(4,2,2)}$ is more complicated.
We shall split it into two parts according to the values of $k_1+k_2$.
The first part, which we denote as $\Psi_k^{(4,2,2,1)},$ contains terms in $\Psi_k^{(4,2,2)}$  with $k_1+k_2\leq3k$,
can be handled as follows
\begin{align*}\Psi_k^{(4,2,2,1)}\lesssim
&(kN)^{\f12+\f1p+\e}\sum_{\substack{(k_1,k_2)\in\Omega_k\\k_2\leq2k}}
\bigl\|r^{2(1-\f5p)}\vt_{k_1}\ur_{k_2}\bigr\|_{L^{\f p2}_t(L^{\f p2})}\\
&\times\bigl\|\wt\nabla \bigl(r^{\f{p-3}2}
|\wt u_k|^{\f p2}\bigr)\bigr\|_{L^2_t(L^2)}
\bigl\|r^{\f{p-3}2}|\wt u_{k}|^{\f p2}\bigr\|_{L^\infty_t(L^2)}^{\f2p}
\bigl\|r^{\f{p-5}2}|\wt u_{k}|^{\f p2}\bigr\|_{L^2_t(L^2)}^{1-\f4p},
\end{align*}
which can be controlled by the right-hand side of \eqref{3.35}.
Hence $\Psi_k^{(4,2,2,1)}$ shares the same estimate for $\Psi_k^{(4,3)}$
in \eqref{3.37}. Precisely, there holds
\begin{equation}\begin{split}\label{3.38}
N^{-2\al_p}\Psi_1^{(4,2,2,1)}
+\sum_{k\geq2}(kN)^{2\beta_p} \Psi_k^{(4,2,2,1)}
\lesssim E_p^{1+\f1p}.
\end{split}\end{equation}

For $(k_1,k_2)\in\Omega_k$ with $k_1+k_2>3k$,
which satisfies also $|k_1-k_2|=k$, one has
\begin{equation}\label{3.38a}
\min\{k_1,k_2\}\geq k+1,\quad
\f12<\f{k_1}{k_2}<2,\quad 1<\f{k_1+k_2}{k_2}<3.
\end{equation}
Hence
the remaining part $\Psi_k^{(4,2,2,2)}$ in $\Psi_k^{(4,2,2)}$
can be bounded by
\beq\label{3.38b}
\begin{split}
\Psi_k^{(4,2,2,2)}\lesssim&(kN)^{-\f12+\f1p+\e}
\sum_{\substack{k_1+k_2>3k\\|k_1-k_2|=k}}
k_2N\bigl\|r^{2(1-\f5p)}\vt_{k_1}\ur_{k_2}\bigr\|_{L^{\f p2}_t(L^{\f p2})}\\
&\times\bigl\|\wt\nabla\bigl(r^{\f{p-3}2}
|\wt u_k|^{\f p2}\bigr)\bigr\|_{L^2_t(L^2)}
\bigl\|r^{\f{p-3}2}|\wt u_{k}|^{\f p2}\bigr\|_{L^\infty_t(L^2)}^{\f2p}
\bigl\|r^{\f{p-5}2}|\wt u_{k}|^{\f p2}\bigr\|_{L^2_t(L^2)}^{1-\f4p}.
\end{split} \eeq
By using \eqref{3.38a} again, the summation part in \eqref{3.38b} can be bounded by
\begin{align*}
&\sum_{\substack{k_1+k_2>3k\\|k_1-k_2|=k}}
k_2N\bigl\|r^{2(1-\f5p)}\vt_{k_1}\ur_{k_2}\bigr\|_{L^{\f p2}_t(L^{\f p2})}
\lesssim\Bigl(\sum_{k_1=k+1}^\infty(k_1N)^{\f p2+2+2\beta_p}
\bigl\|r^{1-\f5p}\vt_{k_1}\bigr\|_{L^p_t(L^p)}^p\Bigr)^{\f1p}\\
&\qquad\qquad\qquad\times\Bigl(\sum_{k_2=k+1}^\infty\bigl(k_2N\bigr)^{2+2\beta_p}
\bigl\|r^{1-\f5p}\ur_{k_2}\bigr\|_{L^p_t(L^p)}^p\Bigr)^{\f1p}
\Bigl(\sum_{k_2=k+1}^\infty\bigl(k_2N\bigr)^{\bigl(\f12-\f4p(1+\beta_p)\bigr)\f{p}{p-2}}\Bigr)^{1-\f2p},
\end{align*}
and the assumptions: $\beta_p>1$ and $p<6,$ ensures that
$$\Bigl(\sum_{k_2=k+1}^\infty\bigl(k_2N\bigr)
^{\bigl(\f12-\f4p(1+\beta_p)\bigr)\f{p}{p-2}}\Bigr)^{1-\f2p}
\lesssim(kN)^{\f32-\f6p-\f4p\beta_p}\lesssim(kN)^{\f32-\f{10}p}.$$
As a result, it comes out
\begin{align*}
\sum_{k_2=k+1}^\infty
k_2N\bigl\|r^{2(1-\f5p)}\vt_{k_1}\ur_{k_2}\bigr\|_{L^{\f p2}_t(L^{\f p2})}
\lesssim (kN)^{\f32-\f{10}p}E_p^{\f2p}.
\end{align*}
By inserting the above estimate into \eqref{3.38b}, we obtain
\begin{align*}
\Psi_k^{(4,2,2,2)}\lesssim(kN)^{1-\f9p+\e}E_p^{\f2p}
\bigl\|\wt\nabla \bigl(r^{\f{p-3}2}
|\wt u_k|^{\f p2}\bigr)\bigr\|_{L^2_t(L^2)}
\bigl\|r^{\f{p-3}2}|\wt u_{k}|^{\f p2}\bigr\|_{L^\infty_t(L^2)}^{\f2p}
\bigl\|r^{\f{p-5}2}|\wt u_{k}|^{\f p2}\bigr\|_{L^2_t(L^2)}^{1-\f4p},
\end{align*}
from which, we infer
\begin{align*}
N^{-2\al_p}\Psi_1^{(4,2,2,2)}
+\sum_{k\geq2}(kN)^{2\beta_p} \Psi_k^{(4,2,2,2)}
&\lesssim\Bigl(N^{-\f5p-\f2p\al_p+\e}+\Bigl(
\sum_{k\geq2}(kN)^{(-\f5p+\f2p\beta_p+\e)p}\Bigr)^{\f1p}\Bigr)E_p^{1+\f1p}\\
&\lesssim E_p^{1+\f1p}.
\end{align*}
Together with \eqref{3.36}-\eqref{3.38},
we conclude the proof of \eqref{3.39} with $j=2,3$.
\end{proof}

\begin{proof}[{\bf Proof of \eqref{3.39} with $j=4$}] In view of \eqref{S2eq6d},
we  get, by using H\"older's inequality, that
\begin{equation}\begin{split}\label{3.40}
\Psi_k^{(4,4)}
&\lesssim\bigl\|r^{3-\f7p}\wt\nabla\cL_{kN}^{-1}\cA_k^{(4,4)}\bigr\|_{L_t^{\f p2}(L^p)}
\bigl\|r^{\f{p-3}2}|\wt u_k|^{\f p2}\bigr\|_{L^\infty_t(L^2)}^{\f2p}
\bigl\|r^{\f{p-5}2}|\wt u_k|^{\f p2}\bigr\|_{L^2_t(L^2)}^{2-\f4p}.
\end{split}\end{equation}
It follows from \eqref{S2eq6} and Proposition \ref{lemLmp} that for sufficiently small $\e>0$,
\beq\label{3.40a}
\begin{split}
\bigl\|&r^{3-\f7p}\wt\nabla\cL_{kN}^{-1}\cA_k^{(4,4)}\bigr\|_{L_t^{\f p2}(L^p)}
\lesssim kN\sum_{(k_1,k_2)\in\Omega_k}k_2N\bigl\|r^{3-\f7p}\wt\nabla\cL_{kN}^{-1}
\bigl(\f1r\times\f{\vt_{k_1}\vt_{k_2}}r\bigr)\bigr\|_{L_t^{\f p2}(L^p)}\\
&\lesssim_\e(kN)^{\f12+\f1p+\e}\sum_{(k_1,k_2)\in\Omega_k}k_2N
\bigl\|r^{\f{p-5}p}\vt_{k_1}\bigr\|_{L^p_t(L^p)}
\bigl\|r^{\f{p-5}p}\vt_{k_2}\bigr\|_{L^p_t(L^p)}\\
&\lesssim_\e(kN)^{\f12+\f1p+\e}\sum_{\substack{(k_1,k_2)\in\Omega_k\\k_2\geq k_1}}
k_2N\bigl\|r^{\f{p-5}p}\vt_{k_1}\bigr\|_{L^p_t(L^p)}
\bigl\|r^{\f{p-5}p}\vt_{k_2}\bigr\|_{L^p_t(L^p)},
\end{split}\eeq
where we  used symmetry of the indices $k_1,k_2$ in $\Omega_k$ in the last step.

When $k=1$ in \eqref{3.40a}, we have
\begin{equation}\begin{split}\label{3.41}
\bigl\|&r^{3-\f7p}\wt\nabla\cL_{N}^{-1}\cA_1^{(4,4)}\bigr\|_{L_t^{\f p2}(L^p)}
\lesssim_\e N^{\f12+\f1p+\e}\sum_{j=2}^\infty jN
\bigl\|r^{\f{p-5}p}\vt_{j-1}\bigr\|_{L^p_t(L^p)}
\bigl\|r^{\f{p-5}p}\vt_{j}\bigr\|_{L^p_t(L^p)}\\
&\lesssim_\e N^{\f12+\f1p+\e}
\Bigl(\sum_{j=2}^\infty(jN)^{\f p2+2+2\beta_p}
\bigl\|r^{\f{p-5}p}\vt_{j}\bigr\|_{L^p_t(L^p)}^p\Bigr)^{\f1p}\\
&\quad\times\Bigl(N^{\f p2+2-2\al_p}
\bigl\|r^{\f{p-5}p}\vt_1\bigr\|_{L^p_t(L^p)}^p
+\sum_{j=3}^\infty \bigl((j-1)N\bigr)^{\f p2+2+2\beta_p}
\bigl\|r^{\f{p-5}p}\vt_{j-1}\bigr\|_{L^p_t(L^p)}^p\Bigr)^{\f1p}\\
&\quad\times\Bigl(N^{-\f4p-\f2p\beta_p+\f2p\al_p}
+\Bigl(\sum_{j=2}^\infty (jN)^{-\f{4(1+\beta_p)}{p-2}}\Bigr)^{1-\f2p}\Bigr)\\
&\lesssim_\e N^{\f12-\f3p-\f2p\beta_p+\f2p\al_p+\e}E_p^{\f2p}.
\end{split}\end{equation}
Similarly, when $k=2$ in \eqref{3.40a}, we have
\begin{equation}\begin{split}\label{3.42}
\bigl\|r^{3-\f7p}\wt\nabla\cL_{N}^{-1}\cA_2^{(4,4)}\bigr\|_{L_t^{\f p2}(L^p)}
&\lesssim_\e N^{\f12+\f1p+\e}\Bigl(N\bigl\|r^{\f{p-5}p}\vt_1\bigr\|_{L^p_t(L^p)}
\bigl\|r^{\f{p-5}p}\vt_1\bigr\|_{L^p_t(L^p)}\\
&\qquad\qquad\qquad+\sum_{j=3}^\infty jN
\bigl\|r^{\f{p-5}p}\vt_{j-2}\bigr\|_{L^p_t(L^p)}
\bigl\|r^{\f{p-5}p}\vt_{j}\bigr\|_{L^p_t(L^p)}\Bigr)\\
&\lesssim_\e N^{\f12-\f3p+\f4p\al_p+\e}E_p^{\f2p}.
\end{split}\end{equation}
And for the case when $k\geq3$ in \eqref{3.40a}, there holds
\begin{align*}
\bigl\|r^{3-\f7p}\wt\nabla\cL_{N}^{-1}\cA_k^{(4,4)}\bigr\|_{L_t^{\f p2}(L^p)}
\lesssim_\e& N^{\f12+\f1p+\e}\Bigl((k\pm1)N\bigl\|r^{\f{p-5}p}\vt_1\bigr\|_{L^p_t(L^p)}
\bigl\|r^{\f{p-5}p}\vt_{k\pm1}\bigr\|_{L^p_t(L^p)}\\
&+\sum_{j=[\f{k+1}2]}^{k-2} jN
\bigl\|r^{\f{p-5}p}\vt_{k-j}\bigr\|_{L^p_t(L^p)}
\bigl\|r^{\f{p-5}p}\vt_{j}\bigr\|_{L^p_t(L^p)}\\
&+\sum_{j=k+2}^{\infty} jN
\bigl\|r^{\f{p-5}p}\vt_{j-k}\bigr\|_{L^p_t(L^p)}
\bigl\|r^{\f{p-5}p}\vt_{j}\bigr\|_{L^p_t(L^p)}\Bigr)\eqdefa \sum_{\ell=1}^3\cF_k^\ell.
\end{align*}
$\cF_k^2$ actually vanishes when $k=3$,
and when $k\geq4$, we have
\begin{align*}
\cF_k^2
\lesssim &\Bigl(\sum_{j=[\f{k+1}2]}^{k-2} (jN)^{\f p2+2+2\beta_p}
\bigl\|r^{\f{p-5}p}\vt_j\bigr\|_{L^p_t(L^p)}^p\Bigr)^{\f1p}
\Bigl(\sum_{j=[\f{k+1}2]}^{k-2} \bigl((k-j)N\bigr)^{\f p2+2+2\beta_p}
\bigl\|r^{\f{p-5}p}\vt_{k-j}\bigr\|_{L^p_t(L^p)}^p\Bigr)^{\f1p}\\
&\times\Bigl(\sum_{j=[\f{k+1}2]}^{k-2}(jN)^{\left(\f12-\f2p-\f2p\beta_p\right)\f{p}{p-2}}
\bigl((k-j)N\bigr)^{-\left(\f12+\f2p+\f2p\beta_p\right)\f{p}{p-2}}\Bigr)^{1-\f2p}\\
\lesssim&(kN)^{\f12-\f2p-\f2p\beta_p}N^{-\f12-\f2p-\f2p\beta_p}E_p^{\f2p},
\end{align*}
where in the last step, we  used the facts that
$$-\left(\f12+\f2p+\f2p\beta_p\right)\f{p}{p-2}
<-\left(\f12+\f4p\right)\f{p}{p-2}<-\f74,$$
and for any
$[\f{k+1}2]\leq j\leq k-2$, there holds $jN\thicksim kN$, so that
\begin{align*}
&\Bigl(\sum_{j=[\f{k+1}2]}^{k-2}(jN)^{\left(\f12-\f2p-\f2p\beta_p\right)\f{p}{p-2}}
\bigl((k-j)N\bigr)^{-\left(\f12+\f2p+\f2p\beta_p\right)\f{p}{p-2}}\Bigr)^{1-\f2p}\\
&\quad\thicksim(kN)^{\f12-\f2p-\f2p\beta_p}
\Bigl(\sum_{j=[\f{k+1}2]}^{k-2}
\bigl((k-j)N\bigr)^{(-\f12-\f2p-\f2p\beta_p)\cdot\f{p}{p-2}}\Bigr)^{1-\f2p}
\lesssim(kN)^{\f12-\f2p-\f2p\beta_p}N^{-\f12-\f2p-\f2p\beta_p}.
\end{align*}
While we observe that
\begin{align*}
\cF_k^3
\lesssim &\Bigl(\sum_{j=k+2}^\infty (jN)^{\f p2+2+2\beta_p}
\bigl\|r^{\f{p-5}p}\vt_j\bigr\|_{L^p_t(L^p)}^p\Bigr)^{\f1p}\\
&\times\Bigl(\sum_{j=k+2}^\infty \bigl((j-k)N\bigr)^{\f p2+2+2\beta_p}
\bigl\|r^{\f{p-5}p}\vt_{j-k}\bigr\|_{L^p_t(L^p)}^p\Bigr)^{\f1p}\\
&\times\Bigl(\sum_{j=k+2}^\infty(jN)^{\left(\f12-\f2p-\f2p\beta_p\right)\f{p}{p-2}}
\bigl((j-k)N\bigr)^{-\left(\f12+\f2p+\f2p\beta_p\right)\f{p}{p-2}}\Bigr)^{1-\f2p}\\
\lesssim&(kN)^{\f12-\f2p-\f2p\beta_p}N^{-\f12-\f2p-\f2p\beta_p}E_p^{\f2p}.
\end{align*}
As a result, we deduce  that for $k\geq3$
\begin{equation}\begin{split}\label{3.43}
\bigl\|r^{3-\f7p}\wt\nabla\cL_{N}^{-1}\cA_k^{(4,4)}\bigr\|_{L_t^{\f p2}(L^p)}
&\lesssim_\e N^{-\f1p+\f2p\al_p+\e}(k\pm1)N
\bigl\|r^{\f{p-5}p}\vt_{k\pm1}\bigr\|_{L^p_t(L^p)}E_p^{\f1p}\\
&\qquad+(kN)^{\f12-\f2p-\f2p\beta_p}N^{-\f1p-\f2p\beta_p+\e}E_p^{\f2p}.
\end{split}\end{equation}

By substituting the estimates \eqref{3.41}-\eqref{3.43} into \eqref{3.40}, we  obtain
\begin{align*}
N^{-2\al_p}\Psi_1^{(4,4)}+N^{2\beta_p}\Psi_2^{(4,4)}
\lesssim_\e& N^{\f12-\f3p+\f4p\al_p+\e}E_p^{\f2p}
 N^{-2+\f4p+\f2p\beta_p}\\
&\times\Bigl(\bigl(N^{-\al_p}\bigl\|r^{\f{p-3}2}|\wt u_1|^{\f p2}
\bigr\|_{L^\infty_t(L^2)}\bigr)^{\f2p}
\bigl(N^{1-\al_p}\bigl\|r^{\f{p-5}2}|\wt u_1|^{\f p2}
\bigr\|_{L^2_t(L^2)}\bigr)^{2-\f4p}\\
&+\bigl(N^{\beta_p}\bigl\|r^{\f{p-3}2}|\wt u_2|^{\f p2}
\bigr\|_{L^\infty_t(L^2)}\bigr)^{\f2p}
\bigl(N^{1+\beta_p}\bigl\|r^{\f{p-5}2}|\wt u_2|^{\f p2}
\bigr\|_{L^2_t(L^2)}\bigr)^{2-\f4p}\Bigr)\\
\lesssim&E_p^{1+\f2p},
\end{align*}
and
\begin{align*}
\sum_{k\geq3}(kN)^{2\beta_p} \Psi_k^{(4,4)}
\lesssim_\e&\sum_{k\geq3}(kN)^{-\f32+\f2p}
\Bigl\{N^{-\f1p-\f2p\beta_p+\e}E_p^{\f2p}\\
&+N^{-\f1p+\f2p\al_p+\e}
\Bigl(\bigl((k\pm1)N\bigr)^{\f p4+1+\beta_p}\bigl\|r^{\f{p-5}2}|\vt_{k\pm1}|^{\f p2}
\bigr\|_{L^p_t(L^p)}\Bigr)^{\f2p}E_p^{\f1p}\Bigr\}\\
&\times\Bigl((kN)^{\beta_p}
\bigl\|r^{\f{p-3}2}|\wt u_k|^{\f p2}\bigr\|_{L^\infty_t(L^2)}\Bigr)^{\f2p}
\Bigl((kN)^{1+\beta_p}\bigl\|r^{\f{p-5}2}|\wt u_k|^{\f p2}\bigr\|_{L^2_t(L^2)}
\Bigr)^{2-\f4p}\\
\lesssim_\e&N^{-\f1p+\f2p\al_p+\e}\Bigl\{
\Bigl(\sum_{k\geq3}(kN)^{(-\f32+\f2p)p}\Bigr)^{\f1p}
+\sup_{k\geq3}(kN)^{-\f32+\f2p}\Bigr\}E_p^{1+\f1p}\\
\lesssim&E_p^{1+\f1p}.
\end{align*}
This leads to \eqref{3.39} with $j=4$. \end{proof}

\medskip

\section*{Acknowledgments}
Y. Liu is supported by NSF of China under grant 12101053.
Ping Zhang is supported by National Key R$\&$D Program of China under grant
  2021YFA1000800 and K. C. Wong Education Foundation.
 He is also partially supported by National Natural Science Foundation of China under Grants  12288201 and 12031006.\medskip

\end{document}